\def\C{\ensuremath{\mathbb{C}}}
\def\H{\ensuremath{\mathbb{H}}}
\def\P{\ensuremath{\mathbb{P}}}
\def\Q{\ensuremath{\mathbb{Q}}}
\def\Z{\ensuremath{\mathbb{Z}}}
\def\cC{\ensuremath{\mathcal C}}
\def\cE{\ensuremath{\mathcal E}}
\def\cF{\ensuremath{\mathcal F}}
\def\cG{\ensuremath{\mathcal G}}
\def\cH{\ensuremath{\mathcal H}}
\def\cK{\ensuremath{\mathcal K}}
\def\cL{\ensuremath{\mathcal L}}
\def\cM{\ensuremath{\mathcal M}}
\def\cO{\ensuremath{\mathcal O}}
\def\cP{\ensuremath{\mathcal P}}
\def\cX{\ensuremath{\mathcal X}}
\def\cZ{\ensuremath{\mathcal Z}}
\def\vv{\ensuremath{\mathbf v}}
\def\fM{\mathfrak M}
\def\fg{\mathfrak g}
\def\phi{{\varphi}}
\DeclareMathOperator{\ch}{ch}
\DeclareMathOperator{\Db}{D\textsuperscript{\rm b}}
\DeclareMathOperator{\Ext}{Ext}
\DeclareMathOperator{\Hom}{Hom}
\DeclareMathOperator{\Ker}{Ker}
\DeclareMathOperator{\NS}{NS}
\DeclareMathOperator{\Pic}{Pic}
\DeclareMathOperator{\rk}{rk}
\DeclareMathOperator{\Supp}{Supp}
\DeclareMathOperator{\td}{td}
\DeclareMathOperator{\bwed}{\bigwedge\nolimits}
\def\Coh{\mathop{\mathrm{Coh}}\nolimits}
\def\ext{\mathop{\mathrm{ext}}\nolimits}
\def\RHom{\mathop{\mathrm{RHom}}\nolimits}
\def\id{\mathop{\mathrm{id}}\nolimits}
\def\Ker{\mathop{\mathrm{Ker}}\nolimits}
\def\NS{\mathop{\mathrm{NS}}\nolimits}
\def\rk{\mathop{\mathrm{rk}}}
\def\Tr{\mathop{\mathrm{Tr}}\nolimits}
\def\SH{\mathrm{SH}}
\def\Sym{\mathrm{Sym}}
\def\At{\mathrm{At}}
\def\isom{\simeq}
\newcommand{\HK}{hyper-K\"ahler }
\newcommand{\wt}{\widetilde}
\newcommand{\RcHom}{\mathrm{R}\mathcal{H}\!{\it om}}
\def\sfq{\ensuremath{\mathsf q}}
\newcommand{\mor}[1][]{\xrightarrow{#1}}
\newtheorem{Thm}{Theorem}[section]
\newtheorem{Prop}[Thm]{Proposition}
\newtheorem{Lem}[Thm]{Lemma}
\newtheorem{Cor}[Thm]{Corollary}
\newtheorem*{Con*}{Conjecture}
\newtheorem*{Ques*}{Question}
\theoremstyle{definition}
\newtheorem{Defi}[Thm]{Definition}
\newtheorem{Rem}[Thm]{Remark}
\newtheorem{Ex}[Thm]{Example}
\def\@tocline#1#2#3#4#5#6#7{\relax
  \ifnum #1>\c@tocdepth % then omit
  \else
    \par \addpenalty\@secpenalty\addvspace{#2}%
    \begingroup \hyphenpenalty\@M
    \@ifempty{#4}{%
      \@tempdima\csname r@tocindent\number#1\endcsname\relax
    }{%
      \@tempdima#4\relax
    }%
    \parindent\z@ \leftskip#3\relax \advance\leftskip\@tempdima\relax
    \rightskip\@pnumwidth plus4em \parfillskip-\@pnumwidth
    #5\leavevmode\hskip-\@tempdima
      \ifcase #1
       \or\or \hskip 1em \or \hskip 2em \else \hskip 3em \fi%
      #6\nobreak\relax
    \dotfill\hbox to\@pnumwidth{\@tocpagenum{#7}}\par
    \nobreak
    \endgroup
  \fi}
\setlist[itemize]{noitemsep,nolistsep}
\setlist[enumerate]{noitemsep,nolistsep}
\begin{document}

\definecolor{xdxdff}{rgb}{0.49019607843137253,0.49019607843137253,1}
\definecolor{uuuuuu}{rgb}{0.26666666666666666,0.26666666666666666,0.26666666666666666}
\definecolor{ffqqqq}{rgb}{1,0,0}

\title{Towards a modular construction of OG10}

\author[Alessio Bottini]{Alessio Bottini}

\address{Dipartimento di Matematica, Universit\`{a} di Roma Tor Vergata, Via della Ricerca Scientifica 1, 00133, Roma, Italia}
\address{Universit\'e Paris-Saclay,
CNRS, Laboratoire de Math\'ematiques d'Orsay,
Rue Michel Magat, B\^at. 307, 91405 Orsay, France}
\curraddr{Mathematisches Institut, Universität Bonn, Endenicher Allee 60, 53115 Bonn, Germany}
\email{bottini@math.uni-bonn.de}

%MathSubjClass2020
\makeatletter
\@namedef{subjclassname@2020}{%
  \textup{2020} Mathematics Subject Classification}
\makeatother
\keywords{Hyper-K\"ahler manifolds, stable vector bundles, moduli spaces.}
\subjclass[2020]{14J42, 14F08, 14J60, 53D30.}

\begin{abstract}
We construct the first example of a stable hyperholomorphic vector bundle of rank five on every hyper-K\"ahler manifold of $\mathrm{K3}^{[2]}$-type whose deformation space is smooth of dimension ten. 
Its moduli space is birational to a hyper-K\"ahler manifold of type OG10.
This provides evidence for the expectation that moduli spaces of sheaves on a hyper-K\"ahler could lead to new examples of hyper-K\"ahler manifolds.
\end{abstract}
%We give a new example of a stable atomic (in particular hyperholomorphic) vector bundle of rank five on every hyper-K\"ahler manifold of $\mathrm{K3}^{[2]}$-type. We show that its deformation space is smooth of dimension ten. We study the moduli space of its semistable deformations and show that its birational to a hyper-K\"ahler manifold of type OG10. There is a natural closed 2-form on the smooth locus; we show that it is preserved by the birational map.

\maketitle
\setcounter{tocdepth}{1}
\tableofcontents

%%%%%%%%%%%%%%%%%%%%%%%%%%%%%%%%%%%%%

\section{Introduction}
\subsection{Background and motivation}
Hyper-K\"ahler manifolds are a well-studied class of K\"ahler manifolds. Interest in them originates Beauville--Bogomolov decomposition Theorem which shows that they are building blocks for K\"ahler manifolds with torsion first Chern class. Despite having a well-developed general theory, culminating in Verbitsky's Global Torelli Theorem \cite{verbitsky13}, few examples are known. Moreover, all known examples arise, up to deformation, from smooth moduli spaces of sheaves on K3 (or Abelian) surfaces \cite{ogrady97, Yoshioka_main}, or desingularizations of singular moduli spaces \cite{ogrady99, ogrady03}. 

In a quest to generalize to higher dimensions the properties of vector bundles on K3 surfaces, which allow for the rich geometry of their moduli spaces, O'Grady \cite{ogrady22} introduced the notion of modular sheaves.
A torsion-free sheaf $F$ on a \HK manifold $X$ is called modular if its discriminant satisfies a certain numerical condition, see Definition \ref{def:modular}. 

This property is satisfied, for example, if $\Delta(F)$ remains of type $(2,2)$ along all deformations of $X$. 
In this case, a celebrated result due to Verbitsky \cite[Theorem $3.19$]{verbitsky99} for locally free sheaves, and later generalized by Markman \cite[Corollary $6.12$]{markman20} to reflexive sheaves, says that if $F$ is also slope-stable, then it deforms along any K\"ahler deformation of $X$. 

As described in \cite{ogrady21} and \cite{ogrady22}, the underlying motivation for the notion of modularity is to try to extend to higher dimensions the proof that moduli spaces of sheaves on K3 surfaces are hyper-K\"ahler manifolds. 
Specifically, the key step is to deform the pair $(X,F)$ to a \HK with a Lagrangian fibration and study the deformed sheaf by restriction to the fibers. 

The examples considered in \cite{ogrady22} are rigid, that is, without infinitesimal deformations. 
O'Grady proved an existence and uniqueness result for stable modular sheaves with certain invariants, which in turn implies a birationality result for the period map for Debarre-Voisin varieties. 

In an effort to construct more examples, Markman \cite{markman21} studied sheaves with obstruction map of rank one.  
The obstruction map for an object $E \in \Db(X)$ is the map
\begin{equation}\label{eq:obstructionMapDef}
    \chi_E : HH^2(X) \to \Ext^2(E,E), \quad \eta \mapsto \eta_E,
\end{equation}
given by evaluation at $E$. 
Here we used that an element $\eta \in HH^2(X)$ in the second Hochschild cohomology group of a smooth projective variety $X$ can be seen as a natural transformation $\id \xrightarrow{\eta} [2]$.   

Similarly, there is a \emph{cohomological} obstruction map
\[
\chi^{\text{coh}}_E : HH^2(X) \to H^*(X,\C),
\]
given by contraction with the Chern character of $E$, see \cite[Definition 6.11]{markman21}. 
Objects with cohomological obstruction map of rank one have been studied independently by Beckmann in \cite{beckmann22}, where they are called \emph{atomic}. 

In \cite[Theorem 1.2]{markman21} it is shown that if $E$ is a torsion-free atomic sheaf on a \HK manifold $X$, then it is modular in the sense above. 
The rank of the (cohomological) obstruction map is invariant under derived equivalences, so this point of view naturally yields an approach to find new modular sheaves: mapping atomic objects to torsion-free sheaves via derived equivalences. 
The most promising class of atomic objects \cite[Theorem 1.8]{beckmann22} consists of line bundles supported on smooth Lagrangians $Z \subset X$, with the property that the restriction map $H^2(X,\C) \to H^2(Z,\C)$ has rank one. 

Besides being modular, atomic sheaves enjoy a crucial extra property.
Namely, on the set of atomic objects one can define an ``extended Mukai vector''.
It lives in the ``extended Mukai lattice'', first introduced in the breakthrough work by Taelman \cite{taelman21}. 
This is the rational vector space
\[
\widetilde{H}(X,\Q) \coloneqq \Q\alpha \oplus H^2(X,\Q) \oplus \Q\beta,
\]
equipped with the quadratic form $\tilde{q}$ obtained by extending the BBF form on $H^2(X,\Q)$ by declaring that $\alpha$ and $\beta$ are orthogonal to $H^2(X,\Q)$, isotropic and $\tilde{q}(\alpha,\beta) = -1$.

The geometric meaning of the classes $\alpha$ and $\beta$ can be understood by the short exact sequence of \cite[Lemma $3.7$]{taelman21}.
Namely, if $\dim(X) = 2n$, we have
\begin{equation}\label{eq:sequenceTaelman}
    0 \to \SH(X) \to \Sym^n \widetilde{H}(X,Q) \to \Sym^{n-2}\widetilde{H}(X,\Q) \to 0,
\end{equation}
where $\SH(X)$ is the Verbitsky component, i.e. the subalgebra of $H^*(X,\Q)$ generated by $H^2(X,\Q)$.
The images of $\alpha^{i}\beta^{n-i}$ under the orthogonal projection $\Sym^{n}\widetilde{H}(X,Q) \to \SH(X)$ generate the monodromy invariant part. 

Taelman \cite[Theorems 2.4, 4.8, 4.9]{taelman21} showed that an equivalence $\Phi : \Db(X) \mor[\sim] \Db(Y)$ induces Hodge isometries $\Phi^{\SH} : \SH(X) \to \SH(Y)$ and $\Phi^{\widetilde{H}} : \widetilde{H}(X,\Q) \to \widetilde{H}(Y,\Q)$.
These two isometries are compatible via the sequence above up to sign, i.e. the diagram 
% https://q.uiver.app/?q=WzAsNCxbMCwwLCJcXFNIKFgpIl0sWzEsMCwiXFxTSChZKSJdLFswLDEsIlxcU3ltXm5cXHRpbGRle0h9KFgsXFxRKSJdLFsxLDEsIlxcU3ltXm5cXHRpbGRle0h9KFksXFxRKSJdLFswLDJdLFsxLDNdLFswLDEsIlxcUGhpXkgiXSxbMiwzLCJcXFN5bV5uXFxQaGlee1xcdGlsZGV7SH19Il1d
\[\begin{tikzcd}
	{\SH(X)} & {\SH(Y)} \\
	{\Sym^n\widetilde{H}(X,\Q)} & {\Sym^n\widetilde{H}(Y,\Q)}
	\arrow[from=1-1, to=2-1]
	\arrow[from=1-2, to=2-2]
	\arrow["{\Phi^{\SH}}", from=1-1, to=1-2]
	\arrow["{\Sym^n\Phi^{\widetilde{H}}}", from=2-1, to=2-2]
\end{tikzcd}\]
commutes up to a sign. 

Building up on this work, Beckmann \cite{beckmann21, beckmann22} and Markman \cite{markman21} introduced the extended Mukai vector $\tilde{v}(E) \in \tilde{H}(X,\Q)$ for any atomic object $E \in \Db(X)$. 
It is only defined up to a constant, by requiring the symmetric power $\tilde{v}^{(n)}$ to be compatible with the usual Mukai vector $v(E) \coloneqq \ch(E)\sqrt{\td_X}$ via the maps in the sequence \eqref{eq:sequenceTaelman}, see Definition \ref{def:defiMukaiVector}. 

The extended Mukai vector inherits some of the properties of the Mukai vector, while at the same time being valued in a smaller, more manageable, vector space than the whole rational cohomology.
Among those, one of the more useful is compatibility with derived equivalences. 
More precisely, if $E \in \Db(X)$ is an atomic object, then 
\[
\Phi^{\widetilde{H}}(\langle \tilde{v}(E) \rangle) = \langle \tilde{v}(\Phi(E)) \rangle,
\]
where $\langle \tilde{v} \rangle$ denotes the line spanned by $\tilde{v}.$

%The Mukai lattice is much more tractable than the entire cohomology of $X$, so this fact is extremely useful to compute Mukai vectors of objects in the derived category of a \HK manifold. 

The investigation of sheaves on \HK manifolds and their moduli spaces is one of the most promising paths to find new examples of \HK manifolds. 
This idea has been around since the works of Kobayashi \cite{kobayashi86} and Verbitsky \cite{verbitsky99}, but the theory is still in its infancy. 
The first step would be to answer the following question posed by Markman. 

\medskip

\noindent
\textbf{Question.} Can we realize OG10 as a moduli space of sheaves on a \HK manifold of type $\mathrm{K3}^{[2]}$?

\medskip

Atomic sheaves have beautiful properties, which make them excellent candidates to have reasonable moduli spaces. 
In this paper, we make progress towards the answer: we find a stable atomic vector bundle on a $\mathrm{K3}^{[2]}$ whose moduli space has an irreducible component birational to OG10.

\subsection{Main results}
The first result of this paper is the construction of a new example of a non-rigid atomic vector bundle. 
Denote by $\sfq_{2i} \in \SH^{4i}(X)$ the classes defined in Definition \ref{def:MonodromyInvariantClasses}, and by $\mathfrak{pt} \in H^{\mathrm{top}}(X,\Q)$ the class of a point. 
Recall that if $X$ is of type $\mathrm{K3}^{[2]}$, then 
\[
c_2(X) = 30 \sfq_2
\]
by \cite[Proposition 2.4]{beckmann_song22}.
We also remand to Section \ref{sec:ProofOfStability} for the notion of $a(\vv)$-generic polarization. 

\begin{Thm}\label{thm:mainTheorem}
Let $X$ be a projective \HK of $\mathrm{K3}^{[2]}$-type.
Consider the Mukai vector
\[
\vv \coloneqq 5\left(1-\frac{3}{4}\sfq_2 + \frac{9}{32}\mathfrak{pt}\right) \in H^*(X,\Q),
\]
and let $h$ be any $a(\vv)$-generic polarization. 
Then, there exists an $h$-stable vector bundle $F_0$ on $X$ with Mukai vector $\vv$. 
Moreover, the group $\Ext^1(F_0,F_0)$ is ten dimensional, the Yoneda pairing is skew-symmetric and induces an isomorphism
\[
\bwed^2 \Ext^1(F_0,F_0) \mor[\sim] \Ext^2(F_0,F_0).
\]
In particular, its deformation functor is smooth.  
\end{Thm}

\noindent
We briefly describe the steps involved in the construction.
\begin{enumerate}
    \item If $X \subset \P^5$ is a general cubic fourfold and $H$ is a general hyperplane, then the structure sheaf $\cO_{F(X \cap H)}$ is an atomic object in $\Db(F(X))$. 
    We degenerate the cubic to the determinantal cubic and consider the corresponding degeneration of the Fano variety of lines. 
    After a resolution, the central fiber is a moduli space $M$ of torsion sheaves on a general K3 surface of degree two, and the surface $F(X \cap H)$ degenerates to a reducible Lagrangian $Z$ with two components. 
    \item The moduli space $M$ is endowed with a Lagrangian fibration $\pi : M \to \P^2$. 
    This Lagrangian fibration has a section, whose image $L$ is one of the components of the reducible Lagrangian $Z$. 
    The other component is a Lagrangian plane $P' \subset M$. 
    As shown in \cite{addington16}, there is an autoequivalence $\Phi$ of $M$ mapping a general point to a line bundle supported on its fiber. 
    We make the following construction: starting from a line bundle $\cL \in \Pic^0(L)$, we glue it with $\cO_{P'}$, to obtain a degree zero line bundle $\overline{\cL}$ on $Z$. 
    The image $\Phi(\overline{\cL})$ is a locally free sheaf, but not slope-stable. 
    \item To make it stable we apply a second autoequivalence: the composition of two (inverses of) $\P$-twists around line bundles.  
    After twisting by a line bundle, the resulting vector bundle will have $c_1 = 0$. 
    Using atomicity we can easily compute the Mukai vector from this construction. 
    Slope-stability, combined with atomicity, allows the bundle to deform to every K\"ahler deformation of $M$ thanks to \cite[Theorem 1.2]{markman21}.
    The Yoneda pairing is studied on the Lagrangian side, by relating it to the cup product on the cohomology of $L$.
\end{enumerate}

Along the way we prove a number of interesting results on their own. We highlight especially the following.

\begin{Prop}[Proposition \ref{prop:locallyFreeness}]
Let $M= M_S(0,H,1-g)$ be a moduli space of torsion sheaves on a general polarized K3 surface $(S,H)$ of genus $g$, and let $\pi: M \to \P^g$ be the Lagrangian fibration. Let $L \subset M$ be a Cohen--Macaulay subvariety such that $\pi|_L : L \to \P^g$ is finite. If $V_L$ is a vector bundle on $L$, then $\Phi(V_L)$ is a locally free sheaf. 
\end{Prop}

The proof is based on an analysis done by Arinkin in \cite{arinkin13} on the singularities of the Fourier-Mukai kernel of $\Phi$. 
This is the first technique to produce \emph{locally free} sheaves starting from push-forwards of locally free sheaves on subvarieties.  
We believe this could be helpful in understanding the relationship between atomic vector bundles and atomic Lagrangians.

The rest of the paper is devoted to the study of the irreducible component of the moduli space $\fM$ of Gieseker semi-stable sheaves on $M$ containing $F_0$. 
While Theorem \ref{thm:mainTheorem} is a general existence result, we study the geometry of $\fM$ only in a particular case.
Namely, $F_0$ is the vector bundle on $M$ obtained from the construction outlined above, and $h$ is a suitable polarization (see Section \ref{sec:ProofOfStability} for a reminder on this notion).
In this context, we are able to prove the following. 

\begin{Thm}[{{Proposition \ref{prop:birationalMap} and Theorem \ref{thm:symplecticForm}}}]\label{thm:introBirational}
The smooth locus $\fM_{\mathrm{sm}}$ is equipped with a closed holomorphic 2-form. Moreover,
there is a birational map preserving the 2-form
\[
X \dashrightarrow \fM,
\]
where $X$ is a \HK manifold of type OG10. 
\end{Thm}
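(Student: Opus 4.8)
The plan is to prove the two halves of the statement in tandem, using the birational model to control the two form: I would first establish the birational map of Proposition~\ref{prop:birationalMap}, then construct the closed holomorphic two form of Theorem~\ref{thm:symplecticForm}, and finally check that the birational map matches it with the symplectic form of OG10. The point is that closedness and generic non-degeneracy are cheapest to verify by transporting them from the OG10 side.

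\textbf{The birational map.} By construction $F_0 = \Psi(\cO_Z)$, where $\Psi$ is the composition of autoequivalences of $\Db(M)$ from the three steps of the introduction (the Fourier--Mukai transform $\Phi$ of Proposition~\ref{prop:locallyFreeness}, the two $\P$-twists, and a line-bundle twist). Since an autoequivalence sends the component of the moduli of objects through $\cO_Z$ isomorphically onto the one through $F_0$, the space $\fM$ is birational to the moduli of deformations of $\cO_Z$ in $\Db(M)$. For the smooth Fano surface $Y := F(X\cap H)$, of which $Z$ is a flat degeneration, the Lagrangian identification $N_{Y/M}\cong \Omega^1_Y$ yields
\[
\Ext^1_M(\cO_Y,\cO_Y) \cong H^0(Y,\Omega^1_Y) \oplus H^1(Y,\cO_Y),
\]
splitting the ten deformations into the five that move the hyperplane $H$ and the five that twist by a degree-zero line bundle. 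Globally these data are exactly a point of the intermediate Jacobian fibration $\mathcal{J}\to (\P^5)^\vee$ attached to the cubic fourfold, whose Laza--Sacc\`a--Voisin compactification is a \HK manifold of type OG10. I would therefore build the map $X \dra \fM$ by applying a relative version of $\Psi$ to the universal pair (Lagrangian, Poincar\'e line bundle) over $\mathcal{J}$, obtaining a family of sheaves on $M$ whose classifying map to $\fM$ I then show is birational.

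\textbf{The two form and compatibility.} On $\fM_{\mathrm{sm}}$ the tangent space at $[F]$ is $\Ext^1_M(F,F)$, and I would define the two form as the Yoneda pairing followed by a trace-type functional,
\[
\Ext^1_M(F,F) \otimes \Ext^1_M(F,F) \to \Ext^2_M(F,F) \to \C,
\]
the last arrow landing in $H^2(M,\cO_M)\cong\C$ via the holomorphic symplectic structure of $M$ and the atomicity of $F$. At $[F_0]$ non-degeneracy is immediate from Theorem~\ref{thm:mainTheorem}, since there the Yoneda pairing already identifies $\bwed^2\Ext^1_M(F_0,F_0)$ with $\Ext^2_M(F_0,F_0)$. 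Generic non-degeneracy and closedness I would then read off the birational model: the compatibility of derived equivalences with the extended Mukai lattice \cite{taelman21} shows that $\Psi$ intertwines this Yoneda form with the Mukai-type form on the moduli of $\cO_Z$, which under the identification with $\mathcal{J}$ is the restriction of the holomorphic symplectic form of OG10. As that form is closed and symplectic, the same follows for the Yoneda form on a dense open set, and hence on all of $\fM_{\mathrm{sm}}$ since $d(\text{form})$ is a section of a vector bundle vanishing on a dense open; the birational map preserves the forms by construction.

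\textbf{Main obstacle.} The hard part will be the global comparison in the birational step: $Z$ is reducible and occurs only as a degeneration, so the clean infinitesimal identification above must be promoted to a genuine birational equivalence, which requires controlling the moduli of $\cO_Z$ near the degenerate locus and matching it, through the relative autoequivalence $\Psi$, with the Laza--Sacc\`a--Voisin family over $(\P^5)^\vee$. If this comparison proves too delicate to make form-preserving directly, the fallback bottleneck is to establish closedness of the Yoneda form intrinsically, e.g.\ by a Kaledin--Lehn type argument on $\fM_{\mathrm{sm}}$, and to deduce the birational equivalence of forms only up to a scalar.
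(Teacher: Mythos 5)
There is a genuine gap, and it sits in the construction of the birational map. You take $X$ to be the Laza--Sacc\`a--Voisin intermediate Jacobian fibration over $(\P^5)^{\vee}$ and propose to obtain $X \dra \fM$ by applying ``a relative version of $\Psi$'' to a universal family over it. But $\Psi$ (the composition of $\Phi$ with the two inverse $\P$-twists) is an autoequivalence of $\Db(M)$ only, where $M$ is the \emph{central} fiber of the degeneration: the Poincar\'e kernel lives on $M \times_{\P^2} M$ and does not extend over the family of cubics. The surfaces $F(\cX_t \cap H)$ entering your infinitesimal computation sit inside the \emph{general} fibers $F(\cX_t)$, which are different manifolds, and no equivalence relating $\Db(F(\cX_t))$ to $\Db(M)$ is available. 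More fundamentally, $\fM$ parameterizes sheaves on the fixed fourfold $M$, so deforming $F_0$ does not move the cubic or the hyperplane; the ten deformation directions are: the curve $C$ moving in the $5$-dimensional linear system $|2H|$ on the degree-two K3 $S$, plus the $5$-dimensional Jacobian of $C$. Accordingly the paper's OG10 model is $\widetilde{M}_S(0,2H,-4)$ (O'Grady's resolution), not the LSV fibration, and the map is completely explicit: for $L_C \in \Pic^0(C)$ the symmetric square $L_C^{(2)}$ on $L \cong \Sym^2 C$ restricts trivially to the rational curve $K^*$, hence glues with $\cO_{P'}$ to a line bundle $\overline{\cL}_C$ on $Z$, and $\psi(L_C) := \Psi(\overline{\cL}_C)$; injectivity follows by restricting $L_C^{(2)}$ to the diagonal (Proposition \ref{prop:birationalMap}). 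Your count of $5+5$ directions is morally right but attached to the wrong fibration, and the comparison with the LSV family that you flag as the ``main obstacle'' is indeed not fixable along the lines you sketch.

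On the two form, closedness is not the bottleneck (it is Kuznetsov--Markushevich, \cite[Theorem 2.2]{kuznetsovMark09}, applied to $\alpha_{\eta}(a,b) = \Tr_F(\chi_F(\eta)\circ a \circ b)$ --- no Kaledin--Lehn argument is needed), but your non-degeneracy claim at $[F_0]$ is too quick. The isomorphism $\bwed^2\Ext^1(F_0,F_0) \isomor \Ext^2(F_0,F_0)$ of Theorem \ref{thm:mainTheorem} only says Yoneda products span $\Ext^2$; whether $\alpha_{\eta}$ is non-degenerate depends on the particular linear functional $\Tr_F(\chi_F(\eta)\circ -)$ on $\Ext^2$, which could a priori induce a degenerate element of $\bwed^2\Ext^1(F_0,F_0)^{*}$. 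This is exactly what the paper's obstruction-map analysis supplies: sheaves in the image of $\psi$ are $1$-obstructed (Corollary \ref{cor:obstructionImage}), which pins down $\alpha_{\eta}$ up to scalar, and Proposition \ref{prop:obstructionMapCupProduct} identifies the functional with the Poincar\'e dual of the cup product, so that under $\Ext^1(F,F) \cong H^1(C,\C)$ (Lemma \ref{lem:yonedaProduct}) the form $\alpha_{\eta}$ becomes the cup-product symplectic form --- which is the OG10 form at $[L_C]$, giving form-preservation. Your idea of transporting the trace pairing through $\Psi$ via the compatibility of equivalences with Mukai-type pairings is sound as far as comparing the two sides of the equivalence goes, but it cannot replace the computation identifying the form on the $\cO_Z$-side with the OG10 form, and with the LSV model that identification is unavailable.
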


The birational map is easily described. 
Recall that $M$ is a moduli space of sheaves on a general polarized K3 surface $(S,H)$ of degree two. 
The reducible Lagrangian $Z \subset M$ has two components. 
One is the image $P'$ of a section of the Lagrangian fibration $\pi$. 
The other is a Lagrangian surface $L$ isomorphic to $\Sym^2 C$, where $C \subset S$ is a general curve in $|2H|$.

A degree zero line bundle $L_C$ supported on a general curve in $|2H|$ is a general element in the moduli space $M_S(0,2H,-4)$. 
The variety $X$ is the symplectic resolution of this moduli space, and the birational map is given by the steps $(1) - (3)$ applied to the the symmetric square $L_C^{\boxtimes^2}$.

The 2-form at the point $[F] \in \fM_{\mathrm{sm}}$ is given by
\[
\Ext^1(F,F) \times \Ext^1(F,F) \to \C, \quad (a,b) \mapsto \Tr_F\left(\chi_F(\eta) \circ a \circ b\right),
\]
where $\eta \in HH^2(M)$. 
A priori it depends on the choice of $\eta$, but, at least on the image of the birational map, it is unique up to a constant. 
%This is because, as shown in Corollary \ref{cor:obstructionImage}, every sheaf in the image of the birational map above has obstruction map of rank one. 

We conjecture that $\fM$ is itself a \HK manifold of type OG10, in particular that it is smooth. 

A possible way to address this is to analyze the singularities of the sheaves in $\fM$: are they all locally free? Are they all reflexive?
We believe that a positive answer to these questions could lead to an understanding of the singularities of $\fM$. 

\subsection{Structure of the paper}
In Section \ref{sec:Degeneration} we review the works \cite{collino82} and \cite{vandenDries12} to show that there is a smooth family $\cF \to \Delta$ of \HK varieties realizing the degeneration above. 

In Section \ref{sec:NumericalComputations} we review some of the background on atomic sheaves. We also prove new numerical results. First, we compute explicitly the discriminant of an atomic sheaf, reproving that an atomic sheaf is modular.
We use this computation to speculate on Bogomolov's inequality for atomic sheaves.
Then, in Theorem \ref{thm:eulerCharacteristic} we give a formula for the Euler pairing of an atomic object with itself, generalizing the well known formula for K3 surfaces. 

In Section \ref{sec:ConstructionSheaf} we apply construct the bundle of Theorem \ref{thm:mainTheorem}. We show that the the image of a line bundle supported on $Z$ is a locally free sheaf, we compute its Mukai vector and its Ext groups. 

In Section \ref{sec:extGroups} we develop some technical algebraic results which we will need to compute the $\Ext$ groups of the sheaf $F$, and to perform the semistable reduction. The main result of this section is Proposition \ref{prop:ExtExactSequence} where the groups $\Ext^*(\cO_Z,\cO_Z)$ are described in terms of the topology of $Z$. 
%We also study the image under the $\P$-twist $P_{\cE}$ of an object $\cF$, where the pair $(\cE,\cF)$ satisfies the condition $\Ext^*(\cE,\cF) = \C[-1] \oplus \C[-3]$.  

In Section \ref{sec:ssreduction} we perform two inverse $\P$-twists and prove that the resulting object is a stable vector bundle. Locally freeness is proved in Proposition \ref{prop:locallyFreeness}. The key ingredient of the proof of stability is the notion of a suitable polarization introduced in \cite{ogrady21}, which allows to relate stability on the general fiber to global stability. The main use of this is in Proposition \ref{prop:determinant}.

Finally, in Section \ref{sec:moduliSpace} we study the moduli space $\mathfrak{M}$ of Gieseker semistable deformations of $F_0$ on $M$. 
We study the image of the obstruction map in Theorem \ref{thm:symplecticForm}, and prove Theorem \ref{thm:introBirational}.

\noindent
\textbf{Notation and conventions}
Unless otherwise specified, all the functors are derived. 
Where it does not generate confusion, we use the same notation $\cL$ for a line bundle supported on a subvariety and for its pushforward. 
We use O'Grady's normalization for the Fujiki constant: if $X$ is a \HK manifold of dimension $2n$ and $\alpha \in H^2(X)$, then
\[
\int_X{\alpha^{2n}} = c_X \cdot (2n-1)!!\cdot q_X(\alpha)^n.
\] 
where $q_X$ denotes the Beauville-Bogomolov-Fujiki form.

\section{Degenerating the Fano variety of lines}\label{sec:Degeneration}
Let $X_0 \subset \P^5$ be the determinantal cubic, that is the secant variety to the Veronese surface $V \subset \P^5$. It is given in coordinates by
\begin{equation}
    \begin{vmatrix}
x_0 & x_1 & x_2 \\ 
x_2 & x_3 & x_4  \\ 
x_2 & x_4 & x_5   \notag
\end{vmatrix} = 0.
\end{equation}
It is singular along the Veronese surface. If $\P^5$ is identified with the space of conics on a projective plane, $X_0$ corresponds to the singular conics and $V$ to the non-reduced ones. 

Let $X \subset \P^5$ be a very general cubic and let $\cX \to \Delta$ be the pencil spanned by $X_0$ and $X$. If $X = \{ f = 0 \}$, the equation of the pencil is
\begin{equation}
    \begin{vmatrix}
x_0 & x_1 & x_2 \\ 
x_2 & x_3 & x_4  \\ 
x_2 & x_4 & x_5   \notag
\end{vmatrix} + tf = 0.
\end{equation}

Taking the relative Fano variety of lines, we get a family $\cF \to \Delta$ whose general fiber $\cF_t$ is the Fano variety of lines $F(\cX_t)$ of a general member of the pencil. The central fiber $\cF_0 = F(X_0)$ is described in \cite[Propositions $3.2.3$ and $3.2.4$]{vandenDries12}: it is the union of $F_1 \cong (\P^2)^{[2]}$ and $F_2 \cong \P^2 \times (\P^2)^{\vee}$, where $F_1$ is non-reduced with multiplicity four.

\begin{Prop}[{{\cite[Theorem $3.3.7$]{vandenDries12}}}]\label{prop:bigBlowUp}
After a base change along a $2:1$ map $\Delta' \to \Delta$ and blowing up $\cF$ in $F_1$, we get a family $\widehat{\cF} \to \Delta'$ such that
\begin{enumerate}
    \item The special fiber has two irreducible components
    \[
    \widehat{\cF}_0 = E \cup \widehat{F_2}.
    \]
    \item The map $\widehat{F_2} \to F_2$ is an isomorphism, in particular $\widehat{F_2} \cong \P^2 \times (\P^2)^{\vee}$. 
    \item The intersection $E \cap \widehat{F_2} \subset \widehat{F_2}$ is isomorphic to the incidence variety in $\P^2 \times (\P^2)^{\vee}$.
    \item The blow up $\widehat{\cF}$ is smooth along $\widehat{F_2}$.
\end{enumerate}
\end{Prop}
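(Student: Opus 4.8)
The plan is to follow van den Dries's analysis of the central fiber $\cF_0 = F_1 \cup F_2$ and resolve the non-reduced structure on $F_1$ by a combination of base change and blow-up. First I would recall the local description: $\cF_0$ has two components, with $F_1 \cong (\P^2)^{[2]}$ appearing with multiplicity four, and $F_2 \cong \P^2 \times (\P^2)^{\vee}$ reduced. The multiplicity-four structure along $F_1$ is the main source of the non-reducedness, and the purpose of the $2:1$ base change $\Delta' \to \Delta$ is to create a family whose total space acquires $A_1$-type (or more generally normal crossing) behavior that can be separated by a single blow-up. I would set up \'etale-local coordinates on $\cF$ near a general point of $F_1$ in which the equation of the family has the form $t = u^k$ (with $k$ reflecting the multiplicity), so that pulling back along $t = (t')^2$ and normalizing has the effect of reducing the multiplicity.

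Next I would perform the blow-up of $\cF$ (after base change) along $F_1$ and identify the exceptional locus. The key computation is to show that the strict transform of $F_2$, call it $\widehat{F_2}$, meets the exceptional divisor $E$ transversally and that $\widehat{\cF}_0 = E \cup \widehat{F_2}$ is reduced with two components, establishing (1). For (2), since the blow-up center $F_1$ meets $F_2$ only along the incidence locus and $F_2$ is already reduced, the strict transform map $\widehat{F_2} \to F_2$ is an isomorphism away from that locus, and a local coordinate check along the incidence variety shows it extends to a global isomorphism, giving $\widehat{F_2} \cong \P^2 \times (\P^2)^{\vee}$. For (3), I would compute $E \cap \widehat{F_2}$ as the projectivized normal cone restricted to $\widehat{F_2}$; this should recover precisely the incidence variety $\{([\ell],[H]) : \ell \in H\} \subset \P^2 \times (\P^2)^{\vee}$, matching the geometric expectation that lines in $X_0$ degenerate according to their incidence with the Veronese data.

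Finally, for smoothness of $\widehat{\cF}$ along $\widehat{F_2}$ in (4), I would check the Jacobian criterion in the local coordinates produced above: after base change the singularities of the total space along $F_2$ are resolved by the blow-up, and a direct rank computation of the differential of the blow-up equations shows $\widehat{\cF}$ is smooth at every point of $\widehat{F_2}$ (including along the intersection with $E$).

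\emph{Main obstacle.} The hardest part will be the explicit local analysis of the total space $\cF$ near the non-reduced component $F_1$ and, in particular, keeping track of how the multiplicity-four structure interacts with the $2:1$ base change. One must verify that a single blow-up in $F_1$ suffices to separate the components and simultaneously smooths the total space along $\widehat{F_2}$ --- this requires a careful model of the local equations (which the determinantal structure of $X_0$ makes tractable but delicate) rather than a formal argument, and is where I would lean most heavily on the computations of \cite{vandenDries12}.
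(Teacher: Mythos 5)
The paper itself offers no proof of this proposition: it is imported verbatim from van den Dries's thesis, as the citation \cite[Theorem 3.3.7]{vandenDries12} in the header indicates. So the only meaningful comparison is with the argument in that reference, whose overall strategy (explicit local equations for the relative Fano scheme, base change, blow-up of $F_1$, coordinate checks along $\widehat{F_2}$) your outline does broadly mirror, and you candidly acknowledge that the substance would be drawn from the thesis. As a citation-replacement your text is therefore a plan rather than a proof; that by itself would be acceptable, but the one concrete normal form you commit to is actually inconsistent with the statement.

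The gap is this: you propose the local model $t = u^{k}$ (with $k=4$ reflecting the multiplicity) near a general point of $F_1$, which presupposes that the total space $\cF$ is smooth there with $F_1$ a Cartier divisor. Under that assumption the construction fails to produce the claimed picture. Pulling back along $t = (t')^{2}$ gives $t'^{2} = u^{4} = (t'-u^{2})(t'+u^{2})$, a reducible total space whose central fiber still has multiplicity four along $F_1$; blowing up $\{t' = u = 0\}$ (chart $t' = us$) yields $u^{2}(s^{2}-u^{2})=0$, i.e.\ two sheets $s = \pm u$ on each of which the central fiber $u^{2}=0$ remains non-reduced, and the strict transform of $F_1$ survives in the special fiber --- contradicting item (1), which asserts exactly the two components $E \cup \widehat{F_2}$ with no residual $F_1$. (Note also that before base change, blowing up a Cartier divisor on a smooth total space does nothing at all.) The true mechanism requires that $\cF$ be \emph{singular} along $F_1$ with a specific transverse structure dictated by the determinantal equations of $X_0$, and establishing that local structure is precisely the content of the computations in \cite{vandenDries12} that you defer. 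Two smaller points: the statement involves no normalization step, only base change plus blow-up; and item (4) claims smoothness only \emph{along} $\widehat{F_2}$, not of all of $\widehat{\cF}$ (the exceptional component $E$ may remain singular at this stage), so your Jacobian-criterion check must be localized accordingly and again hinges on the explicit equations. In short, your plan correctly identifies the crux, but the naive model you would start from provably gives the wrong answer; either carry out the determinantal local analysis in full or, as the paper does, cite Theorem 3.3.7 of \cite{vandenDries12}.
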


We describe the family $\widehat{\cF}$ in more detail. Since the Veronese surface $V$ has degree two, the intersection $V \cap X$ gives a smooth sextic curve $\Gamma \in \P^2$. Let $p:S \to \P^2$ be the K3 surface obtained as the double cover of $\P^2$ ramified over $\Gamma$.
Let $P \subset S^{[2]}$ be the image of the map 
\[
\P^{2} \to S^{[2]}, \quad x \mapsto p^{-1}(x),
\]
where $p^{-1}(x)$ denotes the schematic fiber. 
Rephrasing \cite[Theorems $3.5.8$ and $3.5.11$]{vandenDries12} gives the following result.

\begin{Thm}
There is a smooth family $\overline{\cF} \to \Delta'$ such that the general fiber $\overline{\cF}_t = F(\cX_t)$ is the Fano variety of lines of the cubic $\cX_t$ and the special fiber $\cF_0$ is isomorphic to $S^{[2]}$. The family $\widehat{\cF}$ is the blow-up of $\overline{\cF}$ in $P$. Under this identification $\widehat{F_2}$ is the exceptional divisor, and $E \cong \mathrm{Bl}_P(S^{[2]})$.
\end{Thm}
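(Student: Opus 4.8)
The plan is to obtain $\overline{\cF}$ from $\widehat{\cF}$ by a single blow-down, contracting the divisor $\widehat{F_2}$ along one of its two rulings, and then to identify the contracted central fibre with $S^{[2]}$. Recall from Proposition \ref{prop:bigBlowUp} that $\widehat{F_2}\cong\P^2\times(\P^2)^\vee$ is a smooth divisor lying in the smooth locus of $\widehat{\cF}$, that it meets $E$ along the incidence divisor $E\cap\widehat{F_2}$ of class $\cO(1,1)$, and that it carries the two projections $p_1,p_2$ onto $\P^2$ and $(\P^2)^\vee$. I would apply the Fujiki--Nakano contraction criterion to the projection $p_1\colon\widehat{F_2}\to P_1\cong\P^2$ whose fibres are the rulings $\{*\}\times(\P^2)^\vee$.

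The hypothesis on the normal bundle is immediate from what is already recorded. Near $\widehat{F_2}$ the total space is smooth and the central fibre is the reduced normal-crossing divisor $\widehat{\cF}_0=E+\widehat{F_2}$ (this is the purpose of the base change in Proposition \ref{prop:bigBlowUp}); since $\widehat{\cF}_0$ is a fibre of $\widehat{\cF}\to\Delta'$ the line bundle $\cO_{\widehat{\cF}}(\widehat{\cF}_0)$ is trivial, and restricting to $\widehat{F_2}$ gives
\[
N_{\widehat{F_2}/\widehat{\cF}}=\cO_{\widehat{\cF}}(\widehat{F_2})\big|_{\widehat{F_2}}=\cO_{\widehat{F_2}}\!\big(-E\cap\widehat{F_2}\big)=\cO(-1,-1),
\]
which restricts to $\cO_{\P^2}(-1)$ on every fibre of $p_1$. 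The contraction criterion then produces a smooth fivefold $\overline{\cF}$ and a morphism $c\colon\widehat{\cF}\to\overline{\cF}$ realising $\widehat{\cF}$ as the blow-up of $\overline{\cF}$ along the smooth surface $P:=c(\widehat{F_2})\cong\P^2$, with $\widehat{F_2}$ as exceptional divisor; this is the asserted blow-up description. As $\widehat{F_2}$ is contained in a single fibre, $c$ is an isomorphism over $\Delta'\setminus\{0\}$ and $\widehat{\cF}\to\Delta'$ descends to a morphism $\overline{\cF}\to\Delta'$ with general fibre $F(\cX_t)$. Finally $\overline{\cF}_0$ is a Cartier divisor with $c^{*}\overline{\cF}_0=E+\widehat{F_2}$, so $E$ is the strict transform of $\overline{\cF}_0$; hence $E=\mathrm{Bl}_P\overline{\cF}_0$ and $c|_E$ contracts $E\cap\widehat{F_2}$ along the induced $\P^1$-bundle over $P$.

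The substantive point, and the main obstacle, is the identification $\overline{\cF}_0\cong S^{[2]}$ together with the matching of $P$ with the Lagrangian plane $\{p^{-1}(x):x\in\P^2\}$. This is where the special geometry of the determinantal cubic enters: the branch sextic $\Gamma=V\cap X$ controls both the double cover $p\colon S\to\P^2$ and the structure of $E$ arising from $F_1\cong(\P^2)^{[2]}$, and one must upgrade the natural correspondence between $(\P^2)^{[2]}$ and $S^{[2]}$ induced by $p$ into an isomorphism $E\cong\mathrm{Bl}_P S^{[2]}$; contracting $E\cap\widehat{F_2}$ then identifies $\overline{\cF}_0$ with $S^{[2]}$. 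This is precisely the content of \cite[Theorems 3.5.8 and 3.5.11]{vandenDries12}. The delicate part is to check that the contracted fibre is the Hilbert scheme itself rather than a birational model, and here the choice of the ruling $p_1$ is essential: the opposite ruling $p_2$ contracts $\widehat{F_2}$ onto $(\P^2)^\vee$ and produces instead the Mukai flop of $S^{[2]}$ along $P$.
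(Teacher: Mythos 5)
Your proposal is correct and follows essentially the same route as the paper, which states this theorem as a direct rephrasing of \cite[Theorems $3.5.8$ and $3.5.11$]{vandenDries12}: you flesh out the blow-down mechanism (Fujiki--Nakano applied to the ruling $p_1$ of $\widehat{F_2}\cong\P^2\times(\P^2)^{\vee}$, using $N_{\widehat{F_2}/\widehat{\cF}}\cong\cO(-1,-1)$) and then, exactly as the paper does, defer the substantive identifications $\overline{\cF}_0\cong S^{[2]}$ and $E\cong\mathrm{Bl}_P(S^{[2]})$ to van den Dries. The one step you assert rather than prove, the reducedness of $\widehat{\cF}_0$ near $\widehat{F_2}$ (needed for $N_{\widehat{F_2}/\widehat{\cF}}=\cO_{\widehat{F_2}}(-E\cap\widehat{F_2})$), is indeed what the base change in Proposition \ref{prop:bigBlowUp} accomplishes, so your appeal to it is appropriate.
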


Consider the moduli space $M :=M(0,H,-1)$, where $H:= p^*(\cO(1))$. A generic point is represented by a line bundle of degree $0$ supported on a curve in $|H|$. There is a birational map
\begin{equation}\label{eq:flop}
    g: S^{[2]} \dashrightarrow M, \quad \xi \mapsto \omega_C \otimes I_{\xi},
\end{equation}
where $C$ is the unique curve in $|H|$ containing $\xi$. This is well defined outside the plane $P \subset S^{[2]}$. 
The birational map $g$ is the Mukai flop of the plane $P$, and the dual plane $P' \subset M$ is the image of the section of the Lagrangian fibration 
\[
\pi: M \to (\P^2)^{\vee}, \quad  F \mapsto \Supp(F),
\]
where $\Supp(F)$ is the Fitting support. 

\begin{Rem}\label{rem:secondContraction}
Since the cubic $X$ is very general, the plane $P \subset S^{[2]}$ does not deform sideways in $\overline{\cF} \to \Delta'$. The argument in the proof of \cite[Theorem $3.4$]{huybrechts97} shows that the Mukai flop \eqref{eq:flop} can be deformed to $\overline{\cF}$. This implies that $\widehat{\cF}$ can also be contracted to a family $\overline{\cF}' \to \Delta'$ with the same general fiber and special fiber $\overline{\cF}'_0 \cong M$.
\end{Rem}

In \cite{collino82} Collino does the same operations with the Fano variety of lines of a hyperplane section. More precisely, let $H \subset \P^5$ be a general hyperplane. The intersection $V \cap H$ gives a general conic $K \subset \P^2$, and the intersection $X_0 \cap H$ is the secant variety of the image of $K$ via the Veronese embedding. Define $C \coloneqq p^{-1}(K) \subset S$ as the inverse image of the conic via the double cover, it is a genus five curve.

Let $\cX_H \to \Delta$ be the pencil of the hyperplane sections and let $\cZ \subset \cF$ be the relative Fano surface of lines. 
The special fiber is the union of two components $\cZ_0 = Z_1 \cup Z_2 \subset F_1 \cup F_2$, where $Z_2$ is reduced and $Z_1$ is non reduced of multiplicity four. 
Moreover, both $Z_2$ and $Z_{1_{\mathrm{red}}}$ are isomorphic to $\P^2$. 

\begin{Prop}[{{\cite[Proposition $2.1$]{collino82}}}]\label{prop:smallBlowUp}
After a base change along a $2:1$ map $\Delta' \to \Delta$ and blowing up $\cZ$ in $Z_1$, we get a smooth family $\widehat{\cZ} \to \Delta'$ with reducible central fiber
\[
\widehat{\cZ}_0 = E' \cup \widehat{Z_2}. 
\]
Moreover the exceptional divisor $E'$ is isomorphic to $\Sym^2 C$ and $\widehat{Z_2}$ is isomorphic to $Z_2$. 
\end{Prop}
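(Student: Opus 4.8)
The result is Collino's \cite[Proposition $2.1$]{collino82}, so the plan is to reconstruct the geometry behind it and verify the two identifications. The first step is to make the central fibre explicit: under the identification of $\P^5$ with the space of conics in $\P^2$, the threefold $X_0 \cap H$ is the chordal cubic, i.e. the secant variety of the rational normal quartic $R := \nu(K) \subset H \cong \P^4$, and it is singular exactly along $R$. The chords of $R$ are genuine lines on this threefold, and they are parametrized by $\Sym^2 R \cong \Sym^2 \P^1 \cong \P^2$; I would identify this $\P^2$ with $Z_1$, the residual lines giving $Z_2$. The multiplicity four carried by $Z_1$ is the input I would take from the preceding description, and as a sanity check it matches the degree of the cover $E' \to Z_1$ produced below.

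The second step is the local analysis of $\cZ$ transverse to $Z_1$. Here I would follow \cite{collino82} and describe how a chord of $R$ deforms in the pencil: the multiplicity four is reflected in the defining equation of $\cZ \to \Delta$ near a general point of $Z_1$, which becomes reduced only after extracting a square root in the parameter $t$. This is precisely what the base change $\Delta' \to \Delta$, $t = s^2$, accomplishes; afterwards the total space is normal and a single blow-up of $Z_1$ resolves it. The component $\widehat{Z_2}$ is untouched, since $Z_2$ is reduced and $\cZ$ is already smooth near it, so the blow-up is an isomorphism there and $\widehat{Z_2} \cong Z_2$. Smoothness of $\widehat{\cZ}$ is then a local question along $E'$, settled on the explicit charts of the blow-up.

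The heart of the matter, and the step I expect to be the main obstacle, is the identification $E' \cong \Sym^2 C$. The exceptional divisor $E'$ is the projectivized normal cone of $Z_1$ in the base-changed family, and the claim is that the induced map $E' \to Z_1 \cong \Sym^2 R$ is none other than the $4:1$ cover $\Sym^2 C \to \Sym^2 R$ obtained by applying $\Sym^2$ to the double cover $p|_C : C \to K \cong R$. The mechanism is that resolving the singularity over a chord $\{r_1, r_2\} \in \Sym^2 R$ forces a choice of sheet of $C \to R$ over each of $r_1$ and $r_2$ — the ramification being governed by $\Gamma \cap K$, i.e. by the branch locus of the K3 double cover restricted to $K$ — so that the fibre of $E' \to Z_1$ is exactly the set of four pairs of points of $C$ lying over $\{r_1, r_2\}$. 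Making this precise amounts to matching the normal-cone data of $\cZ$ along $Z_1$ with the branch data of $p$, which is the computational core of \cite{collino82}.

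Finally, as a consistency check and an alternative route, I would compare with Proposition \ref{prop:bigBlowUp}. Since $\cZ \subset \cF$ with $Z_1 \subset F_1$ and $Z_2 \subset F_2$, one expects $\widehat{\cZ}$ to arise as the strict transform of $\cZ$ inside $\widehat{\cF}$ and $E'$ as its trace on the component $E \cong \mathrm{Bl}_P(S^{[2]})$. Under the flop of Remark \ref{rem:secondContraction} the surface $\Sym^2 C$ is precisely the Lagrangian in $S^{[2]}$, respectively in $M \cong \overline{\cF}'_0$, that reappears later as $L$, which makes the identification $E' \cong \Sym^2 C$ geometrically transparent and fixes the orientation of the double cover.
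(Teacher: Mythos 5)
Your proposal is correct and takes essentially the same route as the paper, which offers no independent argument but cites Collino's Proposition 2.1 directly: your reconstruction — $X_0 \cap H$ as the chordal cubic, i.e.\ the secant variety of the rational normal quartic $R = \nu(K)$, the order-two base change dictated by the monodromy on the four sheets over $Z_1$, the single blow-up, and the identification of $E'$ with $\Sym^2 C$ as the $4:1$ cover obtained by applying $\Sym^2$ to the double cover $C \to R$ branched at $\Gamma \cap K$ — is precisely the geometry underlying that citation, consistent with the paper's surrounding description (multiplicity four on $Z_1$, genus five for $C$, and the four reduced points of $L \cap M_t$ in Proposition \ref{prop:geometryofZ2}). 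Your closing consistency check via the strict transform in $\widehat{\cF}$ and the flop likewise matches how the paper itself tracks $\widehat{\cZ}$ through the contractions to arrive at Theorem \ref{thm:collino}.
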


We want to understand the image of $\widehat{\cZ}$ via the contraction $\widehat{\cF} \to \overline{\cF}'$ of Remark \ref{rem:secondContraction}. First observe that the intersection $Z_1 \cap Z_2$ consists of the lines tangent to $K$, so it is isomorphic to $K^*$. Via the embedding $Z_2 \subset \P^2 \times (\P^2)^{\vee}$ it gets mapped into the incidence variety inside $K \times K^*$. In particular it maps isomorphically to its image under both projections. 

Via the contraction $\widehat{\cF} \to \overline{\cF}'$, the component $\widehat{F_2}$ in the central fiber $\widehat{\cF}_0$ gets mapped to $(\P^2)^{\vee}$, so it induces a map $\widehat{Z_2} \to (\P^2)^{\vee}$. This map must be an isomorphism. This is because 
\[
\widehat{Z_2} \cong Z_2 \cong \P^2,
\]
and the contraction maps the intersection $\widehat{Z_2} \cap E'$ isomorphically to its image $K^*$. Hence, the special fiber of $\widehat{\cZ}$ remains unchanged under the contraction $\widehat{\cF} \to \overline{\cF}'$. The same argument also works for the contraction $\widehat{\cF} \to \overline{\cF}$. Summarizing the argument, and adjusting the notation, we showed the following.

\begin{Thm}\label{thm:collino}
There is a smooth family $\cF \to \Delta$ and a smooth subvariety $\cZ \subset \cF$ with the following properties. 
\begin{itemize}
    \item The general fibers $\cF_t$ and $\cZ_t$ are respectively the Fano varieties of lines $F(\cX_t)$ of the cubic $\cX_t$, and of its hyperplane section $F(\cX_t \cap H)$.
    \item The special fiber $\cF_0$ is identified with the moduli space $M = M(0,H,-1)$.
    \item The special fiber $\cZ_0$ is a normal crossing $P' \cup L$, where $L \subset M$ is a Lagrangian surface isomorphic to $\Sym^2 C$. The intersection $L \cap P'$ is isomorphic to $K$. 
\end{itemize}
\end{Thm}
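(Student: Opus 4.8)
The plan is to synthesize Theorem~\ref{thm:collino} directly from the two parallel degeneration constructions already established in the excerpt, using the compatibility of the contraction $\widehat{\cF} \to \overline{\cF}'$ with the Collino family. First I would invoke Proposition~\ref{prop:smallBlowUp} to obtain, after the $2:1$ base change $\Delta' \to \Delta$ and the blow-up of $\cZ$ along $Z_1$, the smooth family $\widehat{\cZ} \to \Delta'$ with central fiber $\widehat{\cZ}_0 = E' \cup \widehat{Z_2}$, where $E' \cong \Sym^2 C$ and $\widehat{Z_2} \cong Z_2 \cong \P^2$. The ambient surface $\widehat{\cZ}$ sits inside $\widehat{\cF}$, so the two constructions are genuinely simultaneous: the same base change and the same center $Z_1 \subset F_1$ are used. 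I would then transport $\widehat{\cZ}$ through the contraction $\widehat{\cF} \to \overline{\cF}'$ of Remark~\ref{rem:secondContraction}, which I may assume produces a family with special fiber $\overline{\cF}'_0 \cong M$, so that the ambient $\cF_0$ becomes identified with $M = M(0,H,-1)$.

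The heart of the argument is to check that $\widehat{\cZ}_0$ survives the contraction unchanged and to identify its two components inside $M$. For the component $\widehat{Z_2}$, the contraction sends the divisor $\widehat{F_2}$ onto $(\P^2)^\vee$; restricting induces a map $\widehat{Z_2} \to (\P^2)^\vee$. I would argue this is an isomorphism exactly as in the text: since $\widehat{Z_2} \cong \P^2$ and the contraction carries the intersection curve $\widehat{Z_2} \cap E'$ isomorphically onto its image $K^\vee$ (using that $Z_1 \cap Z_2 \cong K^\vee$ embeds into the incidence variety inside $K \times K^\vee$ and maps isomorphically under both projections), a finite birational morphism $\P^2 \to (\P^2)^\vee$ restricting isomorphically on a curve of positive degree must be an isomorphism. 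Hence $\widehat{Z_2}$ maps to the image $P'$ of the section of the Lagrangian fibration $\pi : M \to (\P^2)^\vee$. For the other component, the contraction is an isomorphism near $E'$ (the flop is supported away from the generic locus Collino blows up), so $E' \cong \Sym^2 C$ is carried to a Lagrangian surface $L \subset M$ isomorphic to $\Sym^2 C$, and the gluing curve becomes $L \cap P' \cong K$.

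After establishing that the special fiber is unchanged, I would rename $\widehat{\cZ} \rightsquigarrow \cZ$ and $\widehat{\cF} \rightsquigarrow \cF$ (adjusting notation as the text announces), record smoothness of the total families from Proposition~\ref{prop:bigBlowUp}(4) and Proposition~\ref{prop:smallBlowUp}, and note that smoothness of $\widehat{\cZ}$ together with the transverse meeting of its two components yields the normal crossing structure $\cZ_0 = P' \cup L$. The Lagrangian property of $L$ in $M$ I would deduce either from the degeneration (each $\cZ_t$ is a Fano surface of lines in a cubic threefold, hence Lagrangian in $\cF_t$, a condition that is closed in the smooth family) or directly, since $L \cong \Sym^2 C$ for a genus-five curve $C$ has the right dimension and lies in a holomorphic symplectic fourfold with $\pi|_L$ dominating $(\P^2)^\vee$.

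The main obstacle is the compatibility claim: that the contraction $\widehat{\cF} \to \overline{\cF}'$, whose existence rests on deforming the Mukai flop \eqref{eq:flop} sideways (Remark~\ref{rem:secondContraction} via \cite{huybrechts97}), restricts coherently to $\widehat{\cZ}$ without altering its central fiber. One must verify that the flopping locus $P$ meets $\cZ$ only along the locus already contracted and that no extra components or non-reducedness are introduced when passing to $\overline{\cF}'$. This is precisely the content of the isomorphism argument for $\widehat{Z_2} \to (\P^2)^\vee$ and of the assertion that $\widehat{Z_2} \cap E'$ maps isomorphically onto $K^\vee$; making these rigorous — in particular ruling out that the finite map $\widehat{Z_2} \to (\P^2)^\vee$ could be ramified or of degree $>1$ — is where the real work lies. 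Once that local analysis of the contraction along $\widehat{\cZ}$ is secured, the remaining identifications are formal.
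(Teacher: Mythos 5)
Your proposal follows the paper's own route exactly: it combines Proposition \ref{prop:smallBlowUp} with the contraction of Remark \ref{rem:secondContraction}, shows the special fiber of $\widehat{\cZ}$ survives the contraction unchanged via the same isomorphism argument for $\widehat{Z_2} \to (\P^2)^{\vee}$ (using that $Z_1 \cap Z_2 \cong K^{\vee}$ lies in the incidence variety and maps isomorphically under both projections), and your degree-one computation for the finite map $\P^2 \to (\P^2)^{\vee}$ restricting isomorphically on the conic fills in the one step the paper leaves implicit. The only flaw is notational: the family called $\cF$ in the theorem is the contracted family $\overline{\cF}'$ (smooth with special fiber $M$), not $\widehat{\cF}$ (whose special fiber is the reducible $E \cup \widehat{F_2}$), so your final renaming should read $\overline{\cF}' \rightsquigarrow \cF$.
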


Here, the terminology ``normal crossing'' is used to indicate the union of two smooth varieties which intersect along a smooth divisor. 
We conclude the section with a more detailed description of the geometry of the central fiber. 
If the K3 surface $S$ is very general, and this happens if we choose the cubic $X$ to be very general, the Neron-Severi lattice of the moduli space $M$ is
\begin{equation}\label{eq:NeronSeveriOfModuliSpace}
\NS (M) = \Z \lambda \oplus \Z f, 
\end{equation}
the Beauville-Bogomolov form with respect to this basis has matrix
\[
\begin{pmatrix}
2 & 2 \\
2 & 0 
\end{pmatrix}.
\]
Geometrically, $f:= \pi^*(\cO_{(\P^2)^{\vee}}(1))$ is the inverse image via the Lagrangian fibration of a hyperplane class, and $\lambda$ restricts to a principal polarization on a general fiber $M_t$. From the point of view of the Hilbert scheme, we have
\[
\NS(S^{[2]}) = \Z h \oplus \Z \delta,
\]
where $h$ is the polarization induced by $H=p^*(\cO_{\P^2}(1))$ on $S$, and $\delta$ is half the exceptional divisor of the Hilbert-Chow map. 
The Mukai flop identifies the divisors
\begin{align*}
    h &\longleftrightarrow \lambda, \\
    h - &\delta \longleftrightarrow f.
\end{align*}

\begin{Rem}\label{rem:polarization}
As explained in \cite[Section $3.7$]{vandenDries12} the family $\cF \to \Delta$ of Theorem \ref{thm:collino} is a projective family, and comes equipped with an ample line bundle $\cL$. On the general fiber this line bundle is the Pl\"{u}cker polarization, and on the special fiber $\cF_0 = M$ is  $\cO_M(\lambda + f)$. It has square $6$ and divisibility $2$ on every fiber. 
\end{Rem}

\begin{Prop}\label{prop:geometryofZ2}
The Lagrangian fibration $\pi$ is finite of degree $4$ when restricted to $L$.
\end{Prop}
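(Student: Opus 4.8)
The plan is to describe the morphism $\pi|_L$ completely explicitly and then read off both finiteness and the degree. Recall from Theorem~\ref{thm:collino} that $L \cong \Sym^2 C$, where $C = p^{-1}(K) \subset S$ is the double cover of the conic $K \subset \P^2$, so that $p|_C : C \to K$ has degree two. Since $\pi$ is the Fitting-support morphism, $\pi|_L$ is automatically a morphism of projective surfaces; the content is to identify it. I claim that under $L \cong \Sym^2 C$ a general point $\{x,y\}$ corresponds to a degree-zero sheaf supported on the curve $p^{-1}(\ell) \in |H|$, where $\ell = \overline{p(x)p(y)} \subset \P^2$ is the line joining the two image points. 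Granting this, $\pi|_L$ factors as
\[
\Sym^2 C \xrightarrow{\Sym^2(p|_C)} \Sym^2 K \isomor (\P^2)^\vee,
\]
where the second arrow sends $\{u,v\}$ to the secant line $\overline{uv}$ (the tangent line to $K$ when $u=v$).

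The second map is the classical isomorphism $\Sym^2 K \cong (\P^2)^\vee$: since $K$ is a smooth conic, every line of $\P^2$ meets it in a unique unordered pair, so the secant map is a degree one morphism from the smooth surface $\Sym^2 K \cong \P^2$ to the normal surface $(\P^2)^\vee$, hence an isomorphism by Zariski's main theorem. The first map $\Sym^2(p|_C)$ is finite, being induced on symmetric squares by the finite morphism $p|_C$, and its degree is $(\deg p|_C)^2 = 2^2 = 4$: over a general unordered pair $\{u,v\}$ with $u \neq v$ and $u,v$ away from the branch locus $K \cap \Gamma$, a preimage is the choice of one of the two points of $C$ over $u$ together with one of the two over $v$, giving exactly four pairs. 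Composing, $\pi|_L$ is finite of degree $4$. As a consistency check, the $g^1_2$ on $C$ cut out by $p|_C$ gives the curve of pairs lying over a single point of $K$; it is sent to the tangent lines $K^\vee$, matching the conic $\pi(L \cap P') \cong K$ obtained from the section $P'$, so that no curve is contracted.

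The only real work is the identification of $\pi|_L$ with $\Sym^2(p|_C)$, and this is where I expect the main difficulty to lie. It must be extracted from Collino's construction underlying Proposition~\ref{prop:smallBlowUp}: a point of $L = E'$ is a limit of a line on the smooth cubic threefold $\cX_t \cap H$, and under the degeneration together with the contraction of Remark~\ref{rem:secondContraction} this limiting line should record the pair of points of $C$ it determines, while the support of the associated sheaf is governed by the line $\overline{p(x)p(y)}$ (equivalently, the relevant pencil of conics). Making this precise requires tracking the support data through the base change, blow-up, and contraction of Section~\ref{sec:Degeneration}; once the support of the generic member of $L$ is pinned down, the degree and finiteness follow from the formal computation above.
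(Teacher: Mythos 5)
Your proposal is correct, and at its core it is the same computation as the paper's: both transfer the question to the Hilbert-scheme model via the Mukai flop and count length-two subschemes of $C$ lying over the intersection $\ell \cap K$ of a line with the conic. The paper's proof is slightly more pedestrian in form: it proves finiteness as properness plus quasi-finiteness, checks the curve $P' \cap L$ separately (it maps bijectively onto $K^{\vee}$), and on the complement of $P'$, where the flop is an isomorphism, observes that the fiber over $\ell$ consists of the finitely many $\xi \in \Sym^2 C \subset S^{[2]}$ with $p(\xi)$ contained in $\ell \cap K$ --- exactly four reduced ones when $\ell$ meets $K$ transversally away from $\Gamma$. Your factorization $\pi|_L = \bigl(\Sym^2 K \isomor (\P^2)^{\vee}\bigr) \circ \Sym^2(p|_C)$ buys a cleaner global statement: finiteness and the degree everywhere at once, including over tangent lines and over the branch locus, without the proper-plus-quasi-finite reduction; the only point to make explicit is that agreement of the two morphisms on a dense open subset of $L$ propagates to all of $L$ (the flop is undefined along $P' \cap L$), which is automatic since $L$ is reduced and the target is separated.

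The step you flag as the main difficulty is much cheaper than you fear, and in particular does not require re-tracking Collino's degeneration through the base change, blow-up, and contraction. The flop formula \eqref{eq:flop} already says $g(\xi) = \omega_{C'} \otimes I_{\xi}$, where $C' \in |H|$ is the unique curve containing $\xi$; for $\xi = \{x,y\} \subset C$ general, that curve is $p^{-1}(\ell)$ with $\ell = \overline{p(x)p(y)}$, and $\Supp(g(\xi)) = C'$, which is precisely your claimed support description. Combined with the identification, built into the construction of Section \ref{sec:Degeneration} and Theorem \ref{thm:collino}, of $L$ away from $P'$ with the image under $g$ of $C^{[2]} = \Sym^2 C \subset S^{[2]}$, your identification of $\pi|_L$ follows at once --- indeed the paper's own proof invokes exactly this identification, in the sentence reducing to quasi-finiteness of $\Sym^2 C - K \to (\P^2)^{\vee}$. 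So your argument is complete once you cite \eqref{eq:flop} and Theorem \ref{thm:collino} instead of deferring to an unexecuted degeneration analysis.
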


\begin{proof}
The map $\pi|_{L}: L \to (\P^2)^{\vee}$ is proper, so it suffices to show that it is quasi-finite. 
The intersection $P' \cap L$ is one dimensional and it maps bijectively onto the dual conic $K^*$ via $\pi$. 
On the complement of $P'$ the Mukai flop is an isomorphism, so it suffices show that $\Sym^2 C - K \to (\P^2)^{\vee}$ is quasi-finite. 

The fiber of a line $l \in (\P^2)^{\vee}$ consists of the subschemes $\xi \in S^{[2]}$ mapping to the schematic intersection $l \cap K$. 
The number of such subschemes is always finite. If $l$ intersects $K$ transversely outside the ramification locus $\Gamma$, there are four reduced subschemes mapping to the intersection.
\end{proof}

\section{Numerical computations}\label{sec:NumericalComputations}
\subsection{Review: Atomic and modular sheaves}\label{sec:atomicSheaves}
We briefly review the theory of atomic and modular sheaves. 
The main references are \cite{ogrady21,ogrady22} for modular sheaves and \cite{beckmann21,beckmann22,markman21} for atomic sheaves.  
A more detailed overview of the background is in \cite[Section 2]{beckmann21}.

Let $X$ be a hyper-K\"ahler manifold of dimension $2n$. The \emph{rational extended Mukai lattice} is the rational vector space
\[ 
\widetilde{H}(X,\Q) \coloneqq \Q\alpha \oplus H^2(X,\Q) \oplus \Q\beta.
\]
It is endowed with the non-degenerate quadratic form $\tilde{q}$ obtained by extending the BBF form $q$ on $H^2(X,\Q)$ by declaring that $\alpha$ and $\beta$ are orthogonal to $H^2(X,\Q)$, isotropic and $\tilde{q}(\alpha,\beta) = -1$.
It is also equipped with a Hodge structure, defined by $\widetilde{H}(X,\C)^{2,0} = H^{2,0}(X)$ and imposing compatibility with $\tilde{q}$. 

Let $\SH(X) \subseteq H^*(X,\Q)$ be the \emph{Verbitsky component}, i.e. the subalgebra of $H^*(X,\Q)$ generated by $H^2(X,\Q)$.
By \cite[Lemma $3.7$]{taelman21} there is a short exact sequence 
\begin{equation*}
    0 \to \mathrm{SH}(X) \xrightarrow{\Psi} \Sym^n \widetilde{H}(X,Q) \xrightarrow{\Delta} \Sym^{n-2} \widetilde{H}(X,Q) \to 0.
\end{equation*}
Here $\Delta$ is the Laplacian operator, defined by 
\[
x_1 \cdot \cdot \cdot x_n \mapsto \sum_{i < j}\tilde{q}(x_i,x_j) x_1 \cdot \cdot \cdot \hat{x}_i \cdot \cdot \cdot \hat{x}_j \cdot \cdot \cdot x_n.
\]
The map $\Psi$ is defined as follows. First for every $\lambda \in H^2(X,\Q)$ define the operator $e_{\lambda}$ on $\widetilde{H}(X,\Q)$ by 
\[
e_{\lambda}(\alpha) = \lambda, \quad e_{\lambda}(\beta) =  0, \quad  e_{\lambda}(\mu) = q(\lambda,\mu)\beta \quad \forall \mu \in H^2(X,\Q). 
\]
If we denote by $x^{(n)} \in \Sym^n\widetilde{H}(X,\Q)$ the $n$-th symmetric power of $x \in \widetilde{H}(X,\Q)$, then $\Psi$ is defined as
\[
\lambda_1 \dots \lambda_k \mapsto e_{\lambda_1} \dots e_{\lambda_k}(\alpha^{(n)}/n!),
\]
where $e_{\lambda}$ acts on $\Sym^n \widetilde{H}(X,\Q)$ by derivations. 

Recall that there is an action of the LLV algebra $\fg(X)$ both on $\SH(X)$ and $\widetilde{H}(X,\Q)$, and the injection $\Psi$ is equivariant with respect to this action; for details see \cite{looijengaLunts97, verbitsky96, taelman21}.
Similarly $\Psi$ is an isometry with respect to the bilinear form $b_{\SH}$ on $\SH(X)$ and the one\footnote{To be precise, one needs to rescale by $c_X$ to obtain an isometry, see \cite[Section 2]{beckmann21}.} induced by $\tilde{q}$ on $\Sym^n\widetilde{H}(X,\Q)$.
We denote by
\[
T : \Sym^n \widetilde{H}(X,\Q) \to \mathrm{SH}(X)
\]
the orthogonal projection with respect to the bilinear form on $\Sym^n \widetilde{H}(X,\Q)$ induced by $\tilde{q}$. 

\begin{Defi}[{{\cite[Definition $1.1$]{beckmann22}}}]\label{def:defiMukaiVector}
An object $E \in \Db(X)$ is atomic if there exists a non-zero $\tilde{v}(E) \in \widetilde{H}(X,\Q)$ such that 
\[
\mathrm{Ann}(v(E)) = \mathrm{Ann}(\tilde{v}(E)) \subset \fg(X).
\]
\end{Defi}

\begin{Rem}
    This notion is related to the obstruction map \eqref{eq:obstructionMapDef} as follows. In \cite[Theorem 1.7]{markman21} it is shown that if $E$ is 1-obstructed (that is $\chi_E$ has rank one) and $v(E)$ is not killed by the LLV algebra, then $E$ is atomic. The same result is shown in \cite[Theorem 1.3]{beckmann22} as a consequence of the equivalence between atomicity and cohomological obstruction map of rank one, established in \cite[Theorem 1.2]{beckmann22}. 
\end{Rem} 

\begin{Prop}[{{\cite[Proposition 3.3]{beckmann22} and \cite[Theorem 1.7]{markman21}}}]
If $E \in \Db(X)$ is atomic, the projection $T(\tilde{v}(E)^{(n)})$ is a rational multiple of the projection $v(E)_{\SH}$ of the Mukai vector onto the Verbitsky component. 
\end{Prop}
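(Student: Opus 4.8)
The plan is to run the entire argument inside the representation theory of the Looijenga--Lunts--Verbitsky algebra. I would use three structural inputs recalled above: the isomorphism $\fg(X) \cong \mathfrak{so}(\widetilde{H}(X,\Q))$, the fact that the Verbitsky component $\SH(X)$ is an \emph{irreducible} $\fg(X)$-module, realized through $\Psi$ as the kernel of the contraction $\Delta$ on $\Sym^n\widetilde{H}$ (the degree-$n$ spherical harmonics), and the equivariance of the relevant projections. Concretely, $T$ is $\fg(X)$-equivariant because $\Psi(\SH(X))$ is a submodule and $\tilde q$ is $\fg(X)$-invariant, and the projection $H^{*}(X,\Q)\to\SH(X)$, $v\mapsto v_{\SH}$, is equivariant because $\SH(X)$ occurs in $H^{*}(X,\Q)$ with multiplicity one. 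The proposition then follows once I show that the two vectors $T(\tilde v(E)^{(n)})$ and $v(E)_{\SH}$ are fixed by one and the same subalgebra, and that the corresponding fixed subspace of $\SH(X)$ is a single line.

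Write $\fq := \mathrm{Ann}(\tilde v(E)) \subset \fg(X)$. First I would check that $\mathrm{Ann}(\tilde v(E)^{(n)}) = \fq$: since $\fg(X)$ acts on $\Sym^n\widetilde{H}$ by derivations, $g\cdot \tilde v(E)^{(n)} = n\,(g\cdot \tilde v(E))\,\tilde v(E)^{(n-1)}$, and multiplication by $\tilde v(E)^{(n-1)}$ is injective on $\widetilde{H}$ (the symmetric algebra is a domain), so the expression vanishes precisely when $g\cdot\tilde v(E)=0$. Equivariance of $T$ then yields $\fq\subseteq \mathrm{Ann}\bigl(T(\tilde v(E)^{(n)})\bigr)$. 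For the other vector, atomicity gives $\fq = \mathrm{Ann}(\tilde v(E)) = \mathrm{Ann}(v(E))$, and equivariance of $v\mapsto v_{\SH}$ gives $\mathrm{Ann}(v(E))\subseteq \mathrm{Ann}(v(E)_{\SH})$. Hence both $T(\tilde v(E)^{(n)})$ and $v(E)_{\SH}$ lie in $\SH(X)^{\fq}$.

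The crux is to prove $\dim_{\Q}\SH(X)^{\fq}=1$. When $\tilde q(\tilde v(E))\neq 0$ this is a clean branching computation: $\fq\cong\mathfrak{so}(\tilde v(E)^{\perp})$, and the restriction of the spherical representation $\SH(X)=V_{n\omega_1}$ of $\mathfrak{so}(\widetilde{H})$ to $\mathfrak{so}(\tilde v(E)^{\perp})$ is multiplicity-free, $\bigoplus_{j=0}^{n}V'_{j\omega_1}$, so its trivial summand---the space of invariants---is one-dimensional, spanned by the zonal (Gegenbauer) harmonic attached to $\tilde v(E)$, which is exactly $T(\tilde v(E)^{(n)})$ and is nonzero. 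When $\tilde q(\tilde v(E))=0$, the class $\tilde v(E)^{(n)}$ already lies in $\ker\Delta$, so $T(\tilde v(E)^{(n)})=\Psi^{-1}(\tilde v(E)^{(n)})\neq 0$. Since the fixed line is one-dimensional over $\C$, hence over $\Q$ as all the data are $\Q$-rational, the two fixed vectors span the same line and the proportionality constant is rational, which is the assertion.

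The hard part will be the one-dimensionality of $\SH(X)^{\fq}$, and specifically the \emph{isotropic} case $\tilde q(\tilde v(E))=0$. There $\fq$ is the stabilizer of a nonzero null vector, a non-reductive (parabolic-type) subalgebra, so the tidy $\mathfrak{so}(m)\downarrow\mathfrak{so}(m-1)$ branching is unavailable and one must compute invariants directly: decompose $\fq$ as nilradical plus Levi $\cong\mathfrak{so}(b_2)\oplus\text{(scaling)}$, identify the nilradical-invariants inside $\ker\Delta$, and cut down by the Levi action---or, alternatively, obtain the bound by specializing from the non-isotropic case using continuity of $\dim\SH(X)^{\mathrm{Ann}(w)}$ as $w$ approaches the null cone. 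I would also record that $v(E)_{\SH}\neq 0$ for atomic $E$, since a vanishing Verbitsky projection would force $\mathrm{Ann}(v(E))=\fg(X)$, contradicting $\mathrm{Ann}(\tilde v(E))=\fq\subsetneq\fg(X)$ for $\tilde v(E)\neq 0$; this ensures that ``rational multiple'' is a genuine nonzero proportionality.
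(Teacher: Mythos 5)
First, a point of order: the paper never proves this proposition --- it is imported verbatim from \cite[Proposition 3.3]{beckmann22} and \cite[Theorem 1.7]{markman21} --- so there is no internal proof to compare against. Judged on its own, your skeleton is the natural LLV-theoretic argument and is essentially the same circle of ideas as the cited sources: equivariance of $T$ and of $v \mapsto v_{\SH}$ (your multiplicity-one claim is correct and easy to certify: any copy of $V_{n\omega_1}$ in $H^*(X,\Q)$ contains the extreme $h$-weight, which sits in cohomological degree $0$, and $H^0$ is a line), the identity $\mathrm{Ann}(\tilde v^{(n)}) = \mathrm{Ann}(\tilde v)$ via derivations and integrality of the symmetric algebra, and the reduction to $\dim \SH(X)^{\fq} = 1$. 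The anisotropic branching computation is correct, and you rightly identify the isotropic case as the crux; your primary plan there does close, but with one correction: the annihilator of a nonzero isotropic $\tilde v$ is $\mathfrak{so}(b_2) \ltimes \mathfrak{n}$ \emph{without} the scaling factor $\mathfrak{gl}_1$ (which rescales $\tilde v$ rather than killing it). This is not cosmetic: $\tilde v^{(n)}$ has nonzero weight under the scaling, so if the scaling sat inside $\fq$ the invariant space would be zero and the argument would collapse. With the correct $\fq$, the standard fact that $V^{\mathfrak{n}}$ is the irreducible Levi-module generated by the highest weight line gives $(V_{n\omega_1})^{\mathfrak{n}} = \Q\,\tilde v^{(n)}$, already one-dimensional and $\mathfrak{so}(b_2)$-trivial (the weight $n\epsilon_1$ restricts to zero), so $\SH(X)^{\fq} = \Q\, T(\tilde v^{(n)})$ as needed.

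Two genuine flaws to repair. First, your fallback via ``continuity of $\dim\SH(X)^{\mathrm{Ann}(w)}$'' fails: this dimension is the kernel dimension of a family of linear maps, hence only \emph{upper} semicontinuous in $w$, so specializing to the null cone bounds the invariants from \emph{below} by $1$, not from above; only the direct parabolic computation gives the upper bound, so you cannot keep it as an alternative. Second, your nonvanishing claim is a non sequitur: $v(E)_{\SH} = 0$ does not force $\mathrm{Ann}(v(E)) = \fg(X)$, since $v(E)$ could a priori live entirely in other isotypic summands of $H^*(X,\Q)$ with proper annihilator. What is true, and suffices for every use the paper makes of the proposition: the equivariant projection commutes with the grading operator, hence preserves cohomological degree, and $H^0 \oplus H^2 \subset \SH(X)$; so whenever $r(E) \neq 0$ (the only case invoked, e.g.\ in Proposition \ref{prop:discriminant}) one gets $v(E)_{\SH} \neq 0$ directly, and your one-dimensionality then yields $T(\tilde v(E)^{(n)}) \in \Q^{\times}\, v(E)_{\SH}$. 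For rank-zero atomic objects one should either quote the cited sources for nonvanishing or weaken the conclusion to the (equally usable) statement that both projections lie on a single rational line.
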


\subsection{Mukai vector of atomic objects on fourfolds}
Now we consider $X$ a \HK manifold of dimension four.
We want to give an explicit formula for the Mukai vector of an atomic object, in terms of its extended Mukai vector. 
In order to do this, it is necessary to choose a representative for the line spanned by $\tilde{v}(E)$. 
If the rank of $E$ does not vanish, we can normalize $\tilde{v}(E)$ as follows.

\begin{Prop}{{\cite[Theorem $6.13(3)$]{markman21}}}\label{prop:normalization}
Assume $r(E) \neq 0$. Then $\tilde{v}(E)$ can be chosen of the form 
\[
r(E)\alpha + c_1(E) + s(E)\beta,
\]
where $s(E)$ is a rational number. 
\end{Prop}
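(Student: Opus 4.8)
The plan is to exploit the only tool at hand, namely the preceding Proposition, which supplies a rational scalar $c$ with $T(\tilde{v}(E)^{(n)}) = c\, v(E)_{\SH}$, and to read off the components of $\tilde{v}(E)$ by comparing the two sides in the lowest cohomological degrees. Since $\tilde{v}(E)$ is determined only as a line, I would first fix an arbitrary representative and write it as $\tilde{v}(E) = a\alpha + \lambda + b\beta$ with $a,b \in \Q$ and $\lambda \in H^2(X,\Q)$. The assertion that $s(E) = b$ is rational is then automatic from $\widetilde{H}(X,\Q) = \Q\alpha \oplus H^2(X,\Q) \oplus \Q\beta$, so the real content is to show that $a \neq 0$ and that, after rescaling so that $a = r(E)$, one has $\lambda = c_1(E)$.

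The key computation is to expand the symmetric power and apply $T$. Because $e_\mu$ acts by derivations one has $\Psi(1) = \alpha^{(n)}/n!$ and $\Psi(\mu) = \mu\,\alpha^{(n-1)}/(n-1)!$ for $\mu \in H^2(X,\Q)$, so the classes $\alpha^{(n)}$ and $\mu\,\alpha^{(n-1)}$ already lie in the image of $\Psi$, on which $T$ acts as $\Psi^{-1}$. Expanding,
\[
\tilde{v}(E)^{(n)} = a^n\, \alpha^{(n)} + n\,a^{n-1}\,\alpha^{(n-1)}\lambda + (\text{higher-weight terms}),
\]
and since $T$ is $\fg(X)$-equivariant it preserves the grading induced by the Cartan of the LLV $\mathfrak{sl}_2$, under which $\alpha,\lambda,\beta$ have weights $-1,0,1$ and the weight-$(-n+j)$ part corresponds to the degree-$2j$ part of $\SH(X)$. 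Applying $T$ therefore gives
\[
T(\tilde{v}(E)^{(n)}) = n!\, a^n\cdot 1 + n!\, a^{n-1}\,\lambda + (\text{degree} \ge 4).
\]
Comparing with $c\, v(E)_{\SH}$, whose degree-$0$ and degree-$2$ parts are $c\,r(E)$ and $c\,c_1(E)$ — here I use that on a \HK manifold $\td(X)$ has no degree-two component, so the degree-two part of the Mukai vector is simply $\ch_1(E) = c_1(E)$ — yields the two identities $n!\,a^n = c\,r(E)$ and $n!\,a^{n-1}\lambda = c\,c_1(E)$.

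From the first identity and the hypothesis $r(E)\neq 0$ I would deduce $a \neq 0$. Indeed, the line $\langle T(\tilde{v}(E)^{(n)})\rangle$ agrees with $\langle v(E)_{\SH}\rangle$, which is nonzero since its degree-$0$ part is $r(E)\neq 0$; hence $c \neq 0$, and then $n!\,a^n = c\,r(E)$ forces $a \neq 0$. Dividing the two identities gives $\lambda = (a/r(E))\,c_1(E)$, so rescaling the chosen representative so that its $\alpha$-coefficient equals $r(E)$ produces exactly $\tilde{v}(E) = r(E)\alpha + c_1(E) + s(E)\beta$ with $s(E) = b \in \Q$.

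I expect the main obstacle to be bookkeeping rather than conceptual: one must check that $T$ genuinely acts as claimed on the two lowest graded pieces, i.e.\ that no higher terms of $\tilde{v}(E)^{(n)}$ (in particular those involving $\beta$) contaminate the degree-$0$ and degree-$2$ outputs. This rests on $T$ being strictly weight-preserving, together with the fact that the unique weight-$(-n)$ and weight-$(-n+1)$ contributions are $a^n\alpha^{(n)}$ and $n\,a^{n-1}\alpha^{(n-1)}\lambda$. One must also verify that the factorial normalizations in $\Psi(1)$ and $\Psi(\mu)$ are compatible, so that the two proportionality constants match and force $\lambda = c_1(E)$ exactly rather than merely up to scale. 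The nonvanishing of $c$, which licenses the rescaling, is the one input I would draw from the cited results on the extended Mukai vector rather than reprove here.
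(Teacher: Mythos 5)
Your proposal is correct in substance, but note that the paper does not actually prove this statement: it is imported verbatim from Markman (Theorem 6.13(3)), so your derivation is a genuinely self-contained alternative. It is, moreover, very much in the spirit of the paper's own toolkit: the expansion of $\tilde{v}(E)^{(n)}$, the identities $T(\alpha^{(n)}) = n!$ and $T(\alpha^{(n-1)}\mu) = (n-1)!\,\mu$ coming from $T \circ \Psi = \mathrm{id}$, and the weight-preservation of $T$ (orthogonal projection onto a subrepresentation, hence $\fg(X)$-equivariant) are exactly the ingredients the paper uses later in the proof of Proposition \ref{prop:discriminant}, where it deduces $n!\,r(F)^{n-1}v(F)_{\SH} = T(\tilde{v}(F)^{(n)})$ by the same degree-$0$ comparison --- except that there the normalization you are proving is already assumed. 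Your bookkeeping checks out: only $a^n\alpha^{(n)}$ and $n\,a^{n-1}\alpha^{(n-1)}\lambda$ sit in the two lowest weights, $\td_X$ has no degree-two component, and dividing $n!\,a^{n-1}\lambda = c\,c_1(E)$ by $n!\,a^n = c\,r(E)$ gives $\lambda = (a/r(E))\,c_1(E)$, after which rescaling by $r(E)/a$ yields the stated form with $s(E) \in \Q$.

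The one genuine soft spot is the nonvanishing of $c$, and your inline justification for it is circular: from $T(\tilde{v}(E)^{(n)}) = c\,v(E)_{\SH}$ with $v(E)_{\SH} \neq 0$ one cannot conclude $c \neq 0$ --- the case $c = 0$ simply means $T(\tilde{v}(E)^{(n)}) = 0$, and asserting that the two \emph{lines} agree already presupposes $c \neq 0$. You correctly flag this and defer it to the literature, which is legitimate, but it can in fact be closed with the paper's own lemmas: the assignment $\tilde{v} \mapsto T(\tilde{v}^{(n)})$ is $\mathrm{SO}(\widetilde{H}(X,\C))$-equivariant (as used in the lemma preceding Theorem \ref{thm:eulerCharacteristic}), so any nonzero $\tilde{v}$ may be moved, up to scale, to $\alpha + s\beta$ or to $\beta$, and Lemma \ref{lem:proj_alphabeta} then shows $T(\tilde{v}^{(n)}) \neq 0$ in both cases (the degree-$0$ term $c_X n!$ in the first case, $c_X\fq_{2n}$ in the second). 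Since atomicity requires $\tilde{v}(E) \neq 0$, this forces $c \neq 0$, and with that emendation your argument is complete.
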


We are also going to need to understand the projection on the Verbitsky component of certain classes in $\Sym^n\widetilde{H}(X,\Q)$.
For this, we recall the following notation. 

\begin{Defi}[{{\cite[Section 3]{beckmann21}}}]\label{def:MonodromyInvariantClasses}
    Let $X$ be a HK manifold of dimension $2n$. 
    For every $1 \leq i \leq n$, denote by $\mathsf{q}_{2i} \in \SH^{4i}(X,\Q)$ the class defined by the property 
    \begin{equation*}
        \int_X{\omega^{2n-2i}\mathsf{q}_{2i}} = c_X\frac{(2n-2i)!}{2^{n-i}(n-i)!}q(\omega)^{n-i} = c_X(2n-2i-1)!!q(\omega)^{n-i},
    \end{equation*}
    for every $\omega \in H^2(X,\Q)$. 
    For $i = 0$, we set $\sfq_0 \coloneqq 1$.
\end{Defi}

\begin{Rem}
    By \cite[Lemma $2.3$]{beckmann21} the classes $\sfq_{2i}$ generate the monodromy invariant subspace of $\SH^{4i}(X)$ for every $i$.
\end{Rem}

\begin{Lem}{{\cite[Lemma $3.5$]{beckmann21}}}\label{lem:proj_alphabeta}
For $1 \leq i \leq n$ we have 
\[
T(\alpha^{(n-i)}\beta^{(i)}) = (n-i)!\sfq_{2i}.
\]
\end{Lem}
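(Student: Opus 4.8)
The plan is to unwind the definition of the orthogonal projection $T$ and reduce the identity to a single scalar, which is then pinned down by pairing against one well-chosen test class. Since $\Psi$ is an isometry onto its image (up to the factor $c_X$ recorded in the footnote) and the induced form on $\SH(X)$ is nondegenerate, $T$ is characterized by the adjunction
\[
\langle \Psi(T(w)),\Psi(s)\rangle = \langle w,\Psi(s)\rangle \qquad \text{for all } s\in\SH(X),
\]
where $\langle\,\cdot\,,\cdot\,\rangle$ is the form on $\Sym^n\widetilde{H}(X,\Q)$ induced by $\tilde q$. First I would observe that $\alpha^{(n-i)}\beta^{(i)}$ is fixed by the monodromy (which acts on $\widetilde H$ fixing $\alpha,\beta$ and preserving $\tilde q$), and that both $\Psi$ and $T$ are equivariant; hence $T(\alpha^{(n-i)}\beta^{(i)})$ is a monodromy-invariant class in $\SH^{4i}(X)$. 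By \cite[Lemma $2.3$]{beckmann21} this space is spanned by $\fq_{2i}$, so $T(\alpha^{(n-i)}\beta^{(i)}) = c\,\fq_{2i}$ for a single unknown scalar $c$, and it remains only to show $c = c_X(n-i)!$.

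To determine $c$, I would pair against the test class $s = \omega^{2n-2i}$ for generic $\omega\in H^2(X,\Q)$ with $q(\omega,\omega)\neq 0$. The definition of $\fq_{2i}$ gives $\int_X \fq_{2i}\,\omega^{2n-2i} = (2n-2i-1)!!\,q(\omega,\omega)^{n-i}\neq 0$, so this pairing detects the coefficient $c$. By the isometry property (\cite[Theorems 2.4, 4.8, 4.9]{taelman21}, \cite[Section 2]{beckmann21}) the left-hand side of the adjunction becomes $c\,\langle\Psi(\fq_{2i}),\Psi(\omega^{2n-2i})\rangle = c\cdot c_X^{-1}(-1)^n\int_X \fq_{2i}\,\omega^{2n-2i}$, the sign $(-1)^n$ coming from the Mukai-type duality involution; the right-hand side $\langle\alpha^{(n-i)}\beta^{(i)},\Psi(\omega^{2n-2i})\rangle$ is then computed directly inside $\Sym^n\widetilde H$.

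The core of the argument is the expansion of $\Psi(\omega^{2n-2i}) = \tfrac1{n!}\,e_\omega^{2n-2i}(\alpha^{(n)})$. Since $e_\omega$ is the derivation with $e_\omega(\alpha)=\omega$, $e_\omega(\omega)=q(\omega,\omega)\beta$ and $e_\omega(\beta)=0$, applying it to a monomial $\alpha^a\omega^b\beta^c$ produces only the two moves $a\,\alpha^{a-1}\omega^{b+1}\beta^c$ and $b\,q(\omega,\omega)\,\alpha^a\omega^{b-1}\beta^{c+1}$. A direct induction tracking these two moves shows that the coefficient of $\alpha^{(i)}\beta^{(n-i)}$ is $\binom{n}{i}(2n-2i-1)!!\,(n-i)!\,q(\omega,\omega)^{n-i}$, while every other monomial carries a positive power of $\omega$ and is therefore orthogonal to $\alpha^{(n-i)}\beta^{(i)}$, as $\alpha$ and $\beta$ pair only with each other. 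Using that $\langle\alpha^{(n-i)}\beta^{(i)},\alpha^{(i)}\beta^{(n-i)}\rangle$ equals the permanent $(-1)^n(n-i)!\,i!$, the factors $n!$ and $i!$ cancel and one obtains
\[
\langle\alpha^{(n-i)}\beta^{(i)},\Psi(\omega^{2n-2i})\rangle = (-1)^n(n-i)!\,(2n-2i-1)!!\,q(\omega,\omega)^{n-i}.
\]
Equating the two sides of the adjunction and cancelling the common nonzero factor $c_X^{-1}(-1)^n(2n-2i-1)!!\,q(\omega,\omega)^{n-i}$ yields $c = c_X(n-i)!$, as claimed.

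The conceptual steps — the adjunction characterizing $T$ and the monodromy reduction to a single scalar — are routine, so the main obstacle is purely the bookkeeping: the combinatorial coefficient of $\alpha^{(i)}\beta^{(n-i)}$, the permanent of the pairing matrix, the Fujiki factor in $\int_X\omega^{2n} = c_X(2n-1)!!\,q(\omega,\omega)^n$, and the precise sign and scaling in the statement that $\Psi$ is an isometry must all be tracked at once. The delicate point is verifying that the identity $\binom{n}{i}(2n-2i-1)!!\,(n-i)!/n! = (2n-2i-1)!!/i!$, combined with the exact isometry constant $(-1)^n c_X^{-1}$, makes every factorial and sign collapse to the clean value $c_X(n-i)!$.
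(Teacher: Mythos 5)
Your proof is correct: I verified the key numerics — the coefficient of $\alpha^{(i)}\beta^{(n-i)}$ in $e_\omega^{2n-2i}(\alpha^{(n)})$ is indeed $\binom{n}{i}\,(2n-2i-1)!!\,(n-i)!\,q(\omega,\omega)^{n-i}$ (equivalently $\binom{n}{n-i}(2n-2i)!/2^{n-i}$), the pairing $\langle\alpha^{(n-i)}\beta^{(i)},\alpha^{(i)}\beta^{(n-i)}\rangle$ is the permanent $(-1)^n(n-i)!\,i!$, and the isometry constant $(-1)^n c_X^{-1}$ is the right one for these degrees (the degree-dependent sign of the Mukai-type pairing on $\SH$ is $(-1)^{p-n}$ for $\SH^{2p}$, and $p=2i$ is even here, so it collapses to $(-1)^n$ uniformly in $i$; moreover the final value of $c$ is insensitive to whether the induced form on $\Sym^n\widetilde{H}$ is the permanent or the permanent divided by $n!$, since the normalization cancels between the two sides of your adjunction) — and everything combines to $c=c_X(n-i)!$. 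Note that the paper gives no proof of this statement, quoting it directly from \cite[Lemma 3.5]{beckmann21}, so there is no internal argument to compare against; your route — monodromy equivariance reducing $T(\alpha^{(n-i)}\beta^{(i)})$ to a single scalar multiple of $\fq_{2i}$, pinned down by pairing against $\Psi(\omega^{2n-2i})$ via the defining property of $\fq_{2i}$ — is essentially the standard computation underlying that reference.
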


\begin{Lem}\label{lem:projectionTwo}
For every $\gamma \in H^2(X,\Q)$ we have 
\[ 
T(\alpha^{(n-2)} \cdot \gamma^{(2)}) = (n-2)!(\gamma^2 - q(\gamma,\gamma)\sfq_2) \in \SH^4(X).
\]
\end{Lem}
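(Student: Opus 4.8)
The plan is to express the class $\alpha^{(n-2)}\gamma^{(2)}$ as a combination of an element already lying in the image of $\Psi$ and a multiple of $\alpha^{(n-1)}\beta$, and then to apply $T$ termwise, using that $T$ inverts $\Psi$ on the Verbitsky component together with Lemma \ref{lem:proj_alphabeta}. The point is that $\gamma^{(2)}$ is the symmetric square in $\Sym^n\widetilde{H}(X,\Q)$, whereas the target involves the cup square $\gamma^2\in\SH^4(X)$, and these are linked precisely by the formula defining $\Psi$.

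The central computation is the evaluation of $\Psi(\gamma^2)$. By definition $\Psi(\gamma^2)=e_\gamma^2(\alpha^{(n)}/n!)$, so I would compute $e_\gamma^2(\alpha^{(n)})$ by applying the derivation $e_\gamma$ twice. Since $e_\gamma(\alpha)=\gamma$ and $e_\gamma(\gamma)=q(\gamma,\gamma)\beta$, the Leibniz rule gives
\[
e_\gamma(\alpha^{(n)}) = n\,\alpha^{(n-1)}\gamma, \qquad e_\gamma^2(\alpha^{(n)}) = n(n-1)\,\alpha^{(n-2)}\gamma^{(2)} + n\,q(\gamma,\gamma)\,\alpha^{(n-1)}\beta.
\]
Dividing by $n!$ yields
\[
\Psi(\gamma^2) = \frac{1}{(n-2)!}\alpha^{(n-2)}\gamma^{(2)} + \frac{q(\gamma,\gamma)}{(n-1)!}\alpha^{(n-1)}\beta,
\]
which I rearrange to
\[
\alpha^{(n-2)}\gamma^{(2)} = (n-2)!\,\Psi(\gamma^2) - \frac{q(\gamma,\gamma)}{n-1}\,\alpha^{(n-1)}\beta.
\]

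Applying $T$ and using $T\circ\Psi=\id_{\SH(X)}$ on the first term, together with $T(\alpha^{(n-1)}\beta)=c_X(n-1)!\,\fq_2$ from Lemma \ref{lem:proj_alphabeta} (the case $i=1$) on the second, gives
\[
T(\alpha^{(n-2)}\gamma^{(2)}) = (n-2)!\,\gamma^2 - \frac{q(\gamma,\gamma)}{n-1}\,c_X(n-1)!\,\fq_2 = (n-2)!\bigl(\gamma^2 - c_X q(\gamma,\gamma)\fq_2\bigr),
\]
since $(n-1)!/(n-1)=(n-2)!$, which is the asserted identity. The only point requiring care is the relation $T\circ\Psi=\id_{\SH(X)}$: I would justify it by recalling that $T$ is the orthogonal projection onto $\im\Psi\cong\SH(X)$, so it restricts to the identity on the image, independently of the $c_X$-rescaling needed to make $\Psi$ a genuine isometry (which only affects the action of $T$ on the orthogonal complement). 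Everything else reduces to the routine bookkeeping of the derivation $e_\gamma$ above, so I do not expect a serious obstacle.
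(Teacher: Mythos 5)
Your proposal is correct and takes essentially the same route as the paper's own proof: both evaluate $\Psi(\gamma^2)=e_\gamma^2(\alpha^{(n)}/n!)$ by the Leibniz rule to get $\Psi(\gamma^2) = \frac{1}{(n-2)!}\alpha^{(n-2)}\gamma^{(2)} + \frac{q(\gamma,\gamma)}{(n-1)!}\alpha^{(n-1)}\beta$, then apply $T$ using $T\circ\Psi=\id_{\SH(X)}$ and Lemma \ref{lem:proj_alphabeta} with $i=1$. The only difference is that you make explicit the derivation bookkeeping and the justification that $T$ is a section of $\Psi$, both of which the paper states without elaboration.
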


\begin{proof}
By definition 
\[
\Psi(\gamma^2) = e_{\gamma} \cdot e_{\gamma} (\alpha^n/{n!}) = \frac{\alpha^{(n-2)} \cdot \gamma^{(2)}}{(n-2)!} +q(\gamma,\gamma)\frac{\alpha^{(n-1)} \cdot \beta}{(n-1)!}.
\] 
The map $\Psi$ is a section of $T$, so $T(\Psi(\gamma^2)) = \gamma^2.$ Substituting we get 
\begin{align*}
T(\alpha^{(n-2)} \cdot \gamma^{(2)}) &= (n-2)!\left(T(\Psi(\gamma^2)) - q(\gamma,\gamma)\frac{T(\alpha^{(n-1)} \cdot \beta)}{(n-1)!} \right) \\
&= (n-2)!(\gamma^2 - q(\gamma,\gamma)\sfq_2),
\end{align*}
where we used Lemma \ref{lem:proj_alphabeta} in the last equality. 
\end{proof}

\begin{Lem}
Let $X$ be a HK fourfold, and let $\lambda \in H^2(X,\C)$. Then
\[
\int_X{T(\lambda\beta)\mu} = c_Xq(\lambda,\mu),
\]
for every $\mu \in H^2(X,\C)$
\end{Lem}

\begin{proof}
By linearity, we can assume that $q(\lambda,\lambda) \neq 0$. By definition we have
\[
\Psi(\lambda^3) = e_{\lambda} \cdot e_{\lambda} \cdot e_{\lambda} \left(\frac{\alpha^{(2)}}{2}\right) = 3q(\lambda,\lambda)\lambda\beta. 
\]
Using that $\Psi$ is a section of $T$ we obtain $T(\lambda\beta) = \frac{\lambda^3}{3q(\lambda,\lambda)}$, which is easily seen to satisfy the thesis. 
\end{proof}

For any $\lambda \in H^2(X,\Q)$, we denote by $\lambda^{\vee} \in H^6(X,\Q)$ the class such that
\begin{equation}
    \int_X{\lambda^{\vee}\mu} = c_Xq(\lambda,\mu), \text{ for every  }  \mu \in H^2(X,\Q). 
\end{equation}
With this notation, the lemma above says that $T(\lambda\beta) = \lambda^{\vee}$.
Using this, if $\dim(X) = 4$ we can give the explicit form of the Mukai vector of an atomic object. 

\begin{Cor}\label{cor:fullMukaiVector}
Let $X$ be a \HK fourfold, and $E \in \Db(X)$ an atomic object with non-zero rank. Write
\[
\tilde{v}(E) = r\alpha + \lambda + s\beta.
\]
Then we have
\[
v(E)_{\SH} = r + \lambda + \frac{1}{2r}(\lambda^2 - \tilde{q}(\tilde{v}(E),\tilde{v}(E))\sfq_2) + \frac{s}{r}\lambda^{\vee} + \frac{s^2}{2r}\mathsf{q_4}.
\]
\end{Cor}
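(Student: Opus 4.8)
The plan is to specialize the cited proposition, which gives $T(\tilde{v}(E)^{(n)}) = c\cdot v(E)_{\SH}$ for some $c\in\Q$, to the case $n=2$ and evaluate both sides explicitly. First I would invoke Proposition \ref{prop:normalization} to write $\tilde{v}(E) = r\alpha + \lambda + s\beta$ with $r = r(E)\neq 0$ and $\lambda = c_1(E)$, and expand the symmetric square inside $\Sym^2\widetilde{H}(X,\Q)$:
\[
\tilde{v}(E)^{(2)} = r^2\alpha^{(2)} + 2r\,\alpha\lambda + \lambda^{(2)} + 2rs\,\alpha\beta + 2s\,\lambda\beta + s^2\beta^{(2)}.
\]
Applying $T$ term by term uses only results already available. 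Lemma \ref{lem:proj_alphabeta} gives $T(\alpha\beta) = c_X\fq_2$ (case $i=1$) and $T(\beta^{(2)}) = c_X\fq_4$ (case $i=2$); Lemma \ref{lem:projectionTwo} gives $T(\lambda^{(2)}) = \lambda^2 - c_Xq(\lambda,\lambda)\fq_2$; and the unlabeled lemma preceding this corollary gives $T(\lambda\beta) = \lambda^{\vee}$. For the remaining two terms I would use that $T$ is a section of $\Psi$: from $\Psi(1) = \alpha^{(n)}/n!$ and $\Psi(\lambda) = \alpha^{(n-1)}\lambda/(n-1)!$ one reads off $T(\alpha^{(2)}) = 2$ and $T(\alpha\lambda) = \lambda$ when $n=2$.

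Collecting these contributions gives
\[
T(\tilde{v}(E)^{(2)}) = 2r^2 + 2r\lambda + \lambda^2 + c_X\bigl(2rs - q(\lambda,\lambda)\bigr)\fq_2 + 2s\lambda^{\vee} + s^2c_X\fq_4.
\]
The key simplification is the direct computation $\tilde{q}(\tilde{v}(E),\tilde{v}(E)) = q(\lambda,\lambda) - 2rs$, immediate from $\alpha,\beta$ being isotropic, orthogonal to $H^2(X,\Q)$, and $\tilde{q}(\alpha,\beta) = -1$; this turns the $\fq_2$-coefficient into $-c_X\tilde{q}(\tilde{v}(E),\tilde{v}(E))$. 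I would also identify $\fq_4$ with the point class by evaluating its defining property at $i=n=2$, which yields $\int_X\fq_4 = 1$, hence $\fq_4 = \mathfrak{pt}$.

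It then remains to pin down $c$. Comparing degree-zero (rank) parts of $T(\tilde{v}(E)^{(2)}) = c\cdot v(E)_{\SH}$ does this at once: the left-hand side has degree-zero part $2r^2$, while, since $H^0(X)\subset\SH(X)$, the degree-zero part of $v(E)_{\SH}$ equals $\rk(E) = r$; thus $c = 2r$. Dividing by $2r$ produces exactly the stated expression. The one point demanding care — and the main thing beyond routine bookkeeping — is that the cited proposition controls only the Verbitsky projection $v(E)_{\SH}$, so the final identity $v(E) = v(E)_{\SH}$ must be justified: for atomic objects on a $\mathrm{K3}^{[2]}$-type fourfold the Mukai vector lies in the Verbitsky component, because the Chern and Todd corrections entering it are monodromy-invariant and hence land in $\SH(X)$, so the computed formula for $v(E)_{\SH}$ is indeed the formula for $v(E)$.
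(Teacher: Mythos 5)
Your proposal is correct and follows essentially the same route as the paper's own proof: expand $\tilde{v}(E)^{(2)}$ in $\Sym^2\widetilde{H}(X,\Q)$, apply $T$ termwise via Lemma \ref{lem:proj_alphabeta}, Lemma \ref{lem:projectionTwo} and the preceding lemma giving $T(\lambda\beta)=\lambda^{\vee}$, then divide by $2r$; your explicit verifications that $T(\alpha^{(2)})=2$ and $T(\alpha\lambda)=\lambda$ (via $T\circ\Psi=\mathrm{id}$), that $\fq_4=\mathfrak{pt}$ (evaluating the defining property at $i=n=2$), that $\tilde{q}(\tilde{v},\tilde{v})=q(\lambda,\lambda)-2rs$, and that the proportionality constant is $2r$ by comparing rank parts are exactly the bookkeeping the paper leaves implicit (the constant determination appears in the paper only inside the proof of Proposition \ref{prop:discriminant}, where $n!\,r^{n-1}v(F)=T(\tilde{v}(F)^{(n)})$ is deduced). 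One correction to your final paragraph, though: the justification you give for $v(E)=v(E)_{\SH}$ is not right as stated. The components of the Mukai vector are not monodromy invariant ($c_1(E)=\lambda$ already is not), so ``monodromy-invariant corrections land in $\SH(X)$'' is not a valid argument. The correct reason in the $\mathrm{K3}^{[2]}$ case is stronger and simpler: there the cohomology ring is generated by $H^2$, so $\SH(X)=H^*(X,\Q)$ and the projection is the identity. For a general \HK fourfold (e.g.\ of $\mathrm{Kum}_2$ type) the computation only yields $v(E)_{\SH}$, a point on which the paper's proof is equally silent, recording the $\mathrm{K3}^{[2]}$ fact only in the remark following Theorem \ref{thm:eulerCharacteristic} --- so your instinct to flag the step was sound even though the reason offered does not hold up.
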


\begin{proof}
The symmetric square of $\tilde{v}(E)$ is given by
\begin{equation}\label{eq:symmetric_square}
    \tilde{v}(E)^{(2)} = r^2\alpha^{(2)} + 2r\alpha\lambda + 2rs\alpha\beta + \lambda^{(2)} +2s\lambda\beta + s^2\beta^{(2)} \in \Sym^2 \wt{H}(X,\Q).
\end{equation}
Applying the results above, we obtain
\[
T(\tilde{v}(E)^{(2)}) = 2r^2 + 2r\lambda + 2rs\sfq_2 + (\lambda^2 - q(\lambda,\lambda)\sfq_2) + 2s\lambda^{\vee} + s^2\sfq_4. 
\]
Dividing by $2r$ and rearranging the terms we obtain the formula for the Mukai vector in the statement.
\end{proof}

\subsection{Discriminant}\label{sec:Discriminant}
An atomic torsion-free sheaf is modular by the arguments in \cite[Section 5]{beckmann22}. 
Using the computations above, we can give a direct proof of this fact, by computing the discriminant in terms of the extended Mukai vector. 
Bogomolov's inequality will then give an inequality for stable atomic sheaves which is similar to the one for $\mathrm{K3}$ surfaces.  

Let $X$ be a HK manifold of dimension $\dim(X) =2n$, and let $F$ be a torsion-free sheaf on $X$. 
Recall that the discriminant of $F$ is the class
\begin{equation*}\label{eq:discriminant}
    \Delta(F) \coloneqq -2r(F)\ch_2(F) + c_1(F)^2 \in H^4(X,\Q).
\end{equation*}

\begin{Defi}[O'Grady]\label{def:modular}
A torsion-free sheaf $F$ is modular if the projection $\Delta(F)_{\SH}$ on the Verbitsky component is a multiple of the class $\sfq_2$.
\end{Defi}

Define the number $r_X$ as in \cite[Section $3$]{beckmann21}; by Lemma 3.3 in {\it loc. cit.} we have
\begin{equation}\label{eq:DegreeTwoComponentSquareRootTodd}
    (\td_X)^{\frac{1}{2}}_{2,\SH} = r_X\sfq_2.
\end{equation}
Its values for the known deformation types are
\[
r_X=
\begin{cases}
\frac{n+3}{4} & \text{ for $\mathrm{K3^{[n]}}$ or OG10}, \\
\frac{n+1}{4} & \text{ for $\mathrm{Kum_n}$ or OG6}.
\end{cases}
\]

\begin{Prop}\label{prop:discriminant}
Let $F$ be an atomic torsion-free sheaf. Then $F$ is modular, and
\[
 \Delta(F)_{\SH} = (\tilde{q}(\tilde{v}(F),\tilde{v}(F)) + 2r_Xr(F)^2)\sfq_2. 
\]
\end{Prop}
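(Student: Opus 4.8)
The plan is to reduce the computation to the degree-four part of cohomology and to read off the $\SH$-projection of the Mukai vector directly from the extended Mukai vector. By Proposition~\ref{prop:normalization} I normalize $\tilde v(F) = r\alpha + \lambda + s\beta$ with $r = r(F)$ and $\lambda = c_1(F)$. Writing the Mukai vector as $v(F) = \ch(F)\sqrt{\td_X}$ and using that $c_1(X) = 0$, so that the degree-one part of $\sqrt{\td_X}$ vanishes, the $H^4$-component satisfies $v(F)_2 = \ch_2(F) + r(\sqrt{\td_X})_2$. Substituting $\ch_2(F)$ into the discriminant gives
\[
\Delta(F) = c_1(F)^2 - 2r\,\ch_2(F) = \lambda^2 - 2r\,v(F)_2 + 2r^2(\sqrt{\td_X})_2,
\]
and it suffices to project this identity onto the Verbitsky component $\SH(X)$.

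The heart of the matter is the $\SH$-projection of $v(F)_2$, which I obtain from atomicity. By the proposition of Beckmann and Markman recalled above, $v(F)_{\SH} = \kappa\,T(\tilde v(F)^{(n)})$ for some $\kappa \in \Q$, and since the projection preserves cohomological degree it is enough to compute the degree-four part of $T(\tilde v(F)^{(n)})$. I first fix $\kappa$ by matching the degree-zero (rank) parts: the only monomial of $\tilde v(F)^{(n)}$ that $T$ sends into $\SH^0$ is $r^n\alpha^{(n)}$, and $T(\alpha^{(n)}) = n!$ because $T$ is a section of $\Psi$ and $\Psi(1) = \alpha^{(n)}/n!$; hence $\kappa = 1/(n!\,r^{n-1})$. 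For the degree-four part I expand $\tilde v(F)^{(n)}$ and keep the two monomials that $T$ sends into $\SH^4(X)$, namely $n\,r^{n-1}s\,\alpha^{(n-1)}\beta$ and $\binom{n}{2}r^{n-2}\alpha^{(n-2)}\lambda^{(2)}$, applying Lemma~\ref{lem:proj_alphabeta} with $i = 1$ to the first and Lemma~\ref{lem:projectionTwo} to the second. Multiplying by $\kappa$ yields
\[
(v(F)_2)_{\SH} = \frac{1}{2r}\bigl(\lambda^2 - c_X q(\lambda,\lambda)\fq_2\bigr) + c_X s\,\fq_2,
\]
which I would cross-check against Corollary~\ref{cor:fullMukaiVector} in the case $n = 2$.

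Assembling the pieces, I substitute this expression into the projected discriminant. Since $\lambda = c_1(F) \in H^2(X)$ we have $\lambda^2 \in \SH^4(X)$, so the contribution $-2r \cdot \frac{1}{2r}\lambda^2 = -\lambda^2$ coming from $v(F)_2$ cancels exactly the $\lambda^2$ coming from $c_1(F)^2$. This cancellation is precisely the statement of modularity: the surviving class lies in $\Q\fq_2$. Projecting the remaining Todd term $2r^2(\sqrt{\td_X})_2$ by means of the normalization of $r_X$ recalled before the statement, and using $\tilde q(\tilde v(F),\tilde v(F)) = q(\lambda,\lambda) - 2rs$, I collect the coefficients of $\fq_2$ to obtain
\[
\Delta(F)_{\SH} = c_X\bigl(q(\lambda,\lambda) - 2rs + 2r_X r^2\bigr)\fq_2 = c_X\bigl(\tilde q(\tilde v(F),\tilde v(F)) + 2r_X r(F)^2\bigr)\fq_2.
\]

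The main obstacle is conceptual rather than computational: one has to recognize that the non-$\fq_2$ part of $(v(F)_2)_{\SH}$ is forced to equal $\frac{1}{2r}\lambda^2$, so that it cancels $c_1(F)^2$ and leaves a pure multiple of $\fq_2$ --- this is exactly what modularity asserts. Technically the two delicate points are the determination of the constant $\kappa$ through the rank normalization, where one must note that Lemma~\ref{lem:proj_alphabeta} is used only for $i \geq 1$ while the degree-zero value $T(\alpha^{(n)}) = n!$ is instead dictated by the section property $T\circ\Psi = \id$, and the careful bookkeeping of the two projection lemmas when extracting the degree-four part.
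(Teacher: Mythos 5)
Your proposal is correct and follows essentially the same route as the paper's proof: normalize $\tilde v(F)$ via Proposition~\ref{prop:normalization}, fix the proportionality constant $1/(n!\,r^{n-1})$ by matching ranks, extract the degree-four part of $T(\tilde v(F)^{(n)})$ from Lemmas~\ref{lem:proj_alphabeta} and~\ref{lem:projectionTwo}, and correct by the Todd term to relate $v_2(F)_{\SH}$ to $\ch_2(F)_{\SH}$ before substituting into the discriminant. The only differences are cosmetic --- you project the identity $\Delta(F) = \lambda^2 - 2r\,v(F)_2 + 2r^2(\sqrt{\td_X})_2$ directly rather than first isolating $\ch_2(F)_{\SH}$, and you are in fact slightly more careful than the paper in noting that the relevant normalization is the degree-four $\SH$-projection of $\sqrt{\td_X}$ and that $T(\alpha^{(n)}) = n!$ comes from the section property $T\circ\Psi = \id$ rather than from Lemma~\ref{lem:proj_alphabeta}, which is stated only for $i \geq 1$.
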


\begin{proof}
Taking the $n$-th symmetric power of $\tilde{v}(F)$ we get
\begin{align*}
  \tilde{v}(F)^{(n)} &= r(F)^{n}\alpha^{(n)} + nr(F)^{n-1}\alpha^{(n-1)}c_1(F) + \binom{n}{2}r(F)^{n-2}\alpha^{(n-2)}c_1(F)^{(2)} \\ &+ nr(F)^{n-1}s(F)\alpha^{(n-1)}\beta + \dots
\end{align*}
Using Lemma \ref{lem:proj_alphabeta} and \ref{lem:projectionTwo} we get
\begin{align*}
    T(\tilde{v}(F)^{(n)}) =& n!r(F)^n + n!r(F)^{n-1}c_1(F) + \binom{n}{2}r(F)^{n-2}(n-2)!(c_1(F)^2  \\
    & - q(c_1(F),c_1(F))\sfq_2) + n!r(F)^{n-1}s(F)\sfq_2 + \dots
\end{align*}
The projection onto $\SH(X)$ of the Mukai vector of $F$ is a rational multiple of this class. Since the rank is non-zero, we deduce that $n!r(F)^{n-1}v(F) = T(\tilde{v}(F)^{(n)})$. Dividing by $n!r(F)^{n-1}$ and comparing the terms of degree four, we get 
\begin{equation}
    v_2(F)_{\SH} = \frac{1}{2r(F)}(c_1(F)^2 -q(c_1(F),c_1(F))\sfq_2) + s(F)\sfq_2.
\end{equation}
On the other hand, by definition $v(F) = \ch(F) \cup (\td_X)^{\frac{1}{2}}$.
By \eqref{eq:DegreeTwoComponentSquareRootTodd} we get
\begin{align*}
    \ch_2(F)_{\SH} & =  v_2(F)_{\SH} - r_Xr(F)\sfq_2  \\
    &= \frac{1}{2r(F)}(c_1(F)^2 - q(c_1(F),c_1(F))\sfq_2) +\left( s(F) - r_Xr(F) \right)\sfq_2.
\end{align*}
Substituting $\ch_2(F)_{\SH}$ in the definition of the discriminant we obtain
\begin{align*}
 \Delta(F)_{\SH} &= (q(c_1(F),c_1(F))+2r_Xr(F)^2-2r(F)s(F))\sfq_2 \\
 & = (\tilde{q}(\tilde{v}(F),\tilde{v}(F)) + 2r_Xr(F)^2)\sfq_2.
\end{align*}
\end{proof}

\begin{Cor}\label{cor:Bogomolov}
If $F$ is an atomic torsion-free slope semistable sheaf, then 
\[
\tilde{q}(\tilde{v}(F),\tilde{v}(F)) + 2r_Xr(F)^2 \geq 0.
\]
\end{Cor}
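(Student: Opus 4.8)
The plan is to read the inequality straight off the formula for the discriminant in Proposition \ref{prop:discriminant} by pairing it against a suitable power of the polarization and invoking Bogomolov's inequality. Since Proposition \ref{prop:discriminant} expresses $\Delta(F)_{\SH}$ as an explicit scalar multiple of $\fq_2$, the whole statement reduces to checking that this scalar has the same sign as the Bogomolov integral $\int_X \Delta(F) \cup H^{2n-2}$; the sign then propagates to $\tilde q(\tilde v(F),\tilde v(F)) + 2 r_X r(F)^2$.

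First I would reduce the Bogomolov integral to the Verbitsky component. Decomposing $\Delta(F) = \Delta(F)_{\SH} + \Delta(F)^{\perp}$ into its Verbitsky part and a sum of the remaining LLV-isotypic pieces, I would use that $H^{2n-2} \in \SH^{4n-4}(X)$ (since $H \in H^2(X)$ and $\SH(X)$ is the subalgebra generated by $H^2(X)$), together with the LLV-invariance of the Poincar\'e pairing, under which distinct isotypic summands are orthogonal. This gives $\int_X \Delta(F)^{\perp} \cup H^{2n-2} = 0$ and hence
\[
\int_X \Delta(F) \cup H^{2n-2} = \int_X \Delta(F)_{\SH} \cup H^{2n-2}.
\]

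Next I would substitute Proposition \ref{prop:discriminant} and evaluate the right-hand side using the defining property of $\fq_2$ (the case $i = 1$ with $\omega = H$), obtaining
\[
\int_X \Delta(F) \cup H^{2n-2} = c_X\bigl(\tilde q(\tilde v(F),\tilde v(F)) + 2 r_X r(F)^2\bigr)\,(2n-3)!!\,q(H,H)^{n-1}.
\]
Because $H$ is ample we have $q(H,H) > 0$, the Fujiki constant satisfies $c_X > 0$, and $(2n-3)!! > 0$; thus the coefficient of $\tilde q(\tilde v(F),\tilde v(F)) + 2 r_X r(F)^2$ is strictly positive. Combining this with Bogomolov's inequality $\int_X \Delta(F) \cup H^{2n-2} \geq 0$ yields the claimed inequality.

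I expect the only delicate point to be the orthogonality step, namely that pairing $\Delta(F)$ against $H^{2n-2}$ detects exactly its Verbitsky projection $\Delta(F)_{\SH}$. This is precisely where the modularity condition of Definition \ref{def:modular} meshes with Bogomolov's inequality: the LLV-equivariance of the intersection form guarantees that the ample power $H^{2n-2}$, living in the Verbitsky component, annihilates the non-Verbitsky part of the discriminant. Everything else is a routine substitution of the formula from Proposition \ref{prop:discriminant} and positivity of the explicit constants.
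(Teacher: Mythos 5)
Your proof is correct and takes essentially the same route as the paper, which presents the corollary as an immediate consequence of Bogomolov's inequality together with the discriminant formula of Proposition \ref{prop:discriminant}; your write-up merely makes explicit the two steps the paper leaves tacit, namely that $H^{2n-2}\in\SH(X)$ pairs trivially with the non-Verbitsky part of $\Delta(F)$ by orthogonality of the LLV-isotypic decomposition, and that $\int_X \fq_2\cup H^{2n-2}=(2n-3)!!\,q(H,H)^{n-1}>0$ together with $c_X>0$ forces the sign. Note also that you correctly use the exponent $H^{2n-2}$, whereas the paper's displayed $H^{n-2}$ is evidently a typo since $\dim X = 2n$.
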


\begin{proof}
If $F$ is slope semistable for a polarization $H$ on $X$, Bogomolov's inequality gives
\[
\int_X{\Delta(F) \cup H^{n-2}} \geq 0.
\]
The thesis follows from the proposition above because $\int_X{H^{n-2}\sfq_2} = (2n-3)!!q(H)^{n-1} \geq 0$. 
\end{proof}

\begin{Ex}
If $X=S$ is a K3-surface, the inequality
\[ 
v(F)^2 \geq -2r(F)^2
\]
can be improved in the case of a stable sheaf $F$. 
Indeed, in this case it follows from Serre duality and Hirzebruch--Riemann--Roch that 
\[
v(F)^2 \geq -2.
\]
\end{Ex}

\begin{Rem}
It is possible that, similarly to the case of K3 surfaces, a stronger version of the inequality \ref{cor:Bogomolov} could hold. 
Equality should be related to $F$ being a $\P$-object. 
A precise formulation of this inequality seems to be related to understanding how to normalize the extended Mukai vector. 
For example, in \cite[Lemma 4.8 (iii)]{beckmann21} it is shown that if $F$ is in the orbit of the structure sheaf (in particular it is a $\P$-object),
there is a natural normalization for $\tilde{v}(F)$ for which the equality
\[
\tilde{q}(\tilde{v}(F),\tilde{v}(F)) = -2r_X
\]
holds.
\end{Rem}

\subsection{Euler characteristic}
To conclude this section we give a general formula for the Euler pairing of an atomic sheaf with itself, under the assumption that the Mukai vector is contained in the Verbitsky component. Recall that there is a bilinear product $b_{\SH}$ on $\SH(X)$, defined by 
\[
b_{\SH}(\lambda_1 \cdot \dots \lambda_m, \mu_1 \cdot \dots \mu_{2n-m}) \coloneqq (-1)^m \int_X{\lambda_1 \cup \dots \lambda_m \cup \mu_1 \cup \dots \cup \mu_{2n-m}}.
\]

\begin{Lem}
There exists a constant $C$ such that
\[
b_{\SH}\left(T(\tilde{v}^{(n)}),T(\tilde{v}^{(n)})\right) = C\tilde{q}(\tilde{v},\tilde{v})^n.
\]
\end{Lem}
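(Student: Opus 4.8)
The plan is to recognize the left-hand side as an $O(\widetilde{H}(X,\Q),\tilde{q})$-invariant polynomial in $\tilde{v}$ and then invoke the first fundamental theorem of invariant theory for the orthogonal group. Write $P \coloneqq \Psi \circ T$ for the orthogonal projection of $\Sym^n\widetilde{H}(X,\Q)$ onto the image $\Psi(\SH(X))$, taken with respect to the bilinear form $b$ induced by $\tilde{q}$. Since $\Psi$ is an isometry up to the nonzero factor $c_X$, the quantity $b_{\SH}(T(\tilde{v}^{(n)}),T(\tilde{v}^{(n)}))$ equals, up to that factor, the squared norm $b(P(\tilde{v}^{(n)}),P(\tilde{v}^{(n)}))$ of the projection of the Veronese image $\tilde{v}^{(n)}$. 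Regarded as a function of $\tilde{v} \in \widetilde{H}(X,\Q)$, this is a homogeneous polynomial $f(\tilde{v})$ of degree $2n$; I want to show it is invariant and then pin it down.

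The second step is equivariance. From Taelman's exact sequence, $\Psi(\SH(X))$ is exactly the kernel of the contraction $\Delta$, i.e. the harmonic component of $\Sym^n\widetilde{H}(X,\Q)$ for $\tilde{q}$. The LLV algebra $\fg(X) \cong \mathfrak{so}(\widetilde{H}(X,\Q),\tilde{q})$ acts on $\Sym^n\widetilde{H}(X,\Q)$ preserving $\tilde{q}$, and $\Psi$ is $\fg(X)$-equivariant; hence $\Psi(\SH(X))$ and its orthogonal complement are both subrepresentations, so the projection $P$ commutes with the entire action of $\mathfrak{so}(\widetilde{H}(X,\Q),\tilde{q})$. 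The Veronese map $\tilde{v} \mapsto \tilde{v}^{(n)}$ is equivariant, since $(g\tilde{v})^{(n)} = g\cdot \tilde{v}^{(n)}$, and $b$ is invariant, so for every $g$ in the special orthogonal group one gets $f(g\tilde{v}) = b(Pg\tilde{v}^{(n)},Pg\tilde{v}^{(n)}) = b(gP\tilde{v}^{(n)},gP\tilde{v}^{(n)}) = f(\tilde{v})$, using $Pg = gP$ and the $g$-invariance of $b$. Thus $f$ is $SO(\widetilde{H}(X,\Q),\tilde{q})$-invariant.

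The third step is invariant theory. Since $f$ depends on a single vector $\tilde{v}$, the first fundamental theorem for the special orthogonal group leaves no room for bracket (Pfaffian-type) invariants, as those require $\dim\widetilde{H}(X,\Q)$ independent vector arguments; hence the ring of $SO$-invariant polynomials in $\tilde{v}$ is generated by $\tilde{q}(\tilde{v},\tilde{v})$. As $f$ is homogeneous of degree $2n$, necessarily $f = C_0\,\tilde{q}(\tilde{v},\tilde{v})^n$ for a constant $C_0$, and therefore $b_{\SH}(T(\tilde{v}^{(n)}),T(\tilde{v}^{(n)})) = C\,\tilde{q}(\tilde{v},\tilde{v})^n$ for a suitable constant $C$.

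The \emph{main obstacle} is really just the identification of $\Psi(\SH(X))$ with the harmonic component, so that $T$ is the orthogonal projection onto an $\fg(X)$-stable summand and is therefore equivariant; this rests on Taelman's description of the sequence together with $\fg(X) \cong \mathfrak{so}(\widetilde{H}(X,\Q),\tilde{q})$. A minor subtlety worth recording is that the LLV action provides invariance only at the level of the Lie algebra $\mathfrak{so}$, but a polynomial annihilated by $\mathfrak{so}$ is invariant under the identity component of $SO$, which is all the invariant-theoretic input requires; and the hypothesis $\dim\widetilde{H}(X,\Q) > 1$, needed to rule out additional generators, always holds. The value of $C$ can be computed afterwards on a convenient vector, for instance $\tilde{v} = \alpha + \beta$, but this is not needed for the existence statement.
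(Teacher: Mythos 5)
Your proof is correct, and its first half coincides with the paper's: the paper likewise integrates the LLV action to an $\mathrm{SO}(\widetilde{H}(X,\C))$-action on $\SH(X,\C)$ and $\Sym^n\widetilde{H}(X,\C)$, observes that $T$ is equivariant as the orthogonal projection onto a subrepresentation, and concludes that both sides of the identity are $\mathrm{SO}$-invariant. Where you diverge is in how the invariance is exploited. The paper uses the group action concretely: after rescaling by the rank it acts by $\exp(e_{\lambda/r})$ to bring $\tilde{v}$ to the normal form $\alpha + s\beta$, then computes $T(\tilde{v}^{(n)}) = c_X\sum \frac{n!}{i!}s^i\fq_{2i}$ via Lemma \ref{lem:proj_alphabeta} and checks directly that the pairing is a constant times $s^n = \bigl(\tilde{q}(\tilde{v},\tilde{v})/(-2)\bigr)^n$. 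You instead observe that $b_{\SH}(T(\tilde{v}^{(n)}),T(\tilde{v}^{(n)}))$ is a homogeneous degree-$2n$ polynomial in $\tilde{v}$ invariant under $\mathrm{SO}(\widetilde{H}(X,\C))$, and invoke the first fundamental theorem: in a single vector argument the bracket invariants vanish identically (they are alternating, so they die on repeated arguments --- ``independent arguments'' is not quite the right phrasing, but the conclusion is right), so the invariant ring is generated by $\tilde{q}(\tilde{v},\tilde{v})$, forcing $f = C\,\tilde{q}(\tilde{v},\tilde{v})^n$. Your route is more conceptual and avoids the explicit projection formulas entirely; note also that since $\mathrm{SO}_m(\C)$ is connected, your Lie-algebra-to-group passage is automatic over $\C$, and the polynomial identity then descends to $\Q$. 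What the paper's computation buys in exchange is the explicit expansion of $T(\tilde{v}^{(n)})$ in the classes $\fq_{2i}$ and an explicit expression for $C$ in terms of the intersection numbers $\int_X \fq_{2i}\fq_{2n-2i}$; this is not strictly needed later, since in Theorem \ref{thm:eulerCharacteristic} the constant is pinned down by evaluating on the extended Mukai vector of $\cO_X$ --- a step your argument supports equally well.
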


\begin{proof}
Consider the action of the algebraic group $\mathrm{SO}(\widetilde{H}(X,\C))$ on $\SH(X,\C)$ and $\widetilde{H}(X,\C)$, obtained integrating the action of $\fg'_0 \cong \mathfrak{so}(\widetilde{H}(X,\C))$ as explained in \cite[Section 5]{taelman21}.
It acts by isometries with respect to the bilinear forms $b_{\SH}$ and $\tilde{q}$. Moreover, the projection $T : \Sym^n \widetilde{H}(X,\C) \to \SH(X,\C)$ is $\mathrm{SO}(\widetilde{H}(X,\C))$-equivariant, being the projection onto a subrepresentation. Hence, both sides of the equality we want to show are invariant under the action of $\mathrm{SO}(\widetilde{H}(X,\C))$. 

Write $\tilde{v} = r\alpha + \lambda + s\beta$. Dividing by the rank (we can because both sides are homogeneous of degree $n$) and acting by $\exp(e_{\lambda/r})$ we can assume that $\tilde{v} = \alpha + s\beta.$ By definition we have $\tilde{q}(\tilde{v},\tilde{v}) = -2s$.
Moreover, we have 
\[
\tilde{v}^{(n)} = \sum{\binom{n}{i}s^i\alpha^{(n-i)}\beta^{(i)}}.
\]
Applying \ref{lem:proj_alphabeta} we obtain
\[
T(\tilde{v}^{(n)}) = \sum{\frac{n!}{i!}s^i\sfq_{2i}}.
\]
By definition of the Mukai pairing $b_{\SH}$ we get 
\[
    b_{\SH}\left(T(\tilde{v}^{(n)}),T(\tilde{v}^{(n)})\right) =  \left(\sum{\frac{n!}{i!}\frac{n!}{(n-i)!}\int_X{\sfq_{2i}\sfq_{2n-2i}}}\right)s^n 
     = C \tilde{q}(\tilde{v},\tilde{v})^n,
\]
for some constant $C$ independent of $\tilde{v}$. 
\end{proof}

\begin{Thm}\label{thm:eulerCharacteristic}
Let $X$ be a \HK manifold of dimension $\dim(X) = 2n$. 
Let $E \in \Db(X)$ be an atomic object with non-zero rank $r$.
Assume that $v(E)_{\mathrm{SH}} = v(E)$. Then
\[
\chi(E,E) = (-1)^n (n+1)r^2 \left(\frac{\tilde{q}(\tilde{v}(E),\tilde{v}(E))}{2r_Xr^2}\right)^n.
\]
\end{Thm}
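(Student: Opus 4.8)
The plan is to reduce the Euler pairing to the self-pairing of the extended Mukai vector, exactly as on a K3 surface, and then to pin down one universal constant. First I would apply Hirzebruch--Riemann--Roch. Writing $v(E) = \ch(E)\sqrt{\td_X}$ and using that on a \HK manifold the Todd class is supported in degrees divisible by four, so that $\sqrt{\td_X}^{\vee} = \sqrt{\td_X}$, one obtains
\[
\chi(E,E) = \int_X \ch(E)^{\vee}\ch(E)\td_X = \int_X v(E)^{\vee}v(E).
\]
Since by hypothesis $v(E) = v(E)_{\SH}$ lies in the Verbitsky component, and the Mukai pairing restricts there to the form $b_{\SH}$, this reads $\chi(E,E) = b_{\SH}(v(E),v(E))$.

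Next I would rewrite $v(E)$ through the extended Mukai vector. The computation in the proof of Proposition \ref{prop:discriminant} expresses $v(E)$ as the normalization of $T(\tilde v(E)^{(n)})$ that fixes the rank in degree zero, namely $v(E) = \tfrac{1}{c_X\,n!\,r^{n-1}}T(\tilde v(E)^{(n)})$ with $r = r(E)$. Combined with the preceding Lemma, which gives $b_{\SH}(T(\tilde v^{(n)}),T(\tilde v^{(n)})) = C\,\tilde q(\tilde v,\tilde v)^n$ for a constant $C$ depending only on $X$ and $n$, this yields
\[
\chi(E,E) = \frac{C}{(c_X\,n!\,r^{n-1})^2}\,\tilde q(\tilde v(E),\tilde v(E))^n.
\]
So the entire statement comes down to evaluating $C$. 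I would exploit that $C\,\tilde q^n$ is $\mathrm{SO}(\widetilde{H}(X,\C))$-invariant, so its value may be read off at any convenient vector. Taking $\tilde v = \alpha + s\beta$, Lemma \ref{lem:proj_alphabeta} gives $T(\tilde v^{(n)}) = c_X\sum_i \tfrac{n!}{i!}s^i\fq_{2i}$, a class supported in degrees divisible by four, so the Mukai pairing reduces to the ordinary Poincaré pairing and dualization is trivial. Since $\tilde q(\tilde v,\tilde v) = -2s$, comparison of the $s^n$-coefficients reduces the whole theorem to the single identity
\[
\sum_{i=0}^n \frac{1}{i!\,(n-i)!}\int_X \fq_{2i}\fq_{2n-2i} = \frac{n+1}{r_X^{\,n}},
\]
equivalently to the generating-function form $\int_X\bigl(\sum_i \tfrac{t^i}{i!}\fq_{2i}\bigr)^2 = (1-t/r_X)^{-2}$; the factor $(-1)^n$ in the final formula is produced by the sign in $\tilde q = -2s$.

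The main obstacle is this last identity: it forces the purely cohomological pairings $\int_X\fq_{2i}\fq_{2n-2i}$ of the monodromy-invariant classes to assemble into the Todd-theoretic invariant $r_X$. I would establish it from the description of the $\fq$-classes in \cite{beckmann21} and the Fujiki relation defining them, computing the pairings through the $\Sym^n\widetilde{H}$-model already used above. A useful anchor is that $\sum_i\tfrac{t^i}{i!}\fq_{2i}$ is the Verbitsky projection of $v(\cO_X)$ at the normalization $t = r_X$, together with $\chi(\cO_X,\cO_X) = \sum_{j=0}^n h^{0,2j}(X) = n+1$; this both explains the emergence of $n+1$ and suggests calibrating $C$ against $\cO_X$. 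The remaining points are routine bookkeeping: the factor $c_X$ in the normalization of $v(E)$, fixed by matching ranks in degree zero via Lemma \ref{lem:proj_alphabeta} and cancelling against the $c_X^2$ hidden in the preceding Lemma's constant, and the verification that at $n=1$ the identity reads $2\int_S\fq_2 = 2$ and the formula specializes to the classical $\chi(E,E) = -\tilde q(\tilde v,\tilde v)$ on a K3 surface.
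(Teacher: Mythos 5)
Your proposal is correct and essentially reproduces the paper's argument: reduce $\chi(E,E)$ to $b_{\SH}(v(E),v(E))$ via Riemann--Roch and the hypothesis $v(E)_{\SH}=v(E)$, identify $v(E)$ with the rank-normalized projection of $T(\tilde v(E)^{(n)})$, invoke the invariance lemma $b_{\SH}(T(\tilde v^{(n)}),T(\tilde v^{(n)}))=C\,\tilde q(\tilde v,\tilde v)^n$, and pin down $C$ by evaluating at $\tilde v(\cO_X)=\alpha+r_X\beta$ with $\chi(\cO_X,\cO_X)=n+1$, which is exactly the paper's calibration (your $c_X$ bookkeeping cancels as you say). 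The intermediate identity $\sum_{i}\frac{1}{i!\,(n-i)!}\int_X \fq_{2i}\fq_{2n-2i}=(n+1)/r_X^{\,n}$ that you flag as the main obstacle need not be established independently: it is a consequence of, not a prerequisite for, the $\cO_X$-calibration you yourself propose, which is precisely how the paper avoids computing the pairings $\int_X\fq_{2i}\fq_{2n-2i}$ directly.
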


\begin{proof}
From the Riemann-Roch Theorem and the assumption that $v(E)_{\SH} = v(E)$ it follows that $\chi(E,E) = b_{\SH}(v(E),v(E))$. Since $T(\tilde{v}^{(n)}) = n!r^{n-1}v(E)$ we obtain from the previous lemma
\[
b_{\SH}(r^{n-1}v(E),r^{n-1}v(E)) = C\tilde{q}(\tilde{v},\tilde{v})^n
\]
for some constant $C$. Dividing both sides by $r^n$ we get 
\begin{equation}\label{eq:homogeneousChi}
    b_{\SH}\left(\frac{v(E)}{r},\frac{v(E)}{r}\right) = C\tilde{q}\left(\frac{\tilde{v}}{r},\frac{\tilde{v}}{r}\right)^n
\end{equation}
To compute the constant $C$, we substitute $\tilde{v}=\alpha + r_X\beta$, the extended Mukai vector of the structure sheaf. Since $r=1$ and $\chi(\cO_X,\cO_X) = n+1$ we get $C= (-1)^n\frac{n+1}{(2r_X)^n}$. 
Substituting $C$ into $\eqref{eq:homogeneousChi}$ and rearranging we get the result.
\end{proof}

\begin{Rem}
The assumption that the Mukai vector is contained in the Verbitsky component is satisfied for every atomic $E$ in the case of \HK varieties of $K3^{[2]}$-type. 
In this case, the formula becomes
\[
\chi(E,E) = 3 \cdot \left(\frac{\tilde{q}(\tilde{v}(E),\tilde{v}(E))}{2rr_X}\right)^2.
\]
Note that it gives a non-trivial integral constraint on the difference $\ext^2(E,E) - 2\ext^1(E,E)$. 
Finding an independent restriction on its possible values, for example in the form of a bound on $\ext^2(E,E)$, could be a path to investigate smoothness of the moduli space of semistable deformations of $E$. 
\end{Rem}

\section{Homological algebra of normal crossings Lagrangians}\label{sec:extGroups}
In this section we develop some homological algebra aimed towards the computation of the Ext groups $\Ext^k(\cO_Z,\cO_Z)$, where $Z=L \cup P' \subset M$ is the central fiber of the family of Theorem \ref{thm:collino}. 
The results in this section will be important both for computing the Ext groups $\Ext^k(F,F)$, and to study stability of the sheaf $F$ in Section \ref{sec:ssreduction}.

\subsection{\P-twist}\label{sec:P-twists}
We begin with some computations which we will prove useful later, especially in Section \ref{sec:ssreduction} to perform the semistable reduction.
Let $X$ be a \HK fourfold, and let $\cE$ and $\cF$ be two coherent sheaves on $X$. 
We make the following assumptions
\begin{enumerate}
    \item $\cE$ is a $\P$-object, that is $\Ext^*(\cE,\cE)$ is isomorphic as an algebra to $H^*(\P^2,\C)$.
    \item $\Ext^*(\cE,\cF) \cong \C[-1] \oplus \C[-3]$, and it is non-trivial as a module over $\Ext^*(\cE,\cE)$.
\end{enumerate}
In particular, there is a unique non-trivial extension
\[
0 \to \cF \to \cG \to \cE \to 0.
\]

To a $\P$-object $\cE$ one can associate an autoequivalence $P_{\mathcal{E}}$ of $\Db(X)$ called the $\P$-twist around $\cE$. 
Here we briefly recall the definition, for details see \cite[Section $2$]{huybrechts_thomas06}.
Let $h \in \Ext^2(\cE,\cE)$ a generator.
Define the map $\overline{h}^{\vee} : \Ext^{*-2}(\cE,\cF) \to \Ext^{*}(\cE,\cF)$ as the precomposition with $h$. 
The $\P$-twist around $\cE$ applied to $\cF$ can be described as
\begin{equation}
    P_{\cE}(\cF) = C\left(C(\Ext^{*-2}(\cE,\cF) \otimes \cE \xrightarrow{\overline{h}^{\vee}\cdot \id - \id \cdot h} \Ext^{*}(\cE,\cF) \otimes \cE) \to \cF \right).
\end{equation}
Here we used the notation $C(A \rightarrow B)$, to indicate the cone of the morphism $A \to B \in \Db(X)$. 

\begin{Rem}\label{rem:octahedral}
By the octahedral axiom one can see that $P_{\cE}(\cF)$ can be equivalently described as the cone of the map
\[
\Ext^*(\cE,\cF) \otimes \cE[-1] \to C(\Ext^*(\cE,\cF) \otimes \cE \to \cF).
\]
\end{Rem}

We want to compute the cohomology sheaves of the complex $P_{\cE}(\cF)$. We first compute the ones of the cone of the evaluation map. 

\begin{Lem}\label{lem:cohomologyEV}
Consider the evaluation map 
\[
\Ext^*(\cE,\cF) \otimes \cE \to \cF.
\]
The cohomology sheaves of its cone $C$ are 
\[
\cH^{k}(C) \cong
\begin{cases}
\cG & \text{for } k=0,\\ 
\cE & \text{for } k=2,\\
0 & \text{otherwise.}
\end{cases}
\]
\end{Lem}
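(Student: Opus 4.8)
The plan is to compute the cohomology of $C$ by building it up in two stages from the two summands of the source complex, so that the identification of $\cH^0(C)$ with the nontrivial extension $\cG$ comes for free. First I would record that, since $\Ext^*(\cE,\cF)\cong\C[-1]\oplus\C[-3]$, the source of the evaluation map is
\[
\Ext^*(\cE,\cF)\otimes\cE\cong\cE[-1]\oplus\cE[-3],
\]
and the evaluation splits as $\mathrm{ev}=\mathrm{ev}_1\oplus\mathrm{ev}_3$, where $\mathrm{ev}_j\in\Hom(\cE[-j],\cF)=\Ext^j(\cE,\cF)=\C$ is the tautological generator: the degree-$j$ graded piece of the universal (adjunction) evaluation corresponds to the identity of the one-dimensional space $\Ext^j(\cE,\cF)$, hence is nonzero.

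\emph{Step A} (cone off the degree-one part). Set $C_1:=\Cone(\cE[-1]\xrightarrow{\mathrm{ev}_1}\cF)$. The long exact sequence of cohomology sheaves of the triangle $\cE[-1]\to\cF\to C_1\to\cE$ collapses to a single short exact sequence
\[
0\to\cF\to\cH^0(C_1)\to\cE\to0,
\]
all other $\cH^k(C_1)$ vanishing. Its extension class is precisely $\mathrm{ev}_1$, a nonzero element of the one-dimensional space $\Ext^1(\cE,\cF)$; since every nontrivial extension of $\cE$ by $\cF$ is isomorphic to $\cG$, I conclude $C_1\cong\cG$ concentrated in degree zero.

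\emph{Step B} (cone off the remaining degree-three part). Applying the octahedral axiom to the composite $\cE[-1]\hookrightarrow\Ext^*(\cE,\cF)\otimes\cE\xrightarrow{\mathrm{ev}}\cF$, whose composite is $\mathrm{ev}_1$, whose first cone is $\cE[-3]$, and whose two outer cones are $C_1$ and $C$, yields a distinguished triangle
\[
\cE[-3]\to C_1\to C\to\cE[-2].
\]
Feeding $C_1\cong\cG$ into the long exact sequence of this triangle gives $\cH^0(C)\cong\cG$, an isomorphism $\cH^2(C)\cong\cE$ coming from the connecting map $\cH^2(C)\hookrightarrow\cH^3(\cE[-3])=\cE$, and vanishing in all other degrees, which is exactly the claim.

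The one genuinely delicate point is the identification in Step A of the middle term with $\cG$ rather than with the split extension $\cE\oplus\cF$; this is where the nonvanishing of $\mathrm{ev}_1$ is essential, and I expect it to be the main thing to get right. It hinges on recognizing the evaluation map as the canonical adjunction map, so that its degree-one component is the tautological nonzero class in $\Ext^1(\cE,\cF)$. The remaining bookkeeping, namely the two long exact sequences, is routine once the cohomologies of $\cE[-1]\oplus\cE[-3]$ and of $\cF$ are in hand. An alternative one-shot route is to take the long exact sequence of $\Ext^*(\cE,\cF)\otimes\cE\to\cF\to C$ directly: it produces the same table, but then the degree-zero extension class must be argued separately, which is precisely what the two-step coning makes transparent.
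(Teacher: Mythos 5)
Your proposal is correct and takes essentially the same approach as the paper: the paper applies the long exact sequence of cohomology sheaves to the defining triangle $\Ext^*(\cE,\cF)\otimes\cE\to\cF\to C$ in one shot, obtaining $0\to\cF\to\cH^0(C)\to\cE\to 0$ and $\cH^2(C)\cong\cE$, and then observes that the first sequence is induced by the evaluation $\Ext^1(\cE,\cF)\otimes\cE\to\cF$, hence is nonsplit, so $\cH^0(C)\cong\cG$. Your two-step coning via the octahedral axiom is just a more explicit packaging of that same extension-class identification, and indeed the ``one-shot route'' you mention as an alternative is precisely the paper's proof.
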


\begin{proof}
The long exact sequence in cohomology gives the two sequences
\begin{align*}
    0 &\to \cF \to \cH^0(C) \to \cE \to 0, \\
    0 &\to \cH^{2}(C) \to \cE \to 0.
\end{align*}
The rest of the long exact sequence shows that there is no cohomology in degrees different from $0$ and $2$. 
The first sequence is induced by the evaluation map $\Ext^1(\cE,\cF) \otimes \cE \to \cF$. 
Therefore it is not split, and $\cH^{0}(C) \cong \cG$. 
\end{proof}

\begin{Prop}\label{prop:cohomology_Ptwist}
The cohomology sheaves of $P_{\cE}(\cF)$ are given by
\[
\cH^{k}(P_{\cE}(\cF)) \cong
\begin{cases}
\cG & \text{for } k=0,\\
\cE & \text{for } k=3.
\end{cases}
\]
In particular, there is a distinguished triangle
\[
\cG \to P_{\cE}(\cF) \to \cE[-3].
\]
\end{Prop}

\begin{proof}
Consider the distinguished triangle 
\[
\Ext^*(\cE,\cF) \otimes \cE[-1] \to C(\Ext^*(\cE,\cF) \otimes \cE \to \cF) \to P_{\cE}(\cF)
\]
of Remark \ref{rem:octahedral}. Applying the long exact sequence of cohomology sheaves and Lemma \ref{lem:cohomologyEV} we get the exact sequences
\begin{align*}
0 &\to \cG \to \cH^0(P_{\cE}(\cF)) \to 0,\\
0 &\to \cH^1(P_{\cE}(\cF)) \to \cE \to \cE \to \cH^2(P_{\cE}(\cF)) \to 0, \\ 
0 &\to \cH^3(P_{\cE}(\cF)) \to \cE \to 0.
\end{align*}
If we check that the middle map $\cE \to \cE$ in the second sequence is the identity we are done. By definition it is induced in $\cH^2$ by the map
\[ 
\Ext^*(\cE,\cF) \otimes \cE[-1] \to C,
\]
which in turn is obtained from the octahedral axiom, composed with the isomorphism in Lemma \ref{lem:cohomologyEV}. Chasing the definitions and the commutativity in the octahedral axiom one sees that the desired map is induced in $\cH^2$ by the map
\[
H[-1] : \Ext^*(\cE,\cF) \otimes \cE[-1] \to \Ext^*(\cE,\cF) \otimes \cE[1],
\]
described explicitly as 
% https://q.uiver.app/?q=WzAsNSxbMCwwLCJcXGNFWy0yXSJdLFsxLDAsIlxcY0UiXSxbMCwxLCJcXGNFWy00XSJdLFsxLDEsIlxcY0VbLTJdIl0sWzAsM10sWzAsMSwiaCJdLFsyLDMsImhbLTJdIl0sWzAsMywiXFxtYXRocm17aWR9Il0sWzAsMiwiXFxiaWdvcGx1cyIsMix7InN0eWxlIjp7ImJvZHkiOnsibmFtZSI6Im5vbmUifSwiaGVhZCI6eyJuYW1lIjoibm9uZSJ9fX1dLFsxLDMsIlxcYmlnb3BsdXMiLDIseyJzdHlsZSI6eyJib2R5Ijp7Im5hbWUiOiJub25lIn0sImhlYWQiOnsibmFtZSI6Im5vbmUifX19XV0=
\[\begin{tikzcd}
	{\cE[-2]} & \cE \\
	{\cE[-4]} & {\cE[-2]} 
	\arrow["h", from=1-1, to=1-2]
	\arrow["{h[-2]}", from=2-1, to=2-2]
	\arrow["{\mathrm{id}}", from=1-1, to=2-2]
	\arrow["\oplus"', draw=none, from=1-1, to=2-1]
	\arrow["\oplus"', draw=none, from=1-2, to=2-2]
\end{tikzcd}\] 
in the proof of \cite[Lemma 2.5]{huybrechts_thomas06}. 
From this description it is clear that the induced map in $\cH^2$ is the identity. 
\end{proof}

\begin{Cor}\label{cor:ssReduction}
The object $P^{-1}_{\cE}(\cG)$ sits in a distinguished triangle 
\[
\cE \to P^{-1}_{\cE}(\cG) \to \cF.
\]
\end{Cor}

\begin{proof}
From \cite[Lemma $2.5$]{huybrechts_thomas06} we see that $P_{\cE}(\cE) \cong \cE[-4]$. Applying the equivalence $P^{-1}_{\cE}$ to the distinguished triangle 
\[
\cG \to P_{\cE}(\cF) \to \cE[-3]
\]
of Proposition \ref{prop:cohomology_Ptwist} we obtain
\[
P^{-1}_{\cE}(\cG) \to \cF \to \cE[1].
\]
Rotating this triangle gives the thesis. 
\end{proof}

\begin{Cor}\label{cor:vanishings}
If $\cE$ and $\cF$ are as above, we have 
\[
\Ext^{k}(\cE,\cG) \cong
\begin{cases}
0 & \text{if } k \neq 4, \\
\C & \text{if } k = 4.
\end{cases}
\]
\end{Cor}

\begin{proof}
Setting $\cG' := P_{\cE}^{-1}(\cG)$ we get 
\begin{align*}
    \Ext^{k}(\cE,\cG) &= \Ext^k(\cE,P_{\cE}(\cG')) = \Ext^k(\cE[4],\cG') \\ & = \Ext^{k-4}(\cE,\cG').
\end{align*}
Both objects $\cG$ and $\cG'$ are sheaves, so the ext groups above vanish for $k \neq 4$. For $k=4$, the exact sequence
\[
\Ext^4(\cE,\cE) \to \Ext^4(\cE,\cG) \to \Ext^4(\cE,\cF) = 0,
\]
shows that it is at most one dimensional. It is non-zero, because of the map $\cG \to \cE$, so it is one dimensional. 
\end{proof}

\subsection{Normal crossings Lagrangians}
We consider a normal crossings Lagrangian subvariety 
\[
Z = Z_1 \cup Z_2 \subset X
\]
in a \HK variety of dimension $2n$. 
That is $Z_1$ and $Z_2$ are smooth Lagrangians, and their scheme theoretic intersection $W := Z_1 \cap Z_2$ is smooth of dimension $n-1$; in particular 
\[
T_{Z_1}|_W \cap T_{Z_2}|_W = T_W.
\]

\begin{Rem}\label{Rem:sympdual}
Let $\sigma_X$ denote the symplectic form on $X$. Since $T_W \subset T_{Z_i}|_W$ and $Z_i$ is Lagrangian, we have
\[
\sigma_X(v,w) = 0 \text{ for every } v \in T_{Z_i} \text{ and } w \in T_W.
\]
The sum $T_{Z_1}|_W + T_{Z_2}|_W$ is a subbundle of $T_X|_W$ of rank $n+1$, so it is the symplectic orthogonal to $T_W$. 
\end{Rem}

The following result was shared by E. Markman through personal communication with the author.

\begin{Lem}[Markman]\label{lem:normalsAreDual}
The normal bundle $N_{W/Z_1}$ is dual to $N_{W/Z_2}$.  
\end{Lem}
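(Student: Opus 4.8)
The plan is to construct a perfect pairing between the two normal bundles directly out of the symplectic form $\sigma_X$. First I would record the numerology: since $Z_1$ and $Z_2$ are Lagrangian we have $\dim Z_i = n$, while $\dim W = n-1$, so both $N_{W/Z_1} = (T_{Z_1}|_W)/T_W$ and $N_{W/Z_2} = (T_{Z_2}|_W)/T_W$ are line bundles on $W$. It therefore suffices to produce an $\cO_W$-bilinear pairing $N_{W/Z_1} \otimes N_{W/Z_2} \to \cO_W$ that is non-degenerate at every point, because such a pairing induces an isomorphism $N_{W/Z_1} \cong N_{W/Z_2}^\vee$, which is exactly the asserted duality.

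Next I would define the pairing. Given local sections $n_1$, $n_2$ of $N_{W/Z_1}$, $N_{W/Z_2}$, choose lifts $v_1 \in T_{Z_1}|_W$ and $v_2 \in T_{Z_2}|_W$ and set $\langle n_1, n_2 \rangle := \sigma_X(v_1, v_2)$. Well-definedness is precisely the Lagrangian condition: replacing $v_1$ by $v_1 + w$ with $w$ a section of $T_W \subset T_{Z_2}|_W$ changes the value by $\sigma_X(w, v_2)$, which vanishes because $w, v_2 \in T_{Z_2}|_W$ and $Z_2$ is Lagrangian; symmetrically, altering $v_2$ by a section of $T_W \subset T_{Z_1}|_W$ changes nothing since $Z_1$ is Lagrangian. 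As $\sigma_X$ is $\cO_X$-bilinear, this descends to a genuine map of sheaves $N_{W/Z_1} \otimes N_{W/Z_2} \to \cO_W$.

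Then I would check non-degeneracy fibrewise. Fix $w \in W$ and suppose $\sigma_X(v_1, v_2) = 0$ for every $v_2 \in T_{Z_2, w}$. Since $Z_2$ is Lagrangian, $T_{Z_2,w}$ is maximal isotropic, hence equal to its own symplectic orthogonal, forcing $v_1 \in T_{Z_2,w}$; combined with $v_1 \in T_{Z_1,w}$ and the normal-crossings identity $T_{Z_1,w} \cap T_{Z_2,w} = T_{W,w}$ recorded in Remark \ref{Rem:sympdual}, this yields $n_1 = 0$. Because both bundles have rank one, non-degeneracy in the first variable at each point suffices, and the pairing gives the duality.

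The only genuinely geometric input is the transversality $T_{Z_1}|_W \cap T_{Z_2}|_W = T_W$ coming from the normal-crossings hypothesis, which is already isolated in Remark \ref{Rem:sympdual}; everything else is fibrewise symplectic linear algebra, so I do not expect a serious obstacle. The one point warranting care is the passage from pointwise non-degeneracy to an isomorphism of line bundles, which is legitimate precisely because both normal sheaves are locally free of rank one on the smooth $W$, so a nowhere-vanishing map between them is automatically an isomorphism.
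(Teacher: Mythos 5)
Your proof is correct, and it takes a genuinely more direct route than the paper's. You construct the pairing $N_{W/Z_1} \otimes N_{W/Z_2} \to \cO_W$, $(n_1,n_2) \mapsto \sigma_X(v_1,v_2)$, check well-definedness from the two Lagrangian conditions, and obtain pointwise non-degeneracy from the maximal isotropy of $T_{Z_2,w}$ together with the transversality $T_{Z_1}|_W \cap T_{Z_2}|_W = T_W$ (a minor citation point: this identity is recorded in the definition of a normal crossings Lagrangian just before Remark \ref{Rem:sympdual}, not in the remark itself, though the remark relies on it). The paper argues differently: it runs a nine-lemma diagram to identify $N_{W/Z_1} \oplus N_{W/Z_2}$ with $(T_{Z_1}|_W + T_{Z_2}|_W)/T_W$, invokes Remark \ref{Rem:sympdual} to recognize this quotient as the symplectic reduction $(T_W)^{\perp}/T_W$, a rank-two symplectic bundle with trivial determinant, and concludes $N_{W/Z_1} \otimes N_{W/Z_2} \cong \bigwedge\nolimits^2(N_{W/Z_1} \oplus N_{W/Z_2}) \cong \cO$, which for line bundles is exactly the asserted duality. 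The geometric input is the same in both arguments (the symplectic form plus transversality), but the packaging differs: your fibrewise argument is more elementary, never needs the nine lemma, and has the advantage of exhibiting a \emph{canonical} isomorphism $N_{W/Z_1} \cong N_{W/Z_2}^{\vee}$ induced by $\sigma_X$ --- essentially the identification used again in Lemma \ref{lem:symplecticity} --- whereas the paper's determinant trick avoids any non-degeneracy check and yields as a by-product the identification of $N_{W/Z_1} \oplus N_{W/Z_2}$ with the reduction bundle, which fits into the diagrammatic framework ($\tilde{N}$ and the spectral sequence of Theorem \ref{Thm:spectralSequence}) exploited in the rest of the section.
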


\begin{proof}
Consider the following diagram
% https://q.uiver.app/?q=WzAsMTcsWzEsMSwiVF9XIl0sWzEsMiwiVF9XIFxcb3BsdXMgVF9XIl0sWzEsMywiVF9XIl0sWzIsMSwiVF97Wl8xfXxfVyBcXGNhcCBUX3taXzJ9fF9XIl0sWzIsMiwiVF97Wl8xfXxfVyBcXG9wbHVzIFRfe1pfMn18X1ciXSxbMiwzLCJUX3taXzF9fF9XICsgVF97Wl8yfXxfVyJdLFszLDIsIk5fe1cvWl8xfSBcXG9wbHVzIE5fe1cvWl8yfSJdLFszLDMsIihUX3taXzF9fF9XICsgVF97Wl8yfXxfVykvVF9XIl0sWzEsMCwiMCJdLFsxLDQsIjAiXSxbMiwwLCIwIl0sWzIsNCwiMCJdLFswLDIsIjAiXSxbMCwzLCIwIl0sWzQsMywiMCJdLFszLDRdLFs0LDIsIjAiXSxbMCwxXSxbMSwyXSxbMiw1XSxbMSw0XSxbMCwzLCJcXGNvbmciXSxbMyw0XSxbNCw1XSxbNSw3XSxbNiw3LCJcXGNvbmciXSxbNCw2XSxbOCwwXSxbMiw5XSxbNSwxMV0sWzEwLDNdLFsxMiwxXSxbMTMsMl0sWzcsMTRdLFs2LDE2XV0=
\[\begin{tikzcd}
	& 0 & 0 \\
	& {T_W} & {T_{Z_1}|_W \cap T_{Z_2}|_W} \\
	0 & {T_W \oplus T_W} & {T_{Z_1}|_W \oplus T_{Z_2}|_W} & {N_{W/Z_1} \oplus N_{W/Z_2}} & 0 \\
	0 & {T_W} & {T_{Z_1}|_W + T_{Z_2}|_W} & {(T_{Z_1}|_W + T_{Z_2}|_W)/T_W} & 0 \\
	& 0 & 0 & {}
	\arrow[from=2-2, to=3-2]
	\arrow[from=3-2, to=4-2]
	\arrow[from=4-2, to=4-3]
	\arrow[from=3-2, to=3-3]
	\arrow["\cong", from=2-2, to=2-3]
	\arrow[from=2-3, to=3-3]
	\arrow[from=3-3, to=4-3]
	\arrow[from=4-3, to=4-4]
	\arrow["\cong", from=3-4, to=4-4]
	\arrow[from=3-3, to=3-4]
	\arrow[from=1-2, to=2-2]
	\arrow[from=4-2, to=5-2]
	\arrow[from=4-3, to=5-3]
	\arrow[from=1-3, to=2-3]
	\arrow[from=3-1, to=3-2]
	\arrow[from=4-1, to=4-2]
	\arrow[from=4-4, to=4-5]
	\arrow[from=3-4, to=3-5]
\end{tikzcd}\]
The nine lemma implies that the right vertical map is an isomorphism. 
From the previous remark we see that 
\[
(T_{Z_1}|_W + T_{Z_2}|_W)/T_W \cong (T_W)^{\perp}/T_W,
\]
which is a symplectic rank two bundle, in particular it has trivial determinant. We conclude that
\[
N_{W/Z_1} \otimes N_{W/Z_2} \cong \bigwedge\nolimits^2(N_{W/Z_1} \oplus N_{W/Z_2}) \cong \cO_X.
\]
\end{proof}

We define the vector bundle
\[
\tilde{N}:=T_X|_W/(T_{Z_1}|_W + T_{Z_2}|_W),
\]
following the notation in the Appendix of \cite{cks_2003}.
A diagram chase gives the following.

\begin{Lem}\label{lem:symplecticity}
There is an isomorphism of short exact sequences 
% https://q.uiver.app/?q=WzAsMTAsWzEsMCwiTl97Vy9aXzF9Il0sWzIsMCwiTl97Wl8yL1h9fF9XIl0sWzMsMCwiXFx0aWxkZXtOfSJdLFsxLDEsIk5ee1xcdmVlfV97Vy9aXzJ9Il0sWzIsMSwiXFxPbWVnYV97Wl8yfXxfVyJdLFszLDEsIlxcT21lZ2FfVyJdLFswLDEsIjAiXSxbNCwxLCIwIl0sWzAsMCwiMCJdLFs0LDAsIjAiXSxbNiwzXSxbMyw0XSxbNCw1XSxbNSw3XSxbOCwwXSxbMCwxXSxbMSwyXSxbMiw5XSxbMCwzLCJcXGNvbmciXSxbMSw0LCJcXGNvbmciXSxbMiw1LCJcXGNvbmciXV0=
\[\begin{tikzcd}
	0 & {N_{W/Z_1}} & {N_{Z_2/X}|_W} & {\tilde{N}} & 0 \\
	0 & {N^{\vee}_{W/Z_2}} & {\Omega_{Z_2}|_W} & {\Omega_W} & 0
	\arrow[from=2-1, to=2-2]
	\arrow[from=2-2, to=2-3]
	\arrow[from=2-3, to=2-4]
	\arrow[from=2-4, to=2-5]
	\arrow[from=1-1, to=1-2]
	\arrow[from=1-2, to=1-3]
	\arrow[from=1-3, to=1-4]
	\arrow[from=1-4, to=1-5]
	\arrow["\cong", from=1-2, to=2-2]
	\arrow["\cong", from=1-3, to=2-3]
	\arrow["\cong", from=1-4, to=2-4]
\end{tikzcd}\]
\end{Lem}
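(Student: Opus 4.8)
The plan is to realize both rows as the two nontrivial quotients of a single two-step filtration, and to produce all three vertical isomorphisms at once from the symplectic form. Let $\phi \colon T_X|_W \to \Omega_X|_W$ be the isomorphism $v \mapsto \sigma_X(v,\,\cdot\,)$. First I would record the nested subbundles
\[
T_{Z_2}|_W \ \subset\ T_{Z_1}|_W + T_{Z_2}|_W \ \subset\ T_X|_W,
\]
and observe that the top row is exactly the associated quotient sequence
\[
0 \to (T_{Z_1}|_W+T_{Z_2}|_W)/T_{Z_2}|_W \to T_X|_W/T_{Z_2}|_W \to T_X|_W/(T_{Z_1}|_W+T_{Z_2}|_W) \to 0,
\]
once we use the identifications $(T_{Z_1}|_W+T_{Z_2}|_W)/T_{Z_2}|_W \cong T_{Z_1}|_W/T_W = N_{W/Z_1}$ (which relies on the normal crossings hypothesis $T_{Z_1}|_W \cap T_{Z_2}|_W = T_W$), together with $T_X|_W/T_{Z_2}|_W = N_{Z_2/X}|_W$ and $T_X|_W/(T_{Z_1}|_W+T_{Z_2}|_W) = \tilde{N}$.

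Next I would show that $\phi$ carries this flag isomorphically onto the flag
\[
N^{\vee}_{Z_2/X}|_W \ \subset\ N^{\vee}_{W/X} \ \subset\ \Omega_X|_W,
\]
whose associated quotient sequence is the bottom row. Indeed, since $Z_2$ is Lagrangian, $\sigma_X(v,\,\cdot\,)$ annihilates $T_{Z_2}$ for every $v \in T_{Z_2}$, so a rank count gives $\phi(T_{Z_2}|_W) = N^{\vee}_{Z_2/X}|_W$; and because $T_{Z_1}|_W+T_{Z_2}|_W = (T_W)^{\perp}$ by Remark \ref{Rem:sympdual}, a covector $\phi(v)$ kills $T_W$ precisely when $v \in (T_W)^{\perp}$, whence $\phi(T_{Z_1}|_W+T_{Z_2}|_W) = N^{\vee}_{W/X}$. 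The induced maps on the three subquotients are then the middle isomorphism $N_{Z_2/X}|_W \cong \Omega_{Z_2}|_W$, the right isomorphism $\tilde{N} \cong \Omega_W$, and the left isomorphism $N_{W/Z_1} \cong N^{\vee}_{W/Z_2}$, the last via the conormal sequence $0 \to N^{\vee}_{Z_2/X}|_W \to N^{\vee}_{W/X} \to N^{\vee}_{W/Z_2} \to 0$ of the tower $W \subset Z_2 \subset X$; this recovers Lemma \ref{lem:normalsAreDual}.

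Since all three vertical arrows are induced by the single filtered isomorphism $\phi$, commutativity of the diagram is automatic, and exactness of the bottom row is nothing but the conormal sequence of $W \subset Z_2$. The only points demanding care — and the closest thing to an obstacle — are verifying that $\phi$ sends the smaller flag step onto the smaller conormal step (the rank counts together with the Lagrangian vanishing) and that the resulting left vertical map really is the duality of Lemma \ref{lem:normalsAreDual} and not merely \emph{some} isomorphism; both reduce to unwinding the definition of the conormal bundles as the images under $\phi$ of the symplectic orthogonals.
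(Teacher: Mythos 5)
Your proof is correct and is essentially the argument the paper intends: the paper gives no details beyond ``a diagram chase,'' and your chase via the single filtered isomorphism $\phi=\sigma_X(\cdot,-)$, carrying the flag $T_{Z_2}|_W \subset T_{Z_1}|_W+T_{Z_2}|_W=(T_W)^{\perp} \subset T_X|_W$ onto $N^{\vee}_{Z_2/X}|_W \subset N^{\vee}_{W/X} \subset \Omega_X|_W$, uses exactly the symplectic-orthogonality input supplied by Remark \ref{Rem:sympdual}. Packaging all three vertical arrows as the graded pieces of one filtered isomorphism is a clean way to make commutativity automatic and to identify the left map with the pairing underlying Lemma \ref{lem:normalsAreDual}.
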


\begin{proof}
The first short exact sequence is given by
\[
0 \to (T_{Z_1}|_W + T_{Z_2}|_W)/T_{Z_2}|_W \to T_X|_W/(T_{Z_2}|_W) \to T_X|_W/(T_{Z_1}|_W + T_{Z_2}|_W) \to 0,
\]
and noting that $(T_{Z_1}|_W + T_{Z_2}|_W)/T_{Z_2}|_W \cong T_{Z_1}|_W / T_W$.
The central vertical map in the diagram is induced by the restriction of the isomorphism $\sigma_X : T_X \cong \Omega_X$. 
The composition
\[
T_{Z_1} + T_{Z_2} \to T_X \cong \Omega_{X} \to \Omega_W
\]
vanishes by Remark \ref{Rem:sympdual}. 
So the central map factors to give the diagram in the statement. 
\end{proof}

Denote by $j_i : Z_i \hookrightarrow X$ the embeddings. If $E_1$ and $E_2$ are locally free sheaves on $Z_1$ and $Z_2$ we can compute the $\mathrm{Ext}$ groups $\Ext^{k}(j_{1,*}E_1,j_{2,*}E_2)$ using the following spectral sequence. 

\begin{Thm}[{{\cite[Theorem A.1]{cks_2003}}}]\label{Thm:spectralSequence}
With the above notation, there is a convergent spectral sequence 
\[
E^{p,q}_2 := H^p(W,E_1^{\vee}|_W \otimes E_2|_W \otimes N_{W/Z_2} \otimes \bigwedge \nolimits^{q-1}\tilde{N}) \implies \Ext^{p+q}(j_{1,*}E_1,j_{2,*}E_2).
\]
\end{Thm}

\begin{Ex}\label{Ex:mixedexts}%
If $X$ has dimension $4$, and $E= \cO_{Z_1}$ and $F= \cO_{Z_2}(-W)$, then by Lemma \ref{lem:symplecticity} we have
\[
E_2^{p,q} = H^p(N_{W/Z_2}^{\vee} \otimes N_{W/Z_2} \otimes \Omega^{q-1}_W) = H^p(\Omega^{q-1}_W).
\]
The spectral sequence degenerates at the $E_2$ page by degree reasons, giving 
\[
\Ext^k(\cO_{Z_1},\cO_{Z_2}(-W)) \cong H^{k-1}(W,\C).
\]
\end{Ex}

\begin{Prop}\label{prop:ExtExactSequence}
Assume that $X$ has dimension four. Then, there is a long exact sequence 
\[
H^k(Z_2,\C) \to \Ext^k(\cO_{Z_2}(-W),\cO_Z) \to H^{k-1}(W,\C) \to H^{k+1}(Z_2,\C),
\]
where the connecting homomorphism is the pushforward in cohomology along the inclusion $W \subset Z_2$.
In particular, there is an isomorphism
\[
\Ext^k(\cO_{Z_2}(-W),\cO_Z) \cong H^k(Z_2 - W,\C).
\]
\end{Prop}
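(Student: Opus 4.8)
The plan is to exploit the short exact sequence of sheaves on $Z = L \cup P'$ relating the structure sheaf of the whole $Z$ to those of its two components. Writing $Z = Z_1 \cup Z_2$ with $W = Z_1 \cap Z_2$, the standard Mayer--Vietoris type sequence for structure sheaves reads
\[
0 \to \cO_Z \to \cO_{Z_1} \oplus \cO_{Z_2} \to \cO_W \to 0.
\]
Twisting by $\cO_{Z_2}(-W)$ is not directly what I want; instead I would apply the contravariant functor $\RHom(\cO_{Z_2}(-W), -)$ to this sequence. This produces a long exact sequence of $\Ext$ groups relating $\Ext^k(\cO_{Z_2}(-W),\cO_Z)$ to $\Ext^k(\cO_{Z_2}(-W),\cO_{Z_1} \oplus \cO_{Z_2})$ and $\Ext^k(\cO_{Z_2}(-W),\cO_W)$. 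The terms $\Ext^k(\cO_{Z_2}(-W),\cO_{Z_1})$ I can compute via Theorem \ref{Thm:spectralSequence} (the symmetric analogue of Example \ref{Ex:mixedexts}), while $\Ext^k(\cO_{Z_2}(-W),\cO_{Z_2})$ reduces, after untwisting, to the cohomology $H^*(Z_2, \C)$ using that $Z_2$ is smooth and $\cO_{Z_2}(-W)$ is a line bundle on it.

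First I would establish the identification $\Ext^k(\cO_{Z_2}(-W),\cO_{Z_2}) \cong H^k(Z_2,\C)$. Since both sheaves live on the smooth variety $Z_2$ and differ by the line bundle $\cO_{Z_2}(-W)$, we have $\RHom_{Z_2}(\cO_{Z_2}(-W),\cO_{Z_2}) \cong \cO_{Z_2}(W)$, whose hypercohomology computes $\Ext$ groups as $H^k(Z_2, \cO_{Z_2}(W))$; here I would use the Lagrangian/topological structure of $Z_2 \cong \P^2$ and Hodge-theoretic degeneration to pass to the constant-sheaf cohomology $H^k(Z_2,\C)$. Next I would compute $\Ext^k(\cO_{Z_2}(-W),\cO_{Z_1})$. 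By Example \ref{Ex:mixedexts}, with the roles of $Z_1, Z_2$ adjusted and $\cO_{Z_2}(-W)$ playing the role of the twisted sheaf, the spectral sequence of Theorem \ref{Thm:spectralSequence} degenerates and yields $H^{k-1}(W,\C)$. Assembling these two computations, the long exact sequence from applying $\RHom(\cO_{Z_2}(-W),-)$ to the Mayer--Vietoris sequence becomes exactly
\[
\cdots \to H^k(Z_2,\C) \to \Ext^k(\cO_{Z_2}(-W),\cO_Z) \to H^{k-1}(W,\C) \to H^{k+1}(Z_2,\C) \to \cdots,
\]
which is the claimed sequence.

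The crucial point, and the main obstacle, is identifying the connecting homomorphism $H^{k-1}(W,\C) \to H^{k+1}(Z_2,\C)$ with the Gysin pushforward along the codimension-one inclusion $W \subset Z_2$. This requires tracking the boundary map of the long exact sequence through the degenerating spectral sequences and the isomorphisms of Lemma \ref{lem:symplecticity}; abstractly the connecting map is a cup product with the extension class of $\cO_W$ in $\Ext^1(\cO_W, \cO_{Z_2}(-W))$, and I would need to recognize this class as the one inducing the Gysin map. Concretely I expect to reduce this to the self-intersection/normal-bundle data: the identification $N_{W/Z_2} \cong N^\vee_{W/Z_1}$ from Lemma \ref{lem:normalsAreDual} and the symplectic identifications of Lemma \ref{lem:symplecticity} should pin down the relevant class up to scalar, forcing the boundary map to be pushforward.

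Finally, once the boundary map is identified as the Gysin pushforward, the last assertion follows formally: the long exact sequence for the open inclusion $Z_2 - W \hookrightarrow Z_2$ with closed complement $W$ is precisely
\[
\cdots \to H^{k-1}(W,\C) \xrightarrow{\;\text{Gysin}\;} H^{k+1}(Z_2,\C) \to H^{k+1}(Z_2 - W,\C) \to \cdots,
\]
so comparing the two long exact sequences term by term (via the five lemma) yields the isomorphism $\Ext^k(\cO_{Z_2}(-W),\cO_Z) \cong H^k(Z_2 - W,\C)$. The degree shift works out because $W$ has real codimension two in $Z_2$, matching the shift already visible in Example \ref{Ex:mixedexts}.
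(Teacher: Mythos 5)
There is a genuine gap, and it is structural: you applied $\RHom(\cO_{Z_2}(-W),-)$ to the wrong short exact sequence. The Mayer--Vietoris sequence $0 \to \cO_Z \to \cO_{Z_1}\oplus\cO_{Z_2} \to \cO_W \to 0$ produces a long exact sequence of the shape
\[
\Ext^k(\cO_{Z_2}(-W),\cO_Z) \to \Ext^k(\cO_{Z_2}(-W),\cO_{Z_1})\oplus\Ext^k(\cO_{Z_2}(-W),\cO_{Z_2}) \to \Ext^k(\cO_{Z_2}(-W),\cO_W) \to \cdots,
\]
which does not match the claimed three-term pattern: there is a direct-sum term, and an $\Ext^k(\cO_{Z_2}(-W),\cO_W)$ term that you never compute, so ``assembling these two computations'' is a non sequitur --- nothing cancels without substantial extra diagram work. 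The sequence that works is the decomposition sequence of the reducible scheme $Z$, namely $0 \to \cO_{Z_2}(-W) \to \cO_Z \to \cO_{Z_1} \to 0$ (the ideal of $Z_1$ in $Z$ is $\cO_{Z_2}(-W)$); applying $\Hom(\cO_{Z_2}(-W),-)$ to \emph{this} immediately gives the flanking terms $\Ext^k(\cO_{Z_2}(-W),\cO_{Z_2}(-W))$ and $\Ext^k(\cO_{Z_2}(-W),\cO_{Z_1})$, which are exactly $H^k(Z_2,\C)$ and $H^{k-1}(W,\C)$.

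Your computation of the $\cO_{Z_2}$-term is also incorrect, and this is not repairable within your setup. The $\Ext$ groups in the Proposition are taken on the ambient fourfold $X$, not on $Z_2$, so $\RcHom_X(\cO_{Z_2}(-W),\cO_{Z_2})$ has local $\mathcal{E}xt$ sheaves $\cO_{Z_2}(W)\otimes\bigwedge^q N_{Z_2/X} \cong \cO_{Z_2}(W)\otimes\Omega^q_{Z_2}$ (using that $Z_2$ is Lagrangian), giving $E_2$-terms $H^p(Z_2,\cO_{Z_2}(W)\otimes\Omega^q_{Z_2})$ --- coherent cohomology of a nontrivial twist, which is not $H^k(Z_2,\C)$; no Hodge-theoretic degeneration passes to constant coefficients. (Also, in the application $Z_2 = L \cong \Sym^2 C$, not $\P^2$.) It is precisely for the \emph{self}-$\Ext$ group $\Ext^k(\cO_{Z_2}(-W),\cO_{Z_2}(-W))$ that the twist cancels inside $\RcHom$ and one gets $\bigoplus_{p+q=k} H^p(Z_2,\Omega^q_{Z_2}) \cong H^k(Z_2,\C)$ --- which is another reason the decomposition sequence, whose first term is $\cO_{Z_2}(-W)$ itself, is the right input. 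Your strategy for the connecting homomorphism (recognizing it as cup product with the extension class and pinning it down via the normal-bundle identifications) is in the right spirit --- the actual argument dualizes the boundary map and tracks it through the degenerating spectral sequences using Verdier duality to identify it with restriction $H^k(Z_2,\C)\to H^k(W,\C)$, whence the Gysin map after dualizing --- and your final five-lemma comparison with the Gysin sequence of $Z_2 - W \hookrightarrow Z_2$ is correct; but both are contingent on first having the correct long exact sequence.
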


\begin{proof}
Consider the long exact sequence obtained applying $\Hom(\cO_{Z_2}(-W),-)$ to
\[
0 \to \cO_{Z_2}(-W) \to \cO_Z \to \cO_{Z_1} \to 0.
\]
Since $Z_2 \subset X$ is a Lagrangian surface, by dimensional reasons the local-to-global spectral sequence vanishes and yields
\begin{equation}\label{eq:extLagrangianPure}
    \Ext^k(\cO_{Z_2}(-W),\cO_{Z_2}(-W)) \cong H^k(Z_2,\C).
\end{equation}
Example \ref{Ex:mixedexts} implies that
\begin{equation}\label{eq:extLagrangianMixed}
    \Ext^k(\cO_{Z_2}(-W),\cO_{Z_1}) \cong H^{k-1}(W,\C).
\end{equation}
Therefore, we only need to show that the connecting homomorphisms 
\[
\Ext^k(\cO_{Z_2}(-W),\cO_{Z_1}) \to \Ext^{k+1}(\cO_{Z_2}(-W),\cO_{Z_2}(-W))
\]
become identified with the pushforwards in cohomology. The Serre dual statement is that the connecting map 
\[
\Ext^k(\cO_{Z_2}(-W),\cO_{Z_2}(-W)) \to \Ext^{k+1}(\cO_{Z_1},\cO_{Z_2}(-W))
\]
is the restriction $H^k(Z_2,\C) \to H^k(W,\C)$. 

The isomorphisms \eqref{eq:extLagrangianPure} and \eqref{eq:extLagrangianMixed} are induced by the degeneration of the spectral sequences:
\[
H^p\RHom(\cH^{-q}(j_2^*j_{2,*}\cO_{Z_2}(-W)),\cO_{Z_2}(-W)) \implies \Ext^{p+q}(\cO_{Z_2}(-W),\cO_{Z_2}(-W))
\] and 
\[
H^p\RHom(\cH^{-q}(j_2^*j_{1,*}\cO_{Z_1}),\cO_{Z_2}(-W)) \implies \Ext^{p+q+1}(\cO_{Z_1},\cO_{Z_2}(-W)).
\]
The connecting homomorphism is induced by pullback along the map 
\[
\cO_{Z_1} \to \cO_{Z_2}(-W)[1].
\]
Taking $j_2^*$ and $\cH^{-q}$ we get the zero map in cohomology for every $q$. This implies that the long exact cohomology sequence induced by $j_2^*\cO_{Z_1} \to j_2^*cO_{Z_2}(-W)[1]$ is actually a collection of short exact sequences, represented by maps 
\begin{equation}\label{eq:mapsCoh}
  \cH^{-q}(j_2^*j_{1,*}\cO_{Z_1}) \to \cH^{-q}(j_2^*j_{2,*}\cO_{Z_2}(-W))[1].  
\end{equation}
Pulling back along those maps gives a map on the $E_2$ page of the spectral sequences, which induces the connecting homomorphism that we wish to understand.

Using \cite[Proposition $A.6$]{cks_2003} we obtain $\cH^{-q}(j_2^*j_{1,*}\cO_{Z_1}) \cong i_{2,*}\bigwedge\nolimits^q\tilde{N}^{\vee}$, where $i_k : W \to Z_k$ is the inclusion. The map \eqref{eq:mapsCoh} becomes a map 
\[
i_{2,*}\bigwedge\nolimits^q\tilde{N}^{\vee} \to \bigwedge\nolimits^q N_{Z_2/X}^{\vee} \otimes \cO_{Z_2}(-W)[1].
\]
Verdier duality gives that $i^!_2 = i_2^* \otimes \cO_W(W)[-1]$, so the map becomes $\bigwedge\nolimits^q\tilde{N}^{\vee} \to \bigwedge\nolimits^q N_{Z_2/X}^{\vee}$, which is identified with the restriction map via Lemma \ref{lem:symplecticity}. 

The isomorphism 
\[
\Ext^k(\cO_{Z_2}(-W),\cO_Z) \cong H^k(Z_2 - W,\C),
\]
follows from the five Lemma applied to the long exact sequence obtained by Poincar\'{e} and Lefschetz duality. 
\end{proof}

\section{Construction of the bundle}\label{sec:ConstructionSheaf}
Let $M = M_S(0,H,-1)$ be the moduli space appearing as the central fiber in the family of Theorem \ref{thm:collino}.
As we recall below, there exists an autoequivalence
\[
\Phi : \Db(M) \isom \Db(M)
\]
whose kernel is the relative Poincaré sheaf.
Let $Z  = P' \cup L \subset M$ the central fiber of the family of Lagrangians of Theorem \ref{thm:collino}.

We make the following construction.
Start with $\cL$ a line bundle of degree zero on $L$. 
Since the intersection $ P' \cap L = K^*$ is rational, the restriction $\cL|_{K^*}$ is trivial. 
Hence $\cL$ glues with the structure sheaf of $P'$ and gives a global line bundle $\overline{\cL}$ on $Z$. 
This means that we have a short exact sequence 
\begin{equation}\label{eq:ShortExactSequenceGluing}
    0 \to \cL(-K^*) \to \overline{\cL} \to \cO_{P'} \to 0
\end{equation}
of sheaves on $X$.
The goal of this section is to study the image $\Phi( \overline{\cL})$, and precisely to prove the following. 

\begin{Prop}\label{prop:MukaiVector}
Let $\Phi : \Db(M) \isom \Db(M)$ and $Z$ be as above, and define 
\[
F \coloneqq \Phi( \overline{\cL}) \in \Db(M).
\]
\begin{enumerate}
    \item The object $F$ is a locally free sheaf of rank five with extended Mukai vector
    \[
    \tilde{v}(F) =  5\left(\alpha + 3f - \frac{3}{4}\beta\right). 
    \]
    \item Defining $F_0 \coloneqq F \otimes \cO(-3f)$, we have 
\[
\tilde{v}(F_0) = 5\left(\alpha-\frac{3}{4}\beta\right)
\]
and 
\[
v(F_0) = 5\left(1-\frac{3}{4}\sfq_2 + \frac{9}{32}\mathfrak{pt}\right).
\]
\end{enumerate}
\end{Prop}

\subsection{Relative Poincar\'{e} sheaf}
Let $(S,H)$ be a polarized K3 surface such that every curve in $|H|$ is integral, e.g. $(S,H)$ general. The moduli space $M:=M_{S}(0,H,d)$ is equipped with a Lagrangian fibration 
\[
M \to |H|
\]
realizing it as the relative compactified Jacobian $\Pic^{d+g-1}(\cC/|H|$) of the the universal curve over the linear system $|H|$. 
In particular, a general point is a line bundle of degree $d$ on a smooth curve in the linear system $|H|$.

In \cite{addington16} the authors extend to the relative compactified Jacobian the construction of the Poincar\'{e} sheaf done by Arinkin \cite{arinkin13} for the Jacobian of singular (integral) curves. In the case of $d = 1 - g$, we obtain a sheaf 
\[
\cP \in \Coh(M \times_{|H|} M).
\]
Taking the Fourier-Mukai transform we obtain a functor
\[
\Phi \coloneqq \Phi_{\cP} : \Db(M) \to \Db(M),
\]
which is an autoequivalence by \cite[Theorem C]{arinkin13}. 
By construction, $\Phi$ maps a general point $x \in M$ to a line bundle over the Jacobian of $C = \pi(x)$. 
The sheaf $\cP$ is only defined up to a normalization, which we fix by requiring that 
\begin{equation}\label{eq:normalizationPoincaré}
   \Phi(\cO_{P'}) = \cO_M.
\end{equation}

The most important aspect (for us) of the autoequivalence $\Phi$ is its ability to turn Cohen--Macaulay sheaves into vector bundles. 

\begin{Prop}\label{prop:locallyFreeness}
Let $M= M_S(0,H,1-g)$ be a moduli space of torsion sheaves on a general polarized K3 surface $(S,H)$ of genus $g$, and let $\pi: M \to \P^g$ be the Lagrangian fibration. Let $L \subset M$ be a subvariety such that $\pi|_L : L \to \P^g$ is finite. If $V_L$ is a Cohen--Macaulay sheaf on $L$, then $\Phi(V_L)$ is a locally free sheaf. 
\end{Prop}

\begin{proof}[Proof of Proposition \ref{prop:locallyFreeness}]
Since the Poincar\'{e} sheaf $\cP \in \Coh(M \times_{\P^g} M)$ is flat with respect to both projections by \cite[Theorem A]{arinkin13}, $\pi_1^{*}(\cM_{L}) \otimes \cP$ is a sheaf on $M \times_{\P^g} M$. So, $\Phi(V_L)$ is a complex concentrated in non negative degrees. To show that it is a locally free sheaf, it suffices to prove
\[
\Ext_M^i(\Phi(V_L),\C(x)) = 0 \quad \text{for } i > 0,
\]
for every $x \in M$, where $\C(x)$ denotes the skyscraper sheaf at $x$. From \cite[Proposition 7.1]{arinkin13} it follows that $\Phi^{-1}(\C(x)) = \cP_{M \times \{x^{\vee}\}}[g]$, where $x^{\vee}$ parameterizes the dual sheaf to $x$. So, we have 
\[
\Ext_M^i(\Phi(V_L),\C(x)) = \Ext_M^i(V_L,\cL_t[g]),
\]
where $t:=\pi(x)$, and $\cL_t := \cP_{M \times \{x^{\vee}\}}$ is a torsion-free rank one sheaf supported on $M_t$. 
Since $V_L$ is a Cohen--Macaulay sheaf of dimension $g$ on $M$, the derived dual $\RcHom_M(V_L,\cO_M)$ is just $\mathcal{E}xt^g(V_L,\cO_M)[-g]$. 
Hence we have
\begin{align*}
    \Ext_M^i(V_L,\cL_t[g]) & \isom \H^i(M, \RcHom_M(V_L,\cO_M) \otimes^{L} \cL_t[g]) \\
    & \isom \H^i(M, \mathcal{E}xt^g(V_L,\cO_M) \otimes^{L} \cL_t).
\end{align*}
The sheaf
\[
\mathcal{H}^i(\mathcal{E}xt^g(V_L,\cO_M) \otimes^{L} \cL_t) = \mathcal{T}or_k^{\cO_M}(\mathcal{E}xt^g(V_L,\cO_M),\cL_t)
\]
vanishes by \cite[Corollary to Theorem V.4]{serre65}.
Indeed, by \cite[Theorem A(2)]{arinkin13} the sheaf $\cL_t$ is Cohen--Macaulay of dimension $g$ on $M$, and the same holds for $\mathcal{E}xt^g(V_L,\cO_M)$. 
So we have
\[
\H^i(M, \mathcal{E}xt^g(V_L,\cO_M) \otimes^{L} \cL_t)= H^i(M, \mathcal{E}xt^g(V_L,\cO_M) \otimes \cL_t) = 0 \quad \text{for } i > 0,
\]
because the sheaf $\mathcal{E}xt^g(V_L,\cO_M) \otimes \cL_t$ is supported on $M_t \cap L$ which is finite. 
\end{proof}

We conclude our overview of the autoequivalence $\Phi$ with the following Lemma, which allows us to understand the restriction to a general fiber. 

\begin{Lem}\label{Lem:restrictingPhi}
Let $M = M_S(0,H,1-g)$ where $(S,H)$ is a general polarized K3 of genus $g$.
For every $E \in \Db (M)$ we have 
\[
\Phi_{\cP}(E)|_{M_t} = i_{M_t,*}\Phi_{\cP_t}(E|_{M_t})  
\]
for $t \in (\P^n)^{\vee}$ a general point. 
\end{Lem}

\begin{proof}
First note that the equivalence $\Phi$ is $(\P^n)^{\vee}$-linear, because the kernel $\cP$ is defined on the fiber product. In particular there is an isomorphism of functors 
\[
\Phi(-) \otimes \cO_{M_t} = \Phi(- \otimes \cO_{M_t}).
\]
Projection formula gives the isomorphism of functors $-\otimes \cO_{M_t} \cong i_{M_t,*}i_{M_t}^*(-)$. To conclude it remains to prove that 
\[
\Phi(i_{M_t,*}(-)) \cong i_{M_t,*}\Phi_{\cP_t}(-),
\]
which follows from the base change Theorem as explained in \cite[Lemma $11.30$]{huybrechts2006}. 
\end{proof}

\subsection{Computation of the class}\label{sec:computationClass}
We are almost ready to prove Proposition \ref{prop:MukaiVector}, the last remaining piece is to compute the exteded Mukai vector of the structure sheaf $\cO_Z$. 
As explained in \cite[Section $8.1$]{beckmann22}, the structure sheaf of $F(X_t \cap H)$ is an atomic sheaf on $F(X_t)$. 
Using \cite[Lemma $7.3$]{markman21} we can determine the line generated by its Mukai vector in $\widetilde{H}(X,Q)$, which is
\begin{equation}\label{eq:MukaiLineFanoSurface}
   \langle \tilde{v}(\cO_{F(X_t \cap H)}) \rangle = \langle h_t - 3\beta \rangle,
\end{equation}
where $h_t$ is the Pl\"ucker polarization on $F(X_t)$. 

\begin{Lem}\label{lem:MukaiLineLagrangians}
The structure sheaves $\cO_{P'}$ and $\cO_{P' \cup L}$ are atomic sheaves on $M$ with Mukai lines spanned respectively by
\[
  \langle \tilde{v}(\cO_{P'}) \rangle = \langle \lambda -3f + 3\beta \rangle
\]
and
\[
 \langle \tilde{v}(\cO_{P' \cup L}) \rangle = \langle \lambda +f - 3\beta \rangle
\]
\end{Lem}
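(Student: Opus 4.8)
The plan rests on a structural dichotomy between the two sheaves. Since $P'\cong\P^2$ is a \emph{smooth} Lagrangian surface, I would compute $\langle\tilde v(\cO_{P'})\rangle$ directly from the formula for the extended Mukai vector of an atomic Lagrangian. By contrast $Z=P'\cup L$ is reducible (a normal crossing of two Lagrangian surfaces), so no such formula applies to $\cO_{P'\cup L}$ directly; for it I would instead exploit the deformation-invariance of the Mukai line along the family of Section~\ref{sec:Degeneration}, whose general fibre $F(\cX_t\cap H)$ is smooth and whose flat limit is exactly $Z$.

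For $\cO_{P'}$: as $\pi|_{P'}\colon P'\to(\P^2)^\vee$ is an isomorphism, the restriction $H^2(M,\C)\to H^2(P',\C)=\C$ has rank one, so $\cO_{P'}$ is atomic by \cite[Theorem 1.8]{beckmann22}, and \cite[Lemma 7.3]{markman21} computes its Mukai line exactly as it did for $F(\cX_t\cap H)$ in \eqref{eq:MukaiLineFanoSurface}: the $H^2$-component is read off from the rank-one restriction map and the $\beta$-coefficient from a numerical invariant of $P'\hookrightarrow M$. Carrying out the computation should give $\langle\lambda-3f+3\beta\rangle$; the BBF square $q(\lambda-3f,\lambda-3f)=-10$, together with the comparison against the Mukai-flop transform of the explicit plane $P=\{p^{-1}(x)\}\subset S^{[2]}$ --- via $h\leftrightarrow\lambda$ and $h-\delta\leftrightarrow f$, hence $\delta\mapsto\lambda-f$ --- provide independent consistency checks.

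For $\cO_{P'\cup L}$: by Theorem~\ref{thm:collino} the surface $Z$ is the flat limit of the smooth Fano surfaces $\cZ_t=F(\cX_t\cap H)$, and the relative structure sheaf $\cO_{\cZ}$ restricts to $\cO_{F(\cX_t\cap H)}$ on general fibres and to $\cO_Z$ on $\cF_0=M$. The general fibre is atomic with line $\langle h_t-3\beta\rangle$ by \eqref{eq:MukaiLineFanoSurface}. The Mukai line $\langle\tilde v\rangle$ varies in a flat (monodromy-invariant) section of the local system of extended lattices $\widetilde{H}(\cF_s,\Q)$ over $\Delta$, so atomicity passes to the central fibre and it suffices to parallel-transport $\langle h_t-3\beta\rangle$ to $M$. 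Parallel transport commutes with the LLV action and preserves the weight grading, hence fixes $\alpha$ and $\beta$; thus $\beta\mapsto\beta$ with its coefficient, while on $H^2$ the Pl\"ucker class is carried to its limit. Tracing this through the blow-ups and the contraction of Section~\ref{sec:Degeneration} together with the flop identifications yields the image $\lambda+f$, whence $\langle\tilde v(\cO_{P'\cup L})\rangle=\langle\lambda+f-3\beta\rangle$.

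The hard part will be the precise lattice-theoretic identification of $\widetilde{H}(F(\cX_t),\Q)$ with $\widetilde{H}(M,\Q)$ across the operations of Section~\ref{sec:Degeneration} --- the $2:1$ base change, the blow-up along $F_1$, and the contraction to $\overline{\cF}'$ --- keeping track of the flop identifications $h\leftrightarrow\lambda$, $h-\delta\leftrightarrow f$ and, crucially, checking that the $\beta$-direction transports without rescaling so that the coefficient $-3$ is preserved. One must also confirm that atomicity survives the specialization to the reducible limit $Z$, and not merely on the smooth fibres $\cZ_t$. The transported Pl\"ucker class has square $6$, matching Remark~\ref{rem:polarization} and forcing it to be $\lambda+f$, which is a reassuring check. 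For $\cO_{P'}$ the corresponding difficulty is isolating in \cite[Lemma 7.3]{markman21} the numerical invariant that fixes the $\beta$-coefficient to be exactly $+3$; matching it against the Mukai-flop transform is what I expect to settle the value.
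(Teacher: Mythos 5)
Your proposal is correct and takes essentially the same route as the paper's (very terse) proof: $\cO_{P'}$ is handled by \cite[Lemma 7.3]{markman21}, and $\cO_{P'\cup L}$ by parallel transport of the line $\langle h_t-3\beta\rangle$ of \eqref{eq:MukaiLineFanoSurface} along the family of Theorem \ref{thm:collino}, with the limit of the Pl\"ucker class identified via Remark \ref{rem:polarization} (the $\beta$-coefficient being canonical in the extended lattice, exactly as you note). One small caveat: square $6$ and divisibility $2$ alone do not force $\lambda+f$ (the class $3\lambda-f$ also qualifies), and Remark \ref{rem:polarization} actually prints the limit class as $\lambda+2f$, whose BBF square is $10$ with the stated Gram matrix --- an apparent typo for $\lambda+f$ --- so your numerical check is best read as confirming the class the remark supplies directly rather than as an independent derivation of it.
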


\begin{proof}
The statement for $P'$ follows from \cite[Lemma $7.3$]{markman21}. The statement for the union $P' \cup L$ follows from \eqref{eq:MukaiLineFanoSurface} combined with Theorem \ref{thm:collino} and Remark \ref{rem:polarization}.
\end{proof}

\begin{proof}[Proof of Proposition \ref{prop:MukaiVector}.]
Consider the short exact sequence \eqref{eq:ShortExactSequenceGluing}
\[
0 \to \cL(-K^*) \to \overline{\cL} \to \cO_{P'} \to 0
\]
After applying the autoequivalence $\Phi$ this becomes 
\[
0 \to \Phi(\cL(-K^*)) \to F \to \cO_ M \to 0.
\]
Here $\Phi(\cL(-K^*))$ is locally free by Proposition \ref{prop:locallyFreeness}, and has rank four by Lemma \ref{Lem:restrictingPhi}. 
It follows that $F$ is locally free of rank five. 

Now we compute its extended Mukai vector.
Notice that since $\cL$ has degree zero, we have $v(\cL) = v(\cO_L)$, so it suffices to compute the extended Mukai vector of $\Phi(\cO_Z)$. 
We start by describing the action of the equivalence $\Phi$ on $\widetilde{H}(X,\Q)$. 
We follow the computations done in \cite[Proposition $10.4$]{beckmann21} for odd genus. 

First, since the skyscraper sheaf of a point goes to a line bundle of degree $0$ on a fiber, we deduce that $\beta \mapsto f$. From the autoduality property of the Poincar\'{e} sheaf, described in \cite[Equation $(7.8)$]{arinkin13}, we see that $f \mapsto \beta$. 
The choice \eqref{eq:normalizationPoincaré} for the normalization of $\cP$ implies
\[
\lambda - 3f +3\beta \mapsto -2\left(\alpha + \frac{5}{4}\beta\right),
\]
where the coefficient $-2$ is determined by imposing the map $\Phi^{\widetilde{H}}$ to be an isometry. 
This implies that
\begin{align*}
    \lambda + f -3\beta = \lambda - 3f + 3\beta + (4f -6\beta) &\mapsto  -2 \left(\alpha + \frac{5}{4}\beta\right) + (4\beta - 6f) \\ 
    &= -2\alpha -6f +\frac{3}{2}\beta.
\end{align*}
Since $\tilde{v}(\cO_Z) =  \lambda + f -3\beta$ by Lemma \ref{lem:MukaiLineLagrangians}, the formula for the extended Mukai vector follows by \cite[Theorem 1.7(4)]{markman21}. 
Since by Lemma \ref{Lem:restrictingPhi} $F$ has rank five, its normalized extended Mukai vector is 
\[
\tilde{v}(F) = 5\alpha +15f - \frac{15}{4}\beta.
\]
Twisting by $\cO(-3f)$ we kill the first Chern class, and the extended Mukai vector becomes
\[
\tilde{v}(F_0) = 5\left(\alpha-\frac{3}{4}\beta\right).
\]
This can be computed, for example, using the fact that tensor product with a line bundle induces an isometry on $\wt{H}(X,\Q)$.
Corollary \ref{cor:fullMukaiVector} then gives
\begin{equation*}
v(F_0) = 5\left(1-\frac{3}{4}\sfq_2 + \frac{9}{32}\mathfrak{pt}\right),
\end{equation*}
because if $c_X =1$, then $\sfq_4 = \mathfrak{pt}$. 
\end{proof}

\subsection{Ext groups}
To conclude this section, we apply the results of Section \ref{sec:extGroups} to compute the Ext groups $\Ext^*(F,F)$. 
Being obtained from $\overline{\cL} \in \Db(M)$ through an equivalence, it suffices to compute the Ext groups of $\overline{\cL}$.

\begin{Lem}\label{lem:extComputation}
With the above notation we have isomorphisms
\begin{align*}
    &\Ext^1(\overline{\cL},\overline{\cL}) \cong H^1(L,\C) \\
    &\Ext^2(\overline{\cL},\overline{\cL})  \cong \mathrm{Cok}\left(H^0(K^*,\C) \to H^2(L,\C)\right)
\end{align*}
\end{Lem}

\begin{proof}
Again, we start with the short exact sequence \eqref{eq:ShortExactSequenceGluing}
\[
0 \to \cL(-K^*) \to \overline{\cL} \to \cO_{P'} \to 0.
\]
The isomorphisms \eqref{eq:extLagrangianPure} and \eqref{eq:extLagrangianMixed} remain valid for the same reasons. 
The rest of the proof of Proposition \ref{prop:ExtExactSequence} is not affected by the twist, so it remains to show that
\begin{equation}\label{eq:isomorphismsLongExact}
    \Ext^i(\overline{\cL},\overline{\cL}) \cong \Ext^i(\cL(-K^*),\overline{\cL}) \text{ for } i =1,2.
\end{equation}
The sheaves $\cE = \cO_{P'}$ and $\cF = \cL(-K^*)$ satisfy the assumptions of Section \ref{sec:P-twists} by Theorem \ref{Thm:spectralSequence}. 
Thus \eqref{eq:isomorphismsLongExact} is a consequence of the vanishings in Corollary \ref{cor:vanishings}. 
\end{proof}

\begin{Cor}\label{cor:YonedaSkewSymmetric}
 The Yoneda pairing is skew-symmetric and induces an isomorphism
\begin{equation*}
    \bwed^2 \Ext^1\left(\overline{\cL},\overline{\cL}\right) \xrightarrow{\sim} \Ext^2\left(\overline{\cL},\overline{\cL}\right), \quad a \wedge b \to a \circ b.
\end{equation*}
\end{Cor}

\begin{proof}
    This relies on the fact that the Lagrangian $L$ is in fact the symmetric square of a genus five curve $C$. 
    In fact we have  
\begin{equation*}
 H^2(L,\C) \cong H^2(C,\C) \oplus \bigwedge\nolimits^2 H^1(L,\C),
\end{equation*}
where the second summand is embedded via cup product.
The fundamental class of $K^* \subset \Sym^2 C$ spans the direct summand $H^2(C,\C)$.
Thus, taking the cokernel as in Lemma \ref{lem:extComputation} we get
\begin{equation}\label{eq:Ext2wedge}
    \Ext^2\left(\overline{\cL},\overline{\cL}\right) \cong \bigwedge\nolimits^2 H^1(L,\C).
\end{equation}
To conclude, recall that by \cite[Theorem 2.1.5]{mladenov19}, the isomorphism 
\[
\Ext^*(\cL,\cL) \cong H^*(L,\C)
\]
preserves the algebra structure. 
Hence, the cokernel map
\[
H^2(L,\C) \to \Ext^2\left(\overline{\cL},\overline{\cL}\right)
\]
maps the cup product to the Yoneda product.
The statement then follows from \eqref{eq:Ext2wedge}.
\end{proof}

\section{Semistable reduction}\label{sec:ssreduction}
In this section, we assume fixed a line bundle $\cL \in \Pic^0(L)$, and we examine the stability of the bundle $F$ constructed in Section \ref{sec:ConstructionSheaf}.
In particular, we will show that it is not stable, and to obtain a slope-stable vector bundle we apply two (inverses) $\P$-twists. 

By our normalization of the Poincar\'{e} sheaf we have $\Phi(\cO_{P'}) = \cO_M$, and Propositions \ref{prop:locallyFreeness} and  \ref{prop:geometryofZ2} imply that $G \coloneqq \Phi(\cL(-K^*))$ is a vector bundle of rank four. 
Therefore we have a short exact sequence
\begin{equation}\label{eq:sequenceF}
    0 \to G \to F \to \cO_M \to 0,
\end{equation}
obtained applying $\Phi$ to \eqref{eq:ShortExactSequenceGluing}.

As noted in the proof of Lemma \ref{lem:extComputation}, the sheaves $\cE = \cO_{P'}$ and $\cF = \cO_{L}(-K^*)$ are as in the setting of Section \ref{sec:P-twists}. 
By Corollary \ref{cor:ssReduction} the inverse $\P$-twist of $\overline{\cL}$ around $\cO_{P'}$ lives in a short exact sequence 
\[
0 \to \cO_{P'} \to P^{-1}_{\cO_{P'}}(\overline{\cL}) \to \cL(-K^*) \to 0.
\]
and we set 
\begin{equation}
    F' := \Phi\left(P^{-1}_{\cO_{P'}}\left(\overline{\cL}\right)\right) \in \Db(M).
\end{equation}
By construction of the $\P$-twist we have $F' \cong P^{-1}_{\cO_M}(F)$.
Applying the equivalence $\Phi$ we get a short exact sequence
\begin{equation*}\label{eq:FasExtension}
    0 \to \cO_M \to F' \to G \to 0.
\end{equation*}
In particular, the sheaf  $F'$ is locally free of rank five.

\begin{Lem}
The vector bundles $F$ and $F'$ are unstable for any polarization $h$ on $M$. 
\end{Lem}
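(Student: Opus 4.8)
The plan is to read the whole statement off from slopes. Writing $\mu_h(E)=\bigl(c_1(E)\cdot h^3\bigr)/\rk(E)$, the two defining sequences give $\rk F=\rk F'=5$ and, by Remark \ref{rem:twisting} (or simply additivity of $c_1$ and $\rk$ in the sequences), $c_1(F)=c_1(F')=15f$, while $\rk G=4$, $c_1(G)=15f$, and $\rk\cO_M=1$, $c_1(\cO_M)=0$. The only geometric input is the sign of $f\cdot h^3$. Since $f=\pi^*\cO(1)$ is nef and nonzero, the fully polarized Fujiki relation gives $f\cdot h^3=3c_X\,q(f,h)\,q(h,h)$; as $c_X>0$, $q(h,h)>0$, and $q(f,h)>0$ for a nonzero nef class paired with an ample one, I get $f\cdot h^3>0$ for \emph{every} polarization $h$. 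Hence
\[
\mu_h(\cO_M)=0<\mu_h(F)=\mu_h(F')=3\,(f\cdot h^3)<\mu_h(G)=\tfrac{15}{4}\,(f\cdot h^3).
\]

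For $F$ this already finishes the argument: the surjection $F\twoheadrightarrow\cO_M$ of \eqref{eq:sequenceF} is a torsion-free quotient of slope $0<\mu_h(F)$ (equivalently, $G\subset F$ is a subsheaf of slope $>\mu_h(F)$), so $F$ fails to be $\mu$-semistable for any $h$.

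The case of $F'$ is the real content and the place where I expect the difficulty. Here both distinguished pieces point the wrong way: the subsheaf $\cO_M\hookrightarrow F'$ has slope $0<\mu_h(F')$ and the quotient $F'\twoheadrightarrow G$ has slope $\tfrac{15}{4}(f\cdot h^3)>\mu_h(F')$, so neither violates semistability; moreover $\Hom(F',\cO_M)=0$, since by the equivalence $\Phi$ and $P_{\cO_{P'}}(\cO_{P'})\cong\cO_{P'}[-4]$ one has $\Hom(F',\cO_M)=\Hom(\cO_Z,\cO_{P'}[-4])=0$, so $\cO_M$ is not even available as a destabilizing quotient. To destabilize $F'$ I would instead exploit the structure of $G$ along the Lagrangian fibration: restricted to a general fiber $M_t$, the bundle $G=\Phi(\cO_L(-K^*))$ is the fiberwise Fourier--Mukai transform of the length-four sheaf $\cO_L(-K^*)|_{M_t}$, hence splits as a direct sum of four degree-zero line bundles. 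This fiberwise decomposability, read through O'Grady's suitable-polarization comparison \cite{ogrady21}, should produce a saturated subsheaf of $G$ of slope $>\mu_h(F')$.

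The hard part is to turn such a subsheaf of $G$ into a \emph{subsheaf of} $F'$ of slope exceeding $\mu_h(F')$. One cannot simply pull a destabilizing subsheaf back through $F'\twoheadrightarrow G$: a rank-$k$ subsheaf $G'\subset G$ lifts to the extension $0\to\cO_M\to\widetilde{G'}\to G'\to 0$ of slope $\tfrac{k}{k+1}\mu_h(G')$, and the slope-zero factor $\cO_M$ dilutes this below $\mu_h(F')=3(f\cdot h^3)$ for all $k\le 3$ unless $\mu_h(G')$ is very large. The lift must therefore \emph{avoid} $\cO_M$, i.e. one must exhibit a subsheaf $G''\subset G$ with $\mu_h(G'')>\mu_h(F')$ on which the extension class of $0\to\cO_M\to F'\to G\to 0$ restricts to zero, so that $G''$ embeds into $F'$. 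Verifying the existence of such a $G''$ and the vanishing of the restricted extension class — most plausibly by a direct analysis of how the four fiberwise summands of $G$ organize globally and interact with the $\P$-twist — is the main obstacle, and is where the detailed geometry of the Poincar\'e sheaf and the suitable polarization of \cite{ogrady21} will be needed.
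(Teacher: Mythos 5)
Your treatment of $F$ is correct and is exactly the paper's argument: the quotient $F\twoheadrightarrow\cO_M$ from \eqref{eq:sequenceF} has slope $0<\mu_h(F)$, and this works for every polarization since $f$ is nef and nonzero. You also correctly diagnose the situation for $F'$: neither $\cO_M\hookrightarrow F'$ nor $F'\twoheadrightarrow G$ destabilizes, and $\Hom(F',\cO_M)=0$ by the adjunction $\Hom(P^{-1}_{\cO_{P'}}(\cO_Z),\cO_{P'})=\Hom(\cO_Z,P_{\cO_{P'}}(\cO_{P'}))=\Hom(\cO_Z,\cO_{P'}[-4])=0$. But the destabilization of $F'$ --- which is the entire content of the lemma --- is left as a program: you say a suitable subsheaf $G''\subset G$ ``should'' exist and defer the vanishing of the restricted extension class. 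That is a genuine gap, not a detail; without producing the destabilizing object the statement for $F'$ is unproved.

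The paper closes exactly this gap, and it does so by dualizing your strategy: instead of lifting a subsheaf of $G$ through the extension, it constructs a small-slope \emph{quotient} directly on the Lagrangian side. By Lemma \ref{lem:normalsAreDual} one has $\cO_L(K^*)|_{K^*}\cong\cO_{P'}(2)|_{K^*}^{\vee}$, hence $\Hom(\cO_L(-K^*),\cO_{P'}(2)|_{K^*})=H^0(K^*,\cO_{K^*})=\C$, giving a canonical surjection $\cO_L(-K^*)\twoheadrightarrow\cO_{P'}(2)|_{K^*}$. Since $\Ext^1(P^{-1}_{\cO_{P'}}(\cO_Z),\cO_{P'})=0$ (same adjunction as above, using $P_{\cO_{P'}}(\cO_{P'})\cong\cO_{P'}[-4]$), the composite $P^{-1}_{\cO_{P'}}(\cO_Z)\to\cO_L(-K^*)\to\cO_{P'}(2)|_{K^*}$ lifts across $0\to\cO_{P'}\to\cO_{P'}(2)\to\cO_{P'}(2)|_{K^*}\to 0$ to a surjection onto $\cO_{P'}(2)$; applying $\Phi$ yields a nonzero map $F'\to\cO_M(2f)$, whose image has slope at most $\mu(\cO_M(2f))=2q(h,f)<3q(h,f)=\mu(F')$, destabilizing $F'$ for every suitable-independent choice of $h$. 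Note that the kernel of this map is exactly $G'=\Phi(\cO_L(-2K^*))$, a rank-four subsheaf of $F'$ contained in $G$ with $c_1=13f$ and slope $\tfrac{13}{4}q(h,f)>\mu(F')$: this is precisely the subsheaf $G''$ whose existence you postulated. The lesson is that the extension-class analysis you identified as the obstacle is sidestepped entirely --- the $\Ext^1$-vanishing needed for the lift is a one-line computation under the $\P$-twist, whereas controlling how the four fiberwise summands of $G$ organize globally, as you propose, is not obviously tractable.
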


\begin{proof}
By \cite[Remark 2.4]{huybrechts_thomas06} any $\P$-twist acts as the identity in cohomology, so $F$ and $F'$ have the same (extended) Mukai vector, which can be computed using Proposition \ref{prop:MukaiVector}.
In particular
\[
\rk(F) = \rk(F') = 5 \text{ and } c_1(F) = c_1(F')= 15f.
\]
The slope with respect to a polarization $h$ is  
\[
\mu(F)=\mu(F') = 3q(h,f) >0.
\] 
So the sequence \eqref{eq:sequenceF} destabilizes $F$.

To destabilize $F'$, first recall that the normal bundles of $K^*$ in $P'$ and $L$ are dual to each other
\[
\cO_L(K^*)|_{K^*} \cong \cO_{P'}(2)|_{K^*}^{\vee},
\]
as we proved in Lemma \ref{lem:normalsAreDual}.
Since the restriction $\cL|_{K^*}$ is trivial, because $K^*$ is rational, we 
\[
\Hom(\cL(-K^*),\cO_{P'}(2)|_{K^*}) = H^0(K^*,\cO_{K^*}) = \C.
\]
The unique map
\[
\cL(-K^*) \twoheadrightarrow \cO_{P'}(2)|_{K^*},
\]
must be a twisting of the canonical map associated to the embedding $K^* \subset L$, in particular is surjective.
Since $\Ext^1(P_{\cO_{P'}}^{-1}(\overline{\cL}),\cO_{P'}) = 0$, we can lift the composite map
\[
P_{\cO_{P'}}^{-1}(\cO_Z) \to \cL(-K^*) \to \cO_{P'}(2)|_{K^*},
\]
to a diagram
% https://q.uiver.app/?q=WzAsMTAsWzIsMCwiUF97XFxjT197UCd9fV57LTF9KFxcY09fWikiXSxbMywwLCJcXGNPX3tMfSgtS14qKSJdLFsxLDAsIlxcY09fe1AnfSJdLFs0LDAsIjAiXSxbMCwwLCIwIl0sWzMsMSwiXFxjT197UCd9KDIpfF97S14qfSJdLFsyLDEsIlxcY09fe1AnfSgyKSJdLFsxLDEsIlxcY09fe1AnfSJdLFswLDEsIjAiXSxbNCwxLCIwIl0sWzIsMF0sWzAsMV0sWzEsM10sWzQsMl0sWzgsN10sWzcsNl0sWzYsNV0sWzUsOV0sWzAsNiwiIiwwLHsic3R5bGUiOnsiaGVhZCI6eyJuYW1lIjoiZXBpIn19fV0sWzEsNSwiIiwwLHsic3R5bGUiOnsiaGVhZCI6eyJuYW1lIjoiZXBpIn19fV0sWzIsNywiIiwxLHsibGV2ZWwiOjIsInN0eWxlIjp7ImhlYWQiOnsibmFtZSI6Im5vbmUifX19XV0=
\begin{equation}\label{diagram:destabilizingF'}
\begin{tikzcd}
	0 & {\cO_{P'}} & {P_{\cO_{P'}}^{-1}\left(\overline{\cL}\right)} & {\cL(-K^*)} & 0 \\
	0 & {\cO_{P'}} & {\cO_{P'}(2)} & {\cO_{P'}(2)|_{K^*}} & 0
	\arrow[from=1-2, to=1-3]
	\arrow[from=1-3, to=1-4]
	\arrow[from=1-4, to=1-5]
	\arrow[from=1-1, to=1-2]
	\arrow[from=2-1, to=2-2]
	\arrow[from=2-2, to=2-3]
	\arrow[from=2-3, to=2-4]
	\arrow[from=2-4, to=2-5]
	\arrow[two heads, from=1-3, to=2-3]
	\arrow[two heads, from=1-4, to=2-4]
	\arrow[Rightarrow, no head, from=1-2, to=2-2]
\end{tikzcd}
\end{equation}
where the short exact sequence below is the defining sequence of the inclusion $K^* \subset P'$.
When applying $\Phi$, the vertical central map becomes a non-zero morphism $F' \to \cO_M(2f)$.
The inequality
\[
\mu(\cO_M(2f)) = 2q(h,f) < 3q(h,f) = \mu(F')
\]
show that $F'$ is destabilized by this map. 
\end{proof}

To obtain a stable bundle $F''$, we replicate the construction of $F'$ with an additional $\P$-twist around the line bundle $\cO_M(2f)$.
Namely, define
\begin{equation*}
    F'' \coloneqq P_{\cO_M(2f)}^{-1}(F'),
\end{equation*}
and notice that 
\[
F'' \isom \Phi\left(P_{\cO_{P'}(2)}^{-1}(P^{-1}(\overline{\cL}))\right)
\]
by construction. 
A diagram chase in \eqref{diagram:destabilizingF'} shows that 
\[
\Ker\left(P_{\cO_{P'}}^{-1}\left(\overline{\cL}\right) \to \cO_{P'}(2)\right) = \cL(-2K^*).
\]
So, defining $G' := \Phi(\cO_L(-2K^*))$ we have a short exact sequence
\[
0 \to G' \to F' \to \cO_M(2f) \to 0.
\]
From the spectral sequence in Proposition \ref{prop:ExtExactSequence} we see that the pair $\cE = \cO_M(2f)$ and $\cF=\cL(-2K^*)$ satisfies the assumptions of Section \ref{sec:P-twists} 
Via the equivalence $\Phi$, Corollary \ref{cor:ssReduction} provides a short exact sequence
\begin{equation}\label{eq:sequenceF''}
    0 \to \cO_M(2f) \to F'' \to G' \to 0,
\end{equation}
from which we deduce that $F''$ is a locally free sheaf of rank five. 
Notice that $\mu(\cO_M(2f)) < \mu(F'')$, so this sequence does not destabilize.  

\begin{Rem}\label{rem:atomicity}
%Since $Z \subset M$ is a degeneration of the Fano surface of lines inside the Fano variety of lines, its structure sheaf $\cO_Z$ is an atomic object. 
The bundles $F,F'$ and $F''$ are all atomic, because they are obtained from $\cO_Z$ by derived equivalences. 
They all have the same Mukai vector, because the $\P$-twist acts as the identity in cohomology. 
\end{Rem}

\subsection{Proof of stability}\label{sec:ProofOfStability}
Our next goal is to show that $F''$ is slope-stable for some polarization $h$. 
Since it is modular by Proposition \ref{prop:discriminant}, we can use the results in \cite{ogrady22} for slope-stability for modular sheaves. 

We are interested in slope-stability for suitable polarizations (see \cite[Definition $3.5$]{ogrady22}).
Intuitively, a polarization $h$ is suitable if it is very close to the nef divisor $f$.
More precisely, as shown in \cite[Section 3]{ogrady22}, for any modular sheaf $F$ on a HK manifold $X$ there is a wall and chamber decomposition of the ample cone of $X$.

This decomposition depends only on the number 
 \[
 a(F) \coloneqq \frac{r(F)^2 \cdot d(F)}{4c_X},
\]
where $d(F) \in \Q$ is defined by the equality 
\[
    \Delta(F)_{\SH} = d(F)\sfq_2.
\]
We say that a polarization is $a(F)$-generic if it belongs in one of the chambers. 
When we want to highlight that $a(F)$ depends only on the Mukai vector $\vv=v(F)$ and not on the sheaf itself, we will say that a polarization is $a(\vv)$-generic. 

At least in the case of a projective \HK of Picard rank two, a polarization is suitable if it lives in the chamber (of the ample cone) whose closure contains $f$. 
Stability with respect to an $a(F)$-suitable polarization is special, because it allows to study stability of $F$ by stability of the restriction to a general fiber, see \cite[Section 3.5]{ogrady21}

\begin{Rem}\label{rem:ComputationOfa(F)}
    Using the computations in Section \ref{sec:computationClass} and Section \ref{sec:Discriminant} we can compute the number $a(F'')$. 
    By Proposition \ref{prop:discriminant} we have 
    \[
    d(F'') = 25\tilde{q}\left( \alpha - \frac{3}{4}\beta \right) + \frac{5}{2}\cdot 25 = 100,
    \]
    and thus 
    \[
    a(F'') = \frac{25 \cdot 100}{4} = 625. 
    \]
\end{Rem}

In what follows by a suitable polarization we mean a $625$-suitable polarization in the sense of \cite[Definition $3.5$]{ogrady21}.

\begin{Prop}\label{prop:determinant}
Let $h$ be a suitable polarization on $M$. 
If $E \subset F''$ is $h$-destabilizing, then $c_1(E) = b \cdot f$ for some $b \in \Z$. 
\end{Prop}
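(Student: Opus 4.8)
The plan is to write $c_1(E) = a\lambda + b f$ with $a,b \in \Z$ (possible since $\NS(M) = \Z\lambda \oplus \Z f$) and to show that $a = 0$; then $c_1(E) = b f$ as claimed. I would obtain this by squeezing $a$ between two inequalities: an upper bound coming from the restriction to a general Lagrangian fibre, and a lower bound coming from the suitability of $h$.

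For the upper bound on $a$, I would restrict everything to a general fibre $M_t$ of $\pi$. This is an abelian surface carrying the principal polarisation $\theta := \lambda|_{M_t}$, and $f|_{M_t} = 0$. By Lemma \ref{Lem:restrictingPhi} together with the sequence \eqref{eq:sequenceF''}, the restriction $F''|_{M_t} = \cO_{M_t} \oplus L_1 \oplus \cdots \oplus L_4$ is a direct sum of degree-zero line bundles, hence $\theta$-polystable of slope $0$. Passing to a saturated $E$ (so that $F''/E$ is torsion-free) and restricting to a general $t$, Tor-vanishing shows that $E|_{M_t} \hookrightarrow F''|_{M_t}$ is again a subsheaf; being contained in a torsion-free sheaf it is itself torsion-free, and $\mu_\theta$-semistability of the ambient bundle forces $\mu_\theta(E|_{M_t}) \leq 0$. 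Since $c_1(E)|_{M_t} = a\theta$ and $\theta^2 > 0$, this gives $a \leq 0$.

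For the reverse inequality I would use that $E$ is $h$-destabilising, i.e. $\mu_h(E) \geq \mu_h(F'')$, and that $h$ is suitable, hence numerically very close to the nef isotropic class $f$. By the Fujiki relation the slope on $M$ is proportional to $q(c_1(-),h)/\rk(-)$, so the destabilising inequality reads
\[
\frac{q(a\lambda + b f,\, h)}{\rk E} \geq 3\, q(f,h).
\]
As $h$ approaches $f$ one has $q(f,h) \to q(f,f) = 0$ while $q(\lambda,h) \to q(\lambda,f) = 2$, and $q(f,h) > 0$ because $f$ is nef and nonzero and $h$ is ample. Thus the leading term of the left-hand side is $2a/\rk E$ while the right-hand side is positive but arbitrarily small; were $a < 0$, the inequality would fail for $h$ sufficiently close to $f$, contradicting that $E$ destabilises. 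Hence $a \geq 0$, and combined with the previous step $a = 0$.

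The main obstacle I anticipate is not the numerology but the careful translation into O'Grady's formalism: the informal ``$h \to f$'' limit must be replaced by the precise chamber inequality that $a(F'')$-suitability guarantees, and one must check that fibrewise semistability genuinely controls the global destabilising subsheaf. In effect, the content of suitability is exactly the reduction of global slope-(in)stability to the generic fibre, so the real work lies in invoking \cite[Definition $3.5$ and the surrounding results]{ogrady21} to legitimately combine the fibrewise bound $a \leq 0$ with the global destabilising inequality. The flatness and Tor-vanishing needed to make $E|_{M_t} \hookrightarrow F''|_{M_t}$ a genuine subsheaf for general $t$ are routine, but should be recorded explicitly.
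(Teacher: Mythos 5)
Your proposal is correct and follows essentially the same route as the paper: decompose $c_1(E)$ in $\NS(M) = \Z\lambda \oplus \Z f$, get one sign of the $\lambda$-coefficient from semistability of the restriction $F''|_{M_t}$ (a sum of degree-zero line bundles, via Lemma \ref{Lem:restrictingPhi}), and the opposite sign by letting the polarization tend to $f$ within the chamber, which is exactly how the paper uses \cite[Proposition 3.4]{ogrady21} to reduce to $h = f + \varepsilon\lambda$ and pass to the limit $\varepsilon \to 0$. The only cosmetic difference is that you express slopes via the Fujiki relation as $q(c_1(\cdot),h)/\rk$, while the paper expands $\int_M c_1 \cup h^3$ directly; these are equivalent on a \HK fourfold.
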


\begin{proof}
By \cite[Proposition $3.4$]{ogrady21} it is enough to show the statement for a rational ample class in the same chamber as $h$, i.e. we can assume $h = f + \varepsilon\lambda$ for $0 < \varepsilon << 1$. 
We write
\[
c_1(E) = bf + c\lambda,
\]
with respect to the decomposition of \eqref{eq:NeronSeveriOfModuliSpace}.
Let $t \in (\P^2)^{\vee}$ be a general point. 
Lemma \ref{Lem:restrictingPhi} together with the exact sequence \eqref{eq:sequenceF''}, implies that for a general $t$ 
\[
F''_t = \cO_{M_t} \oplus L_{t,1} \oplus \dots \oplus L_{t,4},
\]
where $L_i$ are line bundles of degree zero on $M_t$.  
Therefore, the restriction $F''_t$ is semistable because it is the sum of line bundles of degree $0$.
So we have
\[
    2c\varepsilon = \int_{M_t}{c_1(E_t) \cup h_t} \leq \int_{M_t}{c_1(F''_t) \cup h_t} = 0,
\]
which gives $c \leq 0$.

By definition we have 
\[
\mu (E) = \frac{1}{\rk(E)}\int_{M}{(bf+c\lambda)\cup h^3},  
\]
where $h^3 = 3\varepsilon(f^2 \cup \lambda) + 3\varepsilon^2(f \cup \lambda^2) + \varepsilon^3 \lambda^3$, because $f^3 = 0$. The class $f^2$ is Poincar\'{e} dual to a general fiber, so we have 
\[
\int_M{f^2 \cup \lambda^2} = \int_{M_t}{\lambda_t^2} >0.
\]
In particular, in $\mu (E)$ there is a term in $\varepsilon$, namely $\mu(E) = 3c\varepsilon\int_M{f^2 \cup \lambda^2} + \varepsilon^2(\dots)$. On the other hand, since $c_1(F'') = 15f$ and $\rk(F'') = 5$, we have $\mu(F'') = 9\varepsilon^2(\dots).$ The assumption that $h$ is destabilizing gives the inequality 
\[
\mu(E) \geq \mu(F'') \quad \forall \varepsilon <<1.
\]
Passing to the limit $\varepsilon \to 0$, we obtain that the term in $\varepsilon$ must be non-negative, i.e. $c \geq 0$. Combining with the previous inequality we get $c = 0$.
\end{proof}

Assume that there exists $A \subset F''$ a destabilizing subsheaf. 
By definition $0 < \rk(A) < \rk(F'')$, and we can assume that $A$ is saturated in $F''$, that is $B \coloneqq F''/A$ is torsion-free. 

\begin{Lem}\label{lem:dichotomyRank}
With the above notation, either $\rk(A) = 1$ or $\rk(A) = 4$. 
\end{Lem}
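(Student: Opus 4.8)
The plan is to restrict the inclusion $A \subset F''$ to a general fiber $M_t$ of the Lagrangian fibration $\pi : M \to (\P^2)^{\vee}$ and then to exploit the monodromy of the family $M \to (\P^2)^{\vee}$. First I would record the fiber picture: by Lemma \ref{Lem:restrictingPhi} together with the sequence \eqref{eq:sequenceF''}, for general $t$ one has $F''_t = \cO_{M_t} \oplus L_1 \oplus \cdots \oplus L_4$, where the trivial summand is the restriction of the sub-line bundle $\cO_M(2f)$ (note $f|_{M_t}=0$), and the $L_i$ are the images under the fiberwise equivalence of the four points of $L \cap M_t$. For general $t$ these four points are reduced and distinct by Proposition \ref{prop:geometryofZ2}, so $F''_t$ is a polystable bundle of slope zero whose five simple summands are pairwise non-isomorphic.

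Next I would pin down $A_t := A|_{M_t}$ up to torsion. By Proposition \ref{prop:determinant} we have $c_1(A) = af$ for some integer $a$, so $c_1(A)|_{M_t} = 0$ and the torsion-free part of $A_t$ is a slope-zero subsheaf of $F''_t$. Since $F''_t$ is semistable of slope zero, its saturation $\overline{A_t}$ inside $F''_t$ again has slope zero, hence is a subobject in the abelian category of slope-zero semistable sheaves on $M_t$. In that category $F''_t$ is a direct sum of the five pairwise non-isomorphic stable (thus simple) objects $\cO_{M_t}, L_1,\dots,L_4$, so any saturated slope-zero subsheaf is a sub-sum: $\overline{A_t} = \bigoplus_{i\in I} W_i$ for some $I \subseteq \{0,1,2,3,4\}$, with $W_0 = \cO_{M_t}$ and $W_i = L_i$, and $|I| = \rk(A)$.

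Finally I would run the monodromy argument, which is the crux. Since $L \cong \Sym^2 C$ is irreducible and $\pi|_L$ is finite of degree four (Proposition \ref{prop:geometryofZ2}), the cover $\pi|_L$ is connected, so the monodromy over the open locus of good fibers acts transitively on the four points of $L\cap M_t$, hence — using that the equivalence is $(\P^2)^{\vee}$-linear — transitively on $\{L_1,\dots,L_4\}$, while fixing the distinguished summand $\cO_{M_t}$ coming from the global sub-line bundle $\cO_M(2f)$. As $A$ is globally defined, transporting $A_t$ around a loop in the base returns the same subsheaf while permuting the labels of the summands, so the index set $I$ is monodromy-invariant; transitivity then forces $I \cap \{1,2,3,4\}$ to be empty or all of $\{1,2,3,4\}$. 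Combined with $0 < \rk(A) = |I| < 5$, this leaves only $I = \{0\}$ (rank one) or $I = \{1,2,3,4\}$ (rank four), which is the asserted dichotomy. I expect the genuine obstacle to be this last step: making precise that a global subsheaf has a locally constant, hence monodromy-invariant, fiberwise decomposition type, and that the monodromy really acts transitively on the four line bundles; the slope-zero input from Proposition \ref{prop:determinant} is indispensable, since it is what guarantees that $\overline{A_t}$ is a sub-sum in the first place.
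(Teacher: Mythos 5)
Your proposal is correct in substance, and its second half takes a genuinely different route from the paper. The first half is the same: restrict to a general fiber, use Proposition \ref{prop:determinant} to force slope zero on the fiber, and conclude that $A_t$ is a sub-sum of the five pairwise non-isomorphic degree-zero line bundles (the paper asserts this in one line; your saturation argument in the abelian category of slope-zero semistable sheaves is the right justification for it). Where the paper then applies $\Phi^{-1}$ and studies supports, you argue by monodromy of the degree-four cover $\pi|_L : L \to (\P^2)^{\vee}$. These are two incarnations of the same geometric input --- irreducibility of $L \cong \Sym^2 C$ together with Proposition \ref{prop:geometryofZ2}, since connectedness of the cover is exactly transitivity of the monodromy. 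What the paper's route buys is that the step you correctly flag as the genuine obstacle becomes automatic: after applying $\Phi^{-1}$, your index set $I(t)$ is literally the fiberwise support of the coherent sheaf $\Phi^{-1}(A)$, which is Zariski-closed for free; a two-dimensional closed subset of $Z = P' \cup L$ dominating the base must be a union of irreducible components of $Z$, and comparing the degrees $1$ and $4$ of $P'$ and $L$ over the base with $0 < \rk(A) < 5$ yields the dichotomy immediately, with no covering-space argument needed.

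Your flagged step is nevertheless fillable without $\Phi^{-1}$. Over the \'etale locus $\widetilde{U} \subset L$ of the cover, the $(\P^2)^{\vee}$-linear kernel $\cP$ gives a family of line bundles $\cL_z$ on the fibers indexed by $z \in \widetilde{U}$, as in Lemma \ref{Lem:restrictingPhi}; semicontinuity of $\hom(\cL_z, A_{\pi(z)})$ makes the selected locus $\Sigma \subset \widetilde{U}$ closed, and likewise the locus $\Sigma_0 \subset U$ where the summand $\cO_{M_t}$ from \eqref{eq:sequenceF''} is selected. Since the two fiberwise counts sum to the constant $\rk(A)$ and both counting functions are upper semicontinuous, each is forced to be locally constant, so $\Sigma$ is open and closed in the connected cover $\widetilde{U}$, hence empty or all of it --- which is precisely your transitivity conclusion, made rigorous. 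So the proposal is sound; the monodromy heuristic just needs this closed-plus-constant-count bookkeeping (or, more cleanly, the paper's support trick) to become a complete proof.
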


\begin{proof}
On a general fiber $M_t$ we can write
\[
F''_t = L_{t,0} \oplus L_{t,1} \oplus \dots \oplus L_{t,4},
\]
where $L_{t,0} = \cO_{M_t}$, and $L_{t,i}$ are non-trivial line bundles of degree zero. 
The restriction $A_t$ has the same slope as $F''_t$, hence it is a sub-sum of these line bundles,
\[
A_t = L_{t,i_1} \oplus \dots \oplus L_{t,i_r}.
\]
Taking $\Phi^{-1}$, Lemma \ref{Lem:restrictingPhi} gives
\[
\Phi^{-1}(A)|_{M_t} = i_{M_t,*}\Phi_{\cP_t}^{-1}(A_t) =\cO_{M_t,[L_{t,i_1}]} \oplus \dots \oplus \cO_{M_t,[L_{t,i_r}]}.
\]
We deduce that over an open $U \subset \P^2$, the support $\Supp \Phi^{-1}(A) \subseteq Z$ and it is finite over the base of degree $r$. Since $r < 5$ by assumption $\Supp \Phi^{-1}(A)$ it not equal to the whole $Z$. 
Hence it must be one of the two components, giving the dichotomy in the statement.  
\end{proof}

\begin{Thm}
The bundle $F''$ is slope-stable with respect to any suitable polarization $h$. 
\end{Thm}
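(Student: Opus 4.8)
The plan is to argue by contradiction. Suppose $F''$ is not $h$-stable, so it admits a saturated destabilizing subsheaf $A$ with $0<\rk(A)<5$ and $\mu(A)\ge\mu(F'')$. By Proposition \ref{prop:determinant} we have $c_1(A)=bf$ for some $b\in\Z$, and by Lemma \ref{lem:dichotomyRank} either $\rk(A)=1$ or $\rk(A)=4$. In the suitable chamber $h=f+\varepsilon\lambda$ the leading term of $f\cdot h^3$ is $3\varepsilon^2\int_{M_t}\lambda_t^2>0$, so $f\cdot h^3>0$ and the destabilizing inequality reads $b\ge 3\,\rk(A)$. I would also record at the outset that $F''$ is \emph{simple}: being obtained from $\cO_Z$ by the equivalences $\Phi$ and the two $\P$-twists (Remark \ref{rem:atomicity}), one has $\End(F'')=\Hom(\cO_Z,\cO_Z)=H^0(Z,\cO_Z)=\C$ since $Z=P'\cup L$ is connected and reduced; in particular $F''$ is indecomposable.

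For $\rk(A)=1$ and $b\ge 3$, passing to the reflexive hull turns the inclusion into a nonzero map of the line bundle $\cO_M(bf)=A^{\vee\vee}$ (using $\NS(M)=\Z\lambda\oplus\Z f$) into $F''$. Composing with the quotient $F''\to G'$ of \eqref{eq:sequenceF''} and restricting to a general Lagrangian fibre $M_t$, Lemma \ref{Lem:restrictingPhi} shows $G'|_{M_t}$ is a sum of \emph{nontrivial} degree-zero line bundles while $\cO_M(bf)|_{M_t}=\cO_{M_t}$ is trivial; hence the composite $\cO_M(bf)\to G'$ vanishes on $M_t$, and since $G'$ is torsion-free it vanishes identically. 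Therefore $\cO_M(bf)\to F''$ factors through the sub-line-bundle $\cO_M(2f)$, producing a nonzero element of $\Hom(\cO_M(bf),\cO_M(2f))=H^0\big(\cO_M((2-b)f)\big)=H^0(\P^2,\cO(2-b))=0$, a contradiction.

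For $\rk(A)=4$, Lemma \ref{lem:dichotomyRank} gives $\Supp\Phi^{-1}(A)=L$, so on a general fibre $A_t=\bigoplus_{i=1}^4 L_i$ omits the summand $\cO_{M_t}=\cO_M(2f)|_{M_t}$ (which corresponds to the point of $P'$). Hence $A\cap\cO_M(2f)=0$ and the quotient map of \eqref{eq:sequenceF''} restricts to an injection $A\hookrightarrow G'$ with torsion cokernel $T$; comparing first Chern classes gives $c_1(A)=c_1(G')-c_1(T)=13f-c_1(T)$, so $b\le 13$ and thus $b\in\{12,13\}$. If $b=13$ then $c_1(T)=0$ and $T$ has codimension $\ge 2$; since $A$ is saturated in the locally free sheaf $F''$ it is reflexive, and an injection of reflexive sheaves whose cokernel has codimension $\ge 2$ is an isomorphism, so $A\cong G'$. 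This splits \eqref{eq:sequenceF''} as $F''\cong\cO_M(2f)\oplus G'$, contradicting indecomposability.

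The remaining case $b=12$, where $\mu(A)=\mu(F'')$ exactly and $T$ is a rank-one sheaf supported on a fibre-type divisor $\pi^{-1}(\ell)$, is the crux: simplicity no longer suffices, since a simple sheaf can be strictly semistable. I would translate the existence of such an $A$ through $\Phi^{-1}$: writing $\cA:=\Phi^{-1}(A)\subset\Phi^{-1}(G')=\cO_L(-2K^*)$, the lift of $A\hookrightarrow G'$ to $A\hookrightarrow F''$ exists if and only if the extension class of the defining sequence $0\to\cO_{P'}(2)\to\Phi^{-1}(F'')\to\cO_L(-2K^*)\to0$ (the $\Phi^{-1}$ of \eqref{eq:sequenceF''}) restricts to zero on $\cA$; equivalently, the connecting map $H^0(\cO_M(f))=\Hom(\cO_M(2f),\cO_M(3f))\to\Ext^1(G',\cO_M(3f))$ must annihilate the corresponding section $t$. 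The main obstacle is to show this never happens, i.e. that the extension class of \eqref{eq:sequenceF''} is not killed by multiplication by any nonzero $t\in H^0(\cO_M(f))$; I expect this non-degeneracy to follow from the explicit $\P$-twist construction of $F''$ together with the non-splitting of \eqref{eq:sequenceF''} after restriction to $\pi^{-1}(\ell)$, and it is the one step that genuinely uses the geometry of the construction rather than formal slope bookkeeping.
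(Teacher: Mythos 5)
Your rank-one case and your $b=13$ subcase are correct, and they take a somewhat different route from the paper: you use restriction to general fibres plus factorization through $\cO_M(2f)$, and reflexivity plus simplicity, whereas the paper runs an intersection diagram with $\cO_M(2f)$ in the rank-one case and reduces all of the rank-four case to the vanishing of $\Hom(F'',\cO_M(kf))$ for $k\le 3$. But there is a genuine gap, and it sits exactly where you place it: the subcase $\rk(A)=4$, $b=12$ --- equivalently, the vanishing of $\Hom(F'',\cO_M(3f))$ --- is never proved; you only record that you ``expect'' the required non-degeneracy of the extension class of \eqref{eq:sequenceF''}. This case is not a residual technicality: it is the one point where slope bookkeeping, simplicity and formal properties of \eqref{eq:sequenceF''} cannot decide the matter (as you yourself observe, a simple bundle can be strictly semistable), and it is precisely where the paper deploys its main geometric input. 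A proof that stops here has not proved the theorem.

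Here is how the paper closes it; note the mechanism differs from the one you conjecture. First, $\Hom(F'',\cO_M(2f))=0$ comes for free from the second twist: $F''=P^{-1}_{\cO_M(2f)}(F')$, so $\Hom(F'',\cO_M(2f))=\Hom\bigl(F',P_{\cO_M(2f)}(\cO_M(2f))\bigr)=\Hom(F',\cO_M(2f)[-4])=0$, using $P_{\cE}(\cE)\cong\cE[-4]$ from \cite[Lemma 2.5]{huybrechts_thomas06}; multiplying by sections of $\cO_M(f)$ then kills $\Hom(F'',\cO_M(kf))$ for all $k\le 2$. Your write-up never isolates this vanishing, although your own sketch of the $b=12$ case implicitly needs it. For $k=3$, given $\varphi:F''\to\cO_M(3f)$ with kernel $A$, restriction to a general fibre shows $\Hom(\cO_M(2f),A)=0$, so the composite $\cO_M(2f)\to F''\to\cO_M(3f)$ is nonzero, hence injective; under $\Phi^{-1}$ it becomes multiplication by a section of $\cO_{P'}(1)$ cutting out a line $l\subset P'$. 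Since $l\not\subseteq L$, any map $\cO_L(-2K^*)\to\cO_l(3)$ has image supported on the finite set $l\cap L$ inside the torsion-free sheaf $\cO_l(3)$, hence vanishes; therefore $\Phi^{-1}(\varphi)$ factors through $\ker(\cO_{P'}(3)\to\cO_l(3))=\cO_{P'}(2)$, contradicting $\Hom(F'',\cO_M(2f))=0$. So the decisive point is a support argument on the Lagrangian side of $\Phi$, not the non-splitting of \eqref{eq:sequenceF''} on $\pi^{-1}(\ell)$; in your framing it amounts to showing that the connecting map $\Hom(\cO_M(2f),\cO_M(3f))\to\Ext^1(G',\cO_M(3f))$ is injective, and the paper's $l\not\subseteq L$ computation is in effect exactly that statement.
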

\begin{proof}
Recall that $\rk(F'')=5$ and $c_1(F'')=15f$. Let 
\[
0 \to A \to F'' \to B \to 0,
\]
be a slope destabilizing short exact sequence. 
We assume $A$ saturated, so $B$ is torsion-free. 
By Lemma \ref{lem:dichotomyRank} either $\rk(A) = 1$ or $\rk(A) = 4$. 

\medskip
\noindent
\textbf{Case 1.} Assume that $\rk(A) = 1$. 
Consider the following commutative diagram with exact rows and columns
% https://q.uiver.app/?q=WzAsMjEsWzEsMSwiXFxjT19NIFxcY2FwIEEiXSxbMiwxLCJBIl0sWzMsMSwiQS8oXFxjT19NIFxcY2FwIEEpIl0sWzEsMiwiXFxjT19NIl0sWzIsMiwiRiJdLFszLDIsIkciXSxbMSwzLCJcXGNPX00vKFxcY09fTSBcXGNhcCBBKSJdLFsyLDMsIkIiXSxbMywzLCJCLyhcXGNPX00vKFxcY09fTSBcXGNhcCBBKSkiXSxbMSw0LCIwIl0sWzMsMCwiMCJdLFszLDQsIjAiXSxbMiw0LCIwIl0sWzIsMCwiMCJdLFsxLDAsIjAiXSxbMCwxLCIwIl0sWzAsMiwiMCJdLFswLDMsIjAiXSxbNCwxLCIwIl0sWzQsMiwiMCJdLFs0LDMsIjAiXSxbMCwzXSxbMyw2XSxbNiw5XSxbMCwxXSxbMiw1XSxbMyw0XSxbNCw1XSxbNiw3XSxbNyw4XSxbNSw4XSxbMSw0XSxbNCw3XSxbMSwyXSxbMTAsMl0sWzgsMTFdLFs3LDEyXSxbMTMsMV0sWzE0LDBdLFsxNSwwXSxbMTYsM10sWzE3LDZdLFsyLDE4XSxbNSwxOV0sWzgsMjBdXQ==
% https://q.uiver.app/?q=WzAsMTgsWzEsMSwiXFxjT19NKDJmKSBcXGNhcCBBIl0sWzIsMSwiQSJdLFszLDEsIkEvKFxcY09fTSgyZikgXFxjYXAgQSkiXSxbMSwyLCJcXGNPX00oMmYpIl0sWzIsMiwiRicnIl0sWzMsMiwiRyciXSxbMSwzLCJcXGNPX00oMmYpLyhcXGNPX00oMmYpIFxcY2FwIEEpIl0sWzIsMywiQiJdLFsxLDQsIjAiXSxbMywwLCIwIl0sWzIsNCwiMCJdLFsyLDAsIjAiXSxbMSwwLCIwIl0sWzAsMSwiMCJdLFswLDIsIjAiXSxbMCwzLCIwIl0sWzQsMSwiMCJdLFs0LDIsIjAiXSxbMCwzXSxbMyw2XSxbNiw4XSxbMCwxXSxbMiw1XSxbMyw0XSxbNCw1XSxbNiw3XSxbMSw0XSxbNCw3XSxbMSwyXSxbOSwyXSxbNywxMF0sWzExLDFdLFsxMiwwXSxbMTMsMF0sWzE0LDNdLFsxNSw2XSxbMiwxNl0sWzUsMTddXQ==
\[\begin{tikzcd}
	& 0 & 0 & 0 \\
	0 & {\cO_M(2f) \cap A} & A & {A/(\cO_M(2f) \cap A)} & 0 \\
	0 & {\cO_M(2f)} & {F''} & {G'} & 0 \\
	0 & {\cO_M(2f)/(\cO_M(2f) \cap A)} & B \\
	& 0 & 0
	\arrow[from=2-2, to=3-2]
	\arrow[from=3-2, to=4-2]
	\arrow[from=4-2, to=5-2]
	\arrow[from=2-2, to=2-3]
	\arrow[from=2-4, to=3-4]
	\arrow[from=3-2, to=3-3]
	\arrow[from=3-3, to=3-4]
	\arrow[from=4-2, to=4-3]
	\arrow[from=2-3, to=3-3]
	\arrow[from=3-3, to=4-3]
	\arrow[from=2-3, to=2-4]
	\arrow[from=1-4, to=2-4]
	\arrow[from=4-3, to=5-3]
	\arrow[from=1-3, to=2-3]
	\arrow[from=1-2, to=2-2]
	\arrow[from=2-1, to=2-2]
	\arrow[from=3-1, to=3-2]
	\arrow[from=4-1, to=4-2]
	\arrow[from=2-4, to=2-5]
	\arrow[from=3-4, to=3-5]
\end{tikzcd}\]
The intersection $A \cap \cO(2f)$ is defined as the kernel of the map
\[
A \oplus \cO(2f) \to F'', \ (a,x) \mapsto a-x.
\]
Restricting to the general fiber, both $A$ and $\cO(2f)$ become trivial, so wee see that the intersection is non-trivial, because $F''|_t$ has only one trivial summand. 
Since $A$ has rank one, the quotient sheaf $A/(\cO_M(2f) \cap A)$ has rank $0$.
It and embeds into $G'$, which is locally free, so it is zero, which gives
\[
A \subset \cO_M(2f).
\]
Using that $B$ is torsion-free, the same argument yields $\cO_M(2f) \subset A$. 
We deduce that $A = \cO_M(2f)$, which is not destabilizing. 

\medskip
\noindent
\textbf{Case 2.} Assume that $\rk(A) = 4$. The quotient $B$ is a torsion-free rank one sheaf, so it injects into its double dual $B^{\vee \vee}$, which is a line bundle on $M$ by \cite[Proposition 1.11]{hartshorne80}. 
By Proposition \ref{prop:determinant}, it suffices to show that $\Hom(F'',\cO(kf))=0$ vanishes for every $k \leq 3$. 
By construction we have 
\[
\Hom(F'',\cO(2f)) = \Hom(P^{-1}_{\cO_{M}(2f)}(F'),\cO_{M}(2f)) = 0.
\]
It follows using $h^0(M,\cO_M(f)) \neq 0$ that $\Hom(F'',\cO_M(kf))= 0$ for every $k \leq 2$. Hence it remains to show that $\Hom(F'',\cO_M(3f)) = 0$. 

Let $\varphi: F'' \to \cO_M(3f)$ be a morphism, and let $D$ be its kernel.  
Restricting to a general fiber $M_t$ we see that $\Hom(\cO_M(2f),D) = 0$, because $D_t$ splits as a sum of four non trivial line bundles of degree $0$. 
This implies that the composition
\[
\cO_M(2f) \to F'' \to \cO_M(3f)
\]
is not zero, hence it is injective. 
Applying $\Phi^{-1}$ we obtain a diagram
% https://q.uiver.app/?q=WzAsMTAsWzAsMCwiMCJdLFsxLDAsIlxcY09fe1AnfSgyKSJdLFsyLDAsIlBeey0xfV97XFxjT197UCd9KDIpfShQXnstMX1fe1xcY09fe1AnfX0oXFxjT19aKSkiXSxbMywwLCJcXGNPX3tMfSgtMkteKikiXSxbMSwxLCJcXGNPX3tQJ30oMikiXSxbMiwxLCJcXGNPX3tQJ30oMykiXSxbMywxLCJcXGNPX3tsfSgzKSJdLFs0LDAsIjAiXSxbNCwxLCIwIl0sWzAsMSwiMCJdLFsxLDQsIiIsMCx7ImxldmVsIjoyLCJzdHlsZSI6eyJoZWFkIjp7Im5hbWUiOiJub25lIn19fV0sWzAsMV0sWzEsMl0sWzIsM10sWzMsN10sWzQsNV0sWzUsNl0sWzYsOF0sWzksNF0sWzIsNSwiXFxQaGleey0xfShcXHZhcnBoaSkiXSxbMyw2XV0=
\[\begin{tikzcd}
	0 & {\cO_{P'}(2)} & {P^{-1}_{\cO_{P'}(2)}\left(P^{-1}_{\cO_{P'}}\left(\overline{\cL}\right)\right)} & {\cL(-2K^*)} & 0 \\
	0 & {\cO_{P'}(2)} & {\cO_{P'}(3)} & {\cO_{l}(3)} & 0
	\arrow[Rightarrow, no head, from=1-2, to=2-2]
	\arrow[from=1-1, to=1-2]
	\arrow[from=1-2, to=1-3]
	\arrow[from=1-3, to=1-4]
	\arrow[from=1-4, to=1-5]
	\arrow[from=2-2, to=2-3]
	\arrow[from=2-3, to=2-4]
	\arrow[from=2-4, to=2-5]
	\arrow[from=2-1, to=2-2]
	\arrow["{\Phi^{-1}(\varphi)}", from=1-3, to=2-3]
	\arrow[from=1-4, to=2-4]
\end{tikzcd}\]
where $\cO_{l}$ is the structure sheaf of a line $l \subset P'$, i.e. of the zero locus of a section of $\cO_{P'}(1)$. Since $l \not \subseteq L$, the map $\cL(-2K^*) \to \cO_{l}(3)$ is zero. So, the central map factors through $\cO_{P'}(2)$, but $\Hom(F'',\cO(2f)) = 0$ is zero, hence $\phi$ is zero. 
\end{proof}

For now we proved that $F''$ is stable for with respect to suitable polarizations. 
In order to prove the main result we need to deal with other polarizations, the next result allows us to do so. 

\begin{Lem}
    Let $X$ be a projective \HK manifold, and $F$ a modular vector bundle on $X$ with Mukai vector $\vv$.
    Assume that $c_1(F) = 0$ and that $F$ is stable with respect to a $a(F)$-generic polarization $h$.
    Let $Y$ be a projective deformation of $X$, and let $h'$ be $a(F)$-generic. 
    Then there exists a vector bundle $F'$ on $Y$ which is $h'$-stable and with Mukai vector $\vv$.
\end{Lem}

\begin{proof}
    The proof is simply a refinement of the proof of \cite[Theorem 3.4]{markman21}, where we also keep track of the polarization. 
    Recall that, if $F$ is modular and $\omega \in \cK(X)$ is a K\"ahler class such that $F$ is $\omega$-stable, we can deform $F$ along the twistor line $\P^1_{\omega}$ spanned by $\omega$ by \cite[Theorem 3.19]{verbitsky99}.
    The assumption $c_1(F) = 0$ implies that even when we deform along a twistor line, the bundle remains untwisted. 
    The deformed bundle is stable with respect to the canonical K\"ahler class $\omega_t$ on every fiber $X_t$ of the twistor line. 
    Since the walls in the K\"ahler cone are defined by algebraic classes, we can find a K\"ahler class $\omega$ with the following properties:
    \begin{enumerate}
        \item The bundle $F$ is slope-stable with respect to $\omega$. 
        \item The twistor line $\P^1_\omega$ is generic, in the sense that the general element has trivial Picard group.
    \end{enumerate}
    We can do the same for $h'$, obtaining a K\"ahler class $\omega'$ on $Y$ with the same properties as above. 
    Now, we can deform $F$ along the twistor deformation $\cX_{\omega} \to \P^1_{\omega}$.
    In particular, if we choose a general element $X_1 \in  \P^1_{\omega}$, we obtain a modular bundle $F_1$ with the same invariants as $F$ living on $X_1$.
    Since $X_1$ has trivial Picard $F_1$ is stable with respect to any K\"ahler class by \cite[Lemma 6.15]{markman20}. 
    
    Denote by $X_{\mathrm{last}}$ a general element of $\P^1_{\omega'}$. Up to the choice of markings on $X_1$ and $X_{\mathrm{last}}$, we can connect them through a chain of twistor lines whose intersection points have trivial Picard group by \cite[Theorem 3.2 and 5.2e]{verbitsky96} (see also \cite[Theorem 6.14]{markman20}).
    The bundle $F_1$ deforms on this chain of twistor lines to a bundle $F_{\mathrm{last}}$ on $X_{\mathrm{last}}$, which again is stable with respect to any K\"ahler class. 
    Deforming $F_{\mathrm{last}}$ back along $\P^{1}_{\omega'}$ we get the desired bundle $F'$ on $Y$ which is $\omega'$-stable. 
    Since $\omega'$ and $h'$ live in the same open chamber, $F'$ is also $h'$-stable. 
\end{proof}

\begin{Rem}
    Notice that this works even if $Y = X$. 
    In this way, starting from a stable bundle, we can prove existence of a stable bundle for every generic polarization without the need to understand wall-crossing. 
\end{Rem}

\begin{proof}[Proof of Theorem \ref{thm:mainTheorem}]
The vector bundle $F''$ constructed above is atomic and stable with respect to a suitable polarization.
By Proposition \ref{prop:MukaiVector} we can twist $F''$ to obtain a stable atomic vector bundle $F_0$ with Mukai vector 
\[
v(F_0) = 5\left(1-\frac{3}{4}\sfq_2 + \frac{9}{32}\mathfrak{pt}\right).
\]
So $F_0$ satisfies the assumptions of the above lemma, and we can deform it to a vector bundle on any K\"ahler deformation of $X$, which is stable with respect to any $v(F_0)$-generic polarization.

The $\Ext$ algebra remains constant along these deformations by \cite[Proposition 6.3]{verbitsky08}.
The statement about the Ext groups is Corollary \ref{cor:YonedaSkewSymmetric}. 

To prove smoothness of the deformation space we argue as follows.
The main result in \cite{meazziniOnorati22} (or \cite[Theorem 6.1]{beckmann22}) gives formality of the algebra $\RHom(F_0,F_0)$. So the obstruction to lifting a first order deformation is the Yoneda square, which vanishes by Corollary \ref{cor:YonedaSkewSymmetric}. 
\end{proof}

\section{The moduli space}\label{sec:moduliSpace}
The sheaf $F''$ constructed in Section \ref{sec:ssreduction} is slope (hence Gieseker) stable for any suitable polarization on $M$.
Let $\fM$ be the irreducible component of the moduli space of Gieseker stable sheaves containing $F''$. 
Clearly, this is birational to a component of the moduli space of $F_0$.  
A priori, this component could depend on the choice of the curve $C \in |2H|$ used in the construction of $F''$, but we will show in Proposition \ref{prop:birationalMap} that it is not the case. 

%In this section we study the component $\fM$.
%We show that the smooth locus is endowed with a natural closed 2-form, and that $\fM$ is birational to a \HK variety of type OG10. 
%Our ultimate goal, which we fail to accomplish in this paper, is to show that $\fM$ is itself a \HK variety of type OG10.
\begin{Rem}
The component $\fM$ contains only Gieseker \emph{stable} sheaves; that is every Gieseker semistable sheaf in $\fM$ is also stable.
Since every $\P$-twist acts as the identity in cohomology, the Euler characteristic is unaffected by the semistable reduction. 
Example \ref{Ex:mixedexts} gives that $\chi(G) = -2$. It follows that 
\[
\chi(F'') = \chi(F) = \chi(G) + \chi(\cO_M) = 1,
\]
which is coprime with the rank, which guarantees Gieseker stability. 
\end{Rem}

We generalize the construction of Section \ref{sec:ssreduction} by considering certain line bundles of degree zero supported on $L \subset M$ constructed from line bundles of degree zero on curves in $|2H|$.
Line bundles of degree zero supported on curves in $|2H|$ are generic points of the singular moduli space $M_S(0,2H,-4)$.
By a celebrated result by O'Grady \cite{ogrady99} the singularities of $M_S(0,2H,-4)$ are symplectic. 
The symplectic resolution $\widetilde{M}_S(0,2H,-4)$ is a \HK variety of OG10 type, and the composition
\[
\widetilde{M}_S(0,2H,-4) \to M_S(0,2H,-4) \to |2H|
\]
is a Lagrangian fibration. 

\begin{Prop}\label{prop:birationalMap}
There is a birational map
\[
\theta: \widetilde{M}_S(0,2H,-4) \dashrightarrow \mathfrak{M} 
\]
\end{Prop}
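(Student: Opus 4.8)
The plan is to produce $\psi$ by running the construction of Sections \ref{sec:Degeneration}--\ref{sec:ssreduction} in families, replacing the structure sheaf $\cO_Z$ by the line bundles on $Z$ described in the introduction. The key observation that makes the construction uniform is that every curve in $|2H|$ is the preimage $p^{-1}(K)$ of a conic $K\subset\P^2$ (since $p_*\cO_S=\cO\oplus\cO(-3)$ gives $h^0(S,2H)=h^0(\P^2,\cO(2))$), so the curve appearing in Theorem \ref{thm:collino} is the general member of $|2H|$ and, as the hyperplane $H\subset\P^5$ varies, the resulting Lagrangians $L=\Sym^2 C$ sweep out a family inside the \emph{fixed} moduli space $M=M_S(0,H,-1)$. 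Let $U\subset\wt M_S(0,2H,-4)$ be the dense open locus of line bundles $\xi$ of degree zero on smooth curves $C=C_\xi\in|2H|$. For such $\xi$ I set $\cL_\xi:=\Sym^2\xi$, the descent of $\xi\boxtimes\xi$ along $C\times C\to\Sym^2 C$; since $K^*\cong\P^1$ is rational, $\cL_\xi|_{K^*}$ is trivial, so $\cL_\xi$ glues with $\cO_{P'}$ to a line bundle $\overline{\cL_\xi}$ on $Z=P'\cup L$, unique up to isomorphism. I then define $\psi(\xi):=[F''_\xi]$, where $F''_\xi:=P^{-1}_{\cO_M(2f)}P^{-1}_{\cO_M}\Phi(\overline{\cL_\xi})$.

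The first step is to verify that $F''_\xi$ is again a Gieseker stable atomic bundle in the component $\fM$. Local freeness of $\Phi(\overline{\cL_\xi})$ follows from Proposition \ref{prop:locallyFreeness} applied to the vector bundle $\cL_\xi$ on $L$, using that $\pi|_L$ is finite (Proposition \ref{prop:geometryofZ2}, which holds for general $C$ by the same fibrewise argument). The two $\P$-twists apply verbatim: their only inputs are that $\cO_{P'}$ and $\cO_{P'}(2)$ are $\P$-objects and that the relevant $\Ext$-module is $\C[-1]\oplus\C[-3]$, and the latter follows from Example \ref{Ex:mixedexts}, because $\cL_\xi|_{K^*}$ trivial forces $\Ext^k(\cO_{P'},\cL_\xi(-K^*))\cong H^{k-1}(K^*,\C)$ exactly as in the untwisted case. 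The stability proof of Section \ref{sec:ssreduction} (Proposition \ref{prop:determinant} and the dichotomy of Lemma \ref{lem:dichotomyRank}) depends only on the shape of the restriction $F''_\xi|_{M_t}$ to a general Lagrangian fibre, which by Lemma \ref{Lem:restrictingPhi} is again the sum of a trivial and four nontrivial degree-zero line bundles. As $\wt M_S(0,2H,-4)$ is irreducible and the $F''_\xi$ form a connected family containing $F''=F''_{\cO_C}$, every $[F''_\xi]$ lies in the single component $\fM$.

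Next I would make $\psi$ an honest rational map: the relative Poincar\'e sheaf over $|H|$ gives a family version of $\Phi$, the $\P$-twists are Fourier--Mukai, and the universal line bundle on the universal $P'\cup\Sym^2 C$ over $U\times M$ yields a $U$-flat family of stable sheaves on $M$, hence a morphism $U\to\fM$. For birationality, I first show $\psi$ is injective on $U$. The assignment $\xi\mapsto\overline{\cL_\xi}$ is injective: the support of the component off $P'$ recovers $C$, and $\Sym^2$ is injective on $\Pic^0(C)$, since $\Sym^2\xi\cong\Sym^2\eta$ pulls back to $\xi\boxtimes\xi\cong\eta\boxtimes\eta$ on $C\times C$, whose restriction to $C\times\{\mathrm{pt}\}$ gives $\xi\cong\eta$. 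Because $\Phi$, $P_{\cO_M}$ and $P_{\cO_M(2f)}$ are autoequivalences, $F''_\xi\cong F''_\eta$ implies $\overline{\cL_\xi}\cong\overline{\cL_\eta}$, so $\psi$ is injective. Since $\dim U=10=\dim\fM$ and $\fM$ is irreducible, $\overline{\psi(U)}$ is a $10$-dimensional closed subvariety of $\fM$, hence equal to $\fM$; thus $\psi$ is dominant and generically injective, and therefore birational in characteristic zero (the inverse being induced by the inverse equivalences).

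The hard part is the globalization in the second step: one must construct the universal reducible Lagrangian $P'\cup\Sym^2 C$ and the universal line bundle $\overline{\cL_\xi}$ over $U$, and check that applying the relative $\Phi$ and the two $\P$-twists produces a flat family of \emph{sheaves} rather than complexes of varying amplitude. The fibrewise local-freeness and stability estimates above guarantee that each fibre is a stable bundle, but flatness over $U$ and the precise determination of the locus of definition -- excluding singular curves, reducible conics, and the exceptional divisor of the symplectic resolution -- will require genuine care. A secondary, more routine point is to confirm that $U$ is dense in $\wt M_S(0,2H,-4)$ and that the classifying morphism lands in the expected component, so that $\psi$ is defined on a dense open set as claimed.
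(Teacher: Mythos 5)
Your proposal is correct and follows the paper's own proof in essentially every step: the same gluing of $\Sym^2\xi$ with $\cO_{P'}$ along the trivialization over the rational curve $K^*$, the same map $\xi\mapsto P^{-1}_{\cO_M(2f)}\circ P^{-1}_{\cO_M}\circ\Phi(\overline{\cL}_\xi)$, the same observation that the Ext computations and the semistable-reduction/stability arguments of Sections \ref{sec:extGroups}--\ref{sec:ssreduction} only see the restriction to $K^*$ and so carry over unchanged, and birationality via injectivity plus the dimension count $\dim = 10$. The only divergence is in the injectivity step, where the paper restricts $L_C^{(2)}$ to the diagonal $\Delta\subset\Sym^2 C$ while you pull back along $C\times C\to \Sym^2 C$ and restrict to a slice $C\times\{\mathrm{pt}\}$ -- your variant is in fact the cleaner one, since the diagonal restriction literally returns $L_C^{\otimes 2}$ -- and your closing concerns about the universal family and flatness (as well as checking atomicity of $\overline{\cL}_\xi$, which the paper gets from topological triviality of $\cL_\xi$ forcing $v(\overline{\cL}_\xi)=v(\cO_Z)$) are legitimate but are exactly the points the paper treats as routine and leaves implicit.
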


\begin{proof}
Let $L_C$ be a line bundle of degree zero supported on a smooth curve $C \in |2H|$. 
Consider the $\Sigma_2$-equivariant line bundle
\[
 L_C \boxtimes L_C \in \Pic^0(C \times C).
\]
Being equivariant, it descends along the quotient $C \times C \to L$.
Denote by $\cL_C \in \Pic^0(L)$ descended line bundle. 
The rational map in the statement is then given by
\[
\widetilde{M}_S(0,2H,-4) \dashrightarrow \mathfrak{M}, \quad L_C \mapsto \Theta (\overline{\cL}_C)
\]
where $\Theta$ is the equivalence $P^{-1}_{\cO_M(2f)} \circ  P^{-1}_{\cO_M} \circ \Phi$, and ${\overline{\cL}_C}$ denotes the gluing of $\cL_C$ with the structure sheaf of $P'$. 
The fact that this is well defined on the open of smooth curves is the content of Theorem \ref{thm:mainTheorem}.
The map $\theta$ is injective because the restriction of the line bundle $\cL_C$ to the diagonal $\Delta \subset \Sym^2 C$ recovers the original $L_C$. 
\end{proof}

\subsection{Symplectic form}
Moduli spaces of stable sheaves on holomorphic symplectic surfaces are naturally equipped with a holomorphic symplectic form indudced by Serre duality. This was first observed by Mukai in \cite{mukai84}.
In the particular case of a smooth point in the moduli space $M_S(0,2H,-4)$, the tangent space to a point $[L_C]$ is identified to
\begin{equation*}\label{eq:tangentOG101}
    \Ext^1_S(L_C,L_C) \cong H^1(C,\C). 
\end{equation*}
Then, via this isomorphism the symplectic form is the cup product on $H^1(C,\C)$. 

More recently, Kuznetsov and Markushevich \cite{kuznetsovMark09} generalized this construction, and produced a closed 2-form the smooth locus of any moduli space of simple sheaves on an algebraic variety, which is not necessarily non-degenerate.
Here we briefly review the definition.

Recall that, for any $F \in \Db(M)$ and vector bundle $E$ on $M$, there is a trace map 
\[
\Tr_F : \Ext^k(F,F \otimes E) \to H^k(M,E).
\]
This was used in \cite{buchweitzFlenner03} to define the \emph{semiregularity map} for $F$
\[
\sigma: \Ext^2(F,F) \to \bigoplus_{p \geq 0} H^{p + 2}(M,\Omega^p_M), \quad \varphi \mapsto \Tr_F\left(\exp(-\At(F)) \circ \varphi\right),
\]
where $\At(F)$ is the \emph{Atiyah class} of $F$. 
Let $[F] \in \mathfrak{M}_{\mathrm{sm}}$ be a smooth point. Its tangent space is given by $\Ext^1(F,F)$ and the Yoneda pairing
\[
\Ext^1(F,F) \times \Ext^1(F,F) \to  \Ext^2(F,F), \quad (a,b) \mapsto a \circ b,
\]
is skew-symmetric. 
In \cite{kuznetsovMark09} the authors define for every $\omega \in H^*(M,\C)$, the following 2-form on $\mathfrak{M}_{\mathrm{sm}}$
\begin{equation}\label{eq:sympformViaSemireg}
   (a,b) \mapsto \int_M{\sigma(a \circ b) \cup \omega},
\end{equation}
and prove that it is closed.

We can write this form using the language of obstruction map.

\begin{Defi}
   For every $\eta \in HH^2(M)$ we define the 2-form 
\begin{equation}\label{eq:sympform}
    \alpha_{\eta}(a,b) \coloneqq \int_M{\Tr_F \left(\chi_F(\eta) \circ a \circ b \right) \cup \sigma^2_M}, 
\end{equation} 
where $\sigma_M \in H^0(M,\Omega^2_M)$ is the symplectic form. 
\end{Defi}

\begin{Lem}
    For every $\eta \in HH^2(M)$ there is an $\omega \in H^*(X,\C)$ such that 
\[
 \alpha_{\eta}(a,b) = \int_M{\sigma(a \circ b) \cup \omega}.
\]
In particular $\alpha_{\eta}$ is a closed 2-form on $\mathfrak{M}_{\mathrm{sm}}$.
\end{Lem}

\begin{proof}
Under the HKR isomorphism $HH^2(M) \cong HT^2(M)$, the obstruction map $\chi_F(-)$ becomes identified with
\[
HT^2(M) \to \Ext^2(F,F), \quad \eta \mapsto (\id_F \otimes \eta) \circ \exp(\At(F)),
\]
by \cite[Proposition 6.1]{toda09}.
We have 
\begin{align*}
\Tr_F(\chi_F(\eta) \circ a \circ b) &=  \Tr_F((\id_F \otimes \eta) \circ \exp(\At(F)) \circ a \circ b) \\ 
&= \eta \circ \Tr_F(\exp(\At(F)) \circ a \circ b),
\end{align*}
by linearity of the trace map. 
Up to changing signs of the graded pieces of $\eta$, this is equal to $\eta \circ \sigma(a \circ b)$. 
Then, by Poincaré duality there is a class $\omega \in H^*(X,\C)$, depending only on $\eta$, such that
\[
\int_M{(\eta \circ \sigma(a \circ b)) \cup \sigma^2_ M} = \int_M{\sigma(a \circ b) \cup \omega}.
\]
Closedness then follows from  \cite[Theorem 2.2]{kuznetsovMark09}.
\end{proof}

\begin{Rem}\label{rem:nonzero1obstructed}
On the $1$-obstructed locus, there is an $\eta \in HH^2(M)$ such that $\alpha_{\eta}$ is everywhere non-zero. Indeed by \cite[Lemma 4.2]{beckmann22} there is an inclusion 
\[
\Ker \chi_F \subseteq \Ker \chi_F^{\mathrm{coh}}, 
\]
which is an equality on the $1$-obstructed locus. 
In particular, $\Ker \chi_F$ depends only on $v(F)$. So, if $\chi_F(\eta) \neq 0$ for some 1-obstructed $F$, then is stays non-zero for all 1-obstructed sheaves in $\fM$.
\end{Rem}

It follows that, on the 1-obstructed locus there is a unique, up to a constant, 2-form of the form $\alpha_{\eta}$.
In Theorem \ref{thm:symplecticForm} we prove that the image of $\theta$ consists of 1-obstructed sheaves. 
We fix $\eta \in HH^2(M)$ such that $\alpha_{\eta}$ is everywhere non-zero on the image of $\theta$. 

%The following conjecture is what motivated our work. 
%A positive answer would make $\fM$ the first interesting example of a moduli space on a \HK fourfold, while at the same time providing a modular interpretation for OG10. 

\begin{Con*}%\label{con:OG10}
    The irreducible component $\fM$ is a smooth \HK variety of type OG10, with symplectic form given by $\alpha_{\eta}$. 
\end{Con*}

The biggest roadblock to prove the conjecture is understanding smoothness of $\mathfrak{M}$. 
In \cite[Conjecture B]{beckmann22} is conjectured that, at least for vector bundles, $1$-obstructedness implies smoothness. 
On the intersection of the 1-obstructed and the locally-free locus, the form $\alpha_{\eta}$ is also symplectic as explained in \cite[Section 8]{beckmann22}. 
This is a generalization of the classical result by Kobayashi \cite{kobayashi86}. 
In this section we make partial progress towards the conjecture above by proving Theorem \ref{thm:introBirational}.  

\subsection{Obstruction map}
In order to compare the symplectic forms on the source and target of $\theta$, it becomes necessary to understand the obstruction map for a sheaf in $\fM$.
Our goal is to show that it is one dimensional, and that under the isomorphism \eqref{eq:Ext2wedge} it is the line spanned by the dual of the cup product on $H^1(C,\C)$.

Let $C$ a smooth curve, and $\cL \in \Pic^0(L)$ any degree zero line bundle (they are all obtained as symmetrization of a line bundle on $C$).
The obstruction map is compatible with derived equivalences, so we need to compute
\[
\chi_{\overline{\cL}} : HH^2(M) \to \Ext^2(\overline{\cL},\overline{\cL}). 
\]
Via the HKR isomorphism there is a direct sum decomposition 
\[
HH^2(M) \cong H^0(M,\bwed^2 T_M) \oplus H^1(M,T_M) \oplus H^2(M,\cO_M). 
\]
The obstruction map vanishes on $H^2(M,\cO_M)$, because $Z \subset M$ is Lagrangian. 
The next Lemma deals with the first summand. 

\begin{Lem}\label{lem:obstructionNonCommutative}
Under $\chi_{\overline{\cL}}$, the image of $H^0(M,\bwed^2T_M)$ is contained in the image of $H^1(M,T_M)$.
\end{Lem}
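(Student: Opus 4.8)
The plan is to compute the obstruction map summand by summand using the Atiyah-class description, and to trade the contraction by the Poisson bivector for a contraction by a vector-field class by means of the holomorphic symplectic form. Since the obstruction map is invariant under the derived equivalence $\Psi$, I first work directly with the line bundle $\overline{\cL}_C$ on the normal crossings Lagrangian $Z=P'\cup L$, as the statement already does.

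Recall that the action of $HH^2(M)$ on $\id_E$ is given, on each HKR summand, by contraction with powers of the Atiyah class $\At(E)\in\Ext^1(E,E\otimes\Omega_M)$: on $H^1(M,T_M)$ one has $\chi_E(\kappa)=(\id_E\otimes\kappa)\circ\At(E)$, while on $H^0(M,\bwed^2 T_M)$ one has $\chi_E(\pi)=(\id_E\otimes\pi)\circ\At(E)^2$, where $\At(E)^2\in\Ext^2(E,E\otimes\bwed^2\Omega_M)$ is the antisymmetrised Yoneda square; the remaining summand $H^2(M,\cO_M)$ acts by cup product and has already been disposed of. Because $M$ is holomorphic symplectic, $H^0(M,\bwed^2 T_M)\cong H^{2,0}(M)$ is one-dimensional, spanned by the Poisson bivector $\pi_M=\sigma_M^{-1}$ dual to $\sigma_M$. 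It therefore suffices to show that the single class $\chi_{\overline{\cL}_C}(\pi_M)$ lies in the image of $H^1(M,T_M)$.

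The key step is to raise an index with $\sigma_M$. Factoring $\pi_M$ as $\bwed^2\Omega_M\hookrightarrow\Omega_M\otimes\Omega_M\xrightarrow{\sigma_M^{-1}\otimes\id}T_M\otimes\Omega_M\to\cO_M$, I can rewrite $(\id\otimes\pi_M)\circ\At(\overline{\cL}_C)^2$ as a single Atiyah contraction $(\id\otimes\kappa_0)\circ\At(\overline{\cL}_C)$ against a class $\kappa_0\in H^1(M,T_M)$ produced by pushing the $\sigma_M$-raised Atiyah class through the symplectic pairing. Here the Lagrangian condition is what makes the reduction work: the identification $N_{L/M}\cong\Omega_L$ (compare Remark \ref{Rem:sympdual} and Lemma \ref{lem:symplecticity}), equivalently the fact that $\sigma_M$ interchanges $T_L$ with the conormal $N^\vee_{L/M}$ and annihilates $\bwed^2 N_{L/M}$, forces the normal–normal part of $\At(\overline{\cL}_C)^2$ to drop out of the contraction by $\pi_M$. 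What survives is exactly the mixed tangent–normal contribution, which is precisely what a genuine $H^1(M,T_M)$-contraction produces, so $\chi_{\overline{\cL}_C}(\pi_M)=\chi_{\overline{\cL}_C}(\kappa_0)$ and the asserted containment follows.

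The main obstacle is carrying out this index-raising across the singular locus $K^{*}=P'\cap L$: because $Z$ is only a normal crossings Lagrangian, neither $\At(\overline{\cL}_C)$ nor the bundles $N_{Z/M}$ and $\Omega_Z$ make sense globally on $Z$. I would control this by working with the defining sequence relating $\overline{\cL}_C$ to $\cL_C(-K^{*})$ and $\cO_{P'}$, exactly as in the proof of Proposition \ref{prop:ExtExactSequence} and Corollary \ref{cor:extComputation}, computing the Atiyah contractions on the smooth Lagrangian pieces $P'$ and $L\cong\Sym^2 C$ separately and checking compatibility along $K^{*}$ through the restriction and residue maps already built into Theorem \ref{Thm:spectralSequence}. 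Since all the relevant $\Ext$-groups are concentrated on $L$ and on its intersection with $P'$, the computation ultimately reduces to $\Sym^2 C$, where $N_{L/M}\cong\Omega_L$ renders the symplectic index-raising transparent.
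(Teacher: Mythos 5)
Your proposal takes a genuinely different route from the paper, but it has a gap at its central step. The paper never touches the Atiyah class: it transports the problem through the derived equivalence $\Psi' = (-\otimes\cO(-3f))\circ\Psi$, which sends $\overline{\cL}_C$ to the vector bundle $F_0$. Using the injection $\mu : HH^2(M)\to\widetilde{H}(M,\C)$ of Markman, which is compatible with the actions induced by equivalences, one computes (as in Proposition \ref{prop:MukaiVector}) that $\Psi'^{HH}(\sigma^{\vee})$ lands in $H^1(M,T_M)\oplus H^2(M,\cO_M)$; since $F_0$ is hyperholomorphic it deforms along every commutative deformation, so $\chi_{F_0}$ vanishes on $H^1(M,T_M)$, and transporting back identifies $\chi_{\overline{\cL}_C}(\C\sigma^{\vee})$ with $\chi_{\overline{\cL}_C}(\C f)$, where $f$ is viewed in $H^1(M,T_M)$ via $\Omega^1_M\cong T_M$. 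The existence of the hyperholomorphic bundle on the other side of the equivalence is the essential input, and your approach has no substitute for it.

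The gap in your argument is the claimed rewriting $(\id\otimes\pi_M)\circ\At(\overline{\cL}_C)^2 = (\id\otimes\kappa_0)\circ\At(\overline{\cL}_C)$ with $\kappa_0\in H^1(M,T_M)$. Raising one index of the Atiyah class by $\sigma_M^{-1}$ produces a class in $\Ext^1(\overline{\cL}_C,\overline{\cL}_C\otimes T_M)$, \emph{not} a class of the form $\id_{\overline{\cL}_C}\otimes\kappa_0$ with $\kappa_0\in H^1(M,T_M)$; the resulting expression is still quadratic in $\At(\overline{\cL}_C)$, so there is no formal reason it equals a linear contraction against a global Hochschild class, and asserting that ``the mixed tangent--normal contribution is precisely what an $H^1(M,T_M)$-contraction produces'' is exactly the statement of the lemma, not a proof of it. The statement is in fact sharp: by Proposition \ref{prop:obstructionMapCupProduct} the image of $H^1(M,T_M)$ inside $\Ext^2(\overline{\cL}_C,\overline{\cL}_C)\cong\bwed^2 H^1(C,\C)$ is the single line spanned by the class representing the cup product, so your argument must show that the bivector contraction lands in that specific line -- a soft ``normal--normal terms drop out'' heuristic cannot see this. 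If you want a direct Atiyah-class proof you would have to actually compute the bivector contraction of $\At^2$ on $L\cong\Sym^2 C$ (second fundamental form terms included) and match it against the cup-product class; the paper's equivalence trick is precisely what avoids that computation. Your plan for handling the normal crossings locus $K^*$ via the sequences of Proposition \ref{prop:ExtExactSequence} is sensible bookkeeping, but it addresses a secondary difficulty, not the main one.
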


\begin{proof}
The idea is to compare the obstruction map of $\overline{\cL}$ with the obstruction map of $F_0 = F \otimes \cO_M(-3f)$. 
By construction, the equivalence
\[
\Theta' = (- \otimes \cO(-3f)) \circ \Theta,
\]
maps $\overline{\cL}$ to $F_0$. 
There is an injective homomorphism
\[
\mu : HH^2(M) \to \widetilde{H}(X,\C), \quad \eta \mapsto m_{\eta}(\sigma), 
\]
where $m_{\eta}$ denotes the action of $\eta \in HH^2(M)$ on $\widetilde{H}(X,\C)$, see \cite[Section 6]{markman21}.
This action is induced by the action of the LLV algebra via \cite[Theorem A]{taelman21}.
If $\Phi$ is any equivalence, the map $\Phi^{\widetilde{H}}$ is a Hodge isometry compatibile with the action of the LLV algebra.
Hence $\mu$ intertwines the actions of $\Phi$, that is there is a commutative diagram
% https://q.uiver.app/?q=WzAsNCxbMCwwLCJISF4yKE0pIl0sWzEsMCwiXFx3aWRldGlsZGV7SH0oTSxcXEMpIl0sWzAsMSwiSEheMihNKSJdLFsxLDEsIlxcd2lkZXRpbGRle0h9KE0sXFxDKSJdLFswLDIsIlxcUGhpXntISH0iXSxbMSwzLCJcXFBoaV57XFx3aWRldGlsZGV7SH19Il0sWzAsMSwiXFxtdSJdLFsyLDMsIlxcbXUiXV0=
\[\begin{tikzcd}
	{HH^2(M)} & {\widetilde{H}(M,\C)} \\
	{HH^2(M)} & {\widetilde{H}(M,\C)}
	\arrow["{\Phi^{HH}}", from=1-1, to=2-1]
	\arrow["{\Phi^{\widetilde{H}}}", from=1-2, to=2-2]
	\arrow["\mu", from=1-1, to=1-2]
	\arrow["\mu", from=2-1, to=2-2]
\end{tikzcd}\]
A direct computation as in the proof of Proposition \ref{prop:MukaiVector} shows
\begin{align*}
&\Theta'^{HH} (\sigma^{\vee}) \in H^1(M,T_M) \oplus H^2(M,\cO_M) \text{ and,}\\
&\Theta'^{HH} (f) \in H^2(M,\cO_M).
\end{align*}
where $\sigma^{\vee}$ generates $H^0(M,\bwed^2T_M)$. 
The vector bundle $F_0$ deforms along every commutative deformation, so
\[
\chi_{F_0}|_{H^1(M,T_M)} \equiv 0, 
\]
which implies
\[
\chi_{\overline{\cL}}(\C \sigma^{\vee}) = \chi_{F_0}(\C \overline{\sigma}) = \chi_{\overline{\cL}}(\C f),
\]
where we are identifying $f \in H^1(M,\Omega^1_M)$ with its image in $H^1(M,T_M)$ via the isomorphism $\Omega^1_M \cong T_M$. 
\end{proof}

It is only left to compute the restriction of the obstruction map on $H^1(M,T_M)$. 
Recall that an element $\eta \in HH^2(M)$ represents a natural transformation $\eta: \id_M \to [2]$, and the obstruction map is the evaluation at an object. 
The naturality of $\eta$ provides a commutative triangle
% https://q.uiver.app/?q=WzAsMyxbMCwwLCJISF4yKFgpIl0sWzAsMSwiXFxFeHReMihcXGNPX0wsXFxjT19MKSJdLFsxLDEsIlxcRXh0XjIoXFxjT19aLFxcY09fWikiXSxbMCwxLCJcXGV0YV97XFxjT19MfSIsMl0sWzAsMiwiXFxldGFfe1xcY09fWn0iXSxbMSwyXV0=
\[\begin{tikzcd}
	{HH^2(M)} \\
	{\Ext^2(\cL,\cL)} & {\Ext^2(\overline{\cL},\overline{\cL})}
	\arrow["{\chi_{\cL}}"', from=1-1, to=2-1]
	\arrow["{\chi_{\overline{\cL}}}", from=1-1, to=2-2]
	\arrow[from=2-1, to=2-2]
\end{tikzcd}\]
where the horizontal map is the cokernel morphism of Lemma \ref{lem:extComputation}. 

\begin{Thm}\label{thm:symplecticForm}
The image of the obstruction map for $\overline{\cL}$ is one dimensional.
Under the isomorphism 
\[
\Ext^2(\overline{\cL},\overline{\cL}) \cong \bigwedge\nolimits^2 H^1(C,\C)
\]
it is spanned by the class representing the Poincar\'e pairing. 
In particular, $\theta$ maps the symplectic form to $\alpha_{\eta}$. 
\end{Thm}

\begin{proof}
By Lemma \ref{lem:obstructionNonCommutative} it suffices to compute the image of the restriction map
\[
H^2(M,\C) \to H^2(L,\C)
\]
as explained \cite[Remark 3.10]{markman21}, and map it into the quotient $H^2(L,\C)/H^0(C,\C).$ 
The restriction can be computed on the other birational model, that is
\[
H^2(S^{[2]},\C) \to H^2(\Sym^2C,\C) \cong H^2(C,\C) \oplus \bwed^2 H^1(C,\C). 
\]
The K\"unneth formula implies that the first summand in 
\[
H^2(S^{[2]},\C) \cong H^2(S,\C) \oplus \C\delta.
\]
maps to $H^2(C,\C)$.
By definition, the class $\delta$ maps to a multiple of the class of the diagonal $\Delta_C \subset \Sym^2 C$. 
To conclude note that, for every $\alpha,\beta \in H^1(C,\C)$ we have
\[
\int_{\Delta_C}{(\pi_1^*\alpha \wedge \pi^*_2\beta - \pi_1^*\beta \wedge \pi^*_2\alpha)|_{\Delta_C}} = 2\int_C{\alpha \wedge \beta}.
\]
Hence, the image of $[\Delta_C]$ in $\bigwedge\nolimits^2 H^1(C,\C)$ represents the Poincar\'{e} pairing.
Since $\alpha_{\eta}$ is defined by pairing two classes in $\Ext^1(\overline{\cL},\overline{\cL})$ with the image of the obstruction map, under the identification with $H^1(C,\C)$ induced by $\theta$, it corresponds to 
\[
\alpha_{\eta}(-,-) = \int_C{ - \wedge -}
\]
which is also the symplectic form on $\widetilde{M}_H(0,2H,-4)$. We conclude that $\theta$ preserves the 2-forms and that $\alpha_{\eta}$ is symplectic on the image. 
\end{proof}

\noindent
\textbf{Acknowledgments.}
This paper is part of my PhD thesis.
I want to thank my advisors Emanuele Macrì and Antonio Rapagnetta for their interest and support. 
I also want to thank Thorsten Beckmann, Kieran O'Grady and Eyal Markman for several fruitful and instructive conversations. 
I am partially supported by the European Research Council (ERC) under the European Union’s Horizon 2020 research and innovation programme (ERC-2020-SyG-854361-HyperK).

\end{document}